\numberwithin{equation}{section}
\newtheorem{theorem}{Theorem}[section]
\newtheorem{lemma}[theorem]{Lemma}
\newtheorem{proposition}[theorem]{Proposition}
\newtheorem{corollary}[theorem]{Corollary}
\theoremstyle{definition}
\newtheorem{definition}[theorem]{Definition}
\newtheorem{remark}[theorem]{Remark}
\newtheorem{example}[theorem]{Example}
\def\deg{\textup{deg}}
\def\L{\mathcal{L}}
\def\H{\mathbb{H}}
\newcommand{\bH}{\mathcal{H}}
\newcommand{\C}{\mathcal{C}}
\newcommand{\p}{\mathbf{p}}
\newcommand{\Z}{\mathbb{Z}}
\newcommand{\F}{\mathbb{F}}
\newcommand{\A}{\mathfrak{A}}
\newcommand{\mfs}{\mathfrak{s}}
\newcommand{\mft}{\mathfrak{t}}
\newcommand{\Sp}{S^{3}_{p_{1}, \cdots, p_{n}}(\mathcal{L})}
\newcommand{\Co}{\#_{i=1}^{n} B_{p_{i}}}
\newcommand{\os}{Ozsv\'ath and Szab\'o}
\newcommand{\spinc}{\text{Spin}^c}
\newcommand{\relspinc}{\underline{\mathrm{Spin}^c}}
\newcommand{\sq}{\square}
\newcommand{\CW}{\mathrm{CW}}
\begin{document}

\title[Surgery on links and the $d$-invariant]{Surgery on links of linking number zero and the Heegaard Floer $d$-invariant}

\author{Eugene\ \textsc{Gorsky}}
\address{Department of Mathematics\\University of California, Davis \\ One Shields Avenue  \\ Davis, CA 95616 \\ USA}

\address{International Laboratory of Representation Theory\\ and Mathematical Physics\\ National Research University Higher School of Economics\\ Usacheva 6, Moscow, Russia}

\email{egorskiy@math.ucdavis.edu}

\author{Beibei \textsc{Liu}}
\address{Max Planck Institute for Mathematics\\Vivatsgasse 7, 53111 Bonn, Germany}
\email{bbliumath@gmail.com}

\author{Allison H.\ \textsc{Moore}}
\address{Department of Mathematics \& Applied Mathematics\\Virginia Commonwealth University \\ 1015 Floyd Avenue, Box 842014\\ Richmond, VA 23284-2014 \\ USA}
\email{moorea14@vcu.edu}

\subjclass[2010]{Primary 57M25, 57M27, 57R58} 

\keywords{link surgery, Heegaard Floer homology, d-invariant, h-function, concordance, four-genus}

\begin{abstract}
We study Heegaard Floer homology and various related invariants (such as the $h$-function) for two-component L--space links with linking number zero.  For such links, we explicitly describe the relationship between the $h$-function, the Sato-Levine invariant and the Casson invariant. We give a formula for the Heegaard Floer $d$-invariants of integral surgeries on two-component L--space links of linking number zero in terms of the $h$-function, generalizing a formula of Ni and Wu.
As a consequence, for such links with unknotted components, we characterize L--space surgery slopes  in terms of the $\nu^{+}$-invariants of the knots obtained from blowing down the 
components. 

We give a proof of a skein inequality for the $d$-invariants of $+1$ surgeries along linking number zero links that differ by a crossing change.
We also describe bounds on the smooth four-genus of links in terms of the $h$-function, expanding on previous work of the second author, and use these bounds to calculate the four-genus in several examples of links.
\end{abstract}

\maketitle

\section{Introduction}
\label{sec:intro}

Given a closed, oriented three-manifold $Y$ equipped with a $\spinc$ structure, the Heegaard Floer homology of $Y$ is an extensive package of three-manifold invariants defined by \os~\cite{OS:three}. One particularly useful piece of this package is the $d$-invariant, or \emph{correction term}. For a rational homology sphere $Y$ with $\spinc$ structure $\mft$, the $d$-invariant $d(Y, \mft)$ takes the form of a rational number defined to be the maximal degree of any non-torsion class in the module $HF^-(Y, \mft)$. For more general manifolds, the $d$-invariant is similarly defined (see section \ref{subsec:standard}). The $d$-invariants are known to agree with the analogous invariants in monopole Floer homology (see Remark \ref{monopole}). The terminology `correction term' reflects that the Euler characteristic of the reduced version of Heegaard Floer homology is equivalent to the Casson invariant, once it is corrected by the $d$-invariant \cite{OS:Absolutely}. 
The $d$-invariants have many important applications, for example, to concordance \cite{ManOwens, Peters}, Dehn surgery \cite{NiWu, Doig} and the Heegaard Floer theoretic proofs of Donaldson's theorem and the Thom conjecture \cite{OS:Absolutely}, to name a few. 

From the viewpoint of Heegaard Floer homology, \emph{L--spaces} are the simplest three manifolds.
A rational homology sphere is an L--space if the order of its first singular homology agrees with the free rank of its Heegaard Floer homology. A recent conjecture of Boyer, Gordon and Watson \cite{BGW,HRRW,HRW,Sarah2} describes L--spaces in terms of the fundamental group, and it has been confirmed for many families of 3-manifolds.
A link is an  L--space link if all sufficiently large surgeries on all of its components are L--spaces. 

Given a knot or link in a 3-manifold, one can define its Heegaard Floer homology as well. 
The subcomplexes of the link Floer complex are closely related to the Heegaard Floer complexes of various Dehn surgeries along the link. In the case of knots in the three-sphere, this relationship is well understood by now and, in particular, the following questions have clear and very explicit answers:
\begin{itemize}
\item The formulation of a ``mapping cone'' complex representing the Heegaard Floer complex of an arbitrary rational surgery \cite{OS:Rational}; 
\item  An explicit formula for the $d$-invariants of rational surgeries \cite{NiWu}; 
\item  A classification of surgery slopes giving L--spaces \cite[Proposition 9.6]{OS:Rational}.
\end{itemize}

In this article, we expand the existing Heegaard Floer ``infrastructure" for knots in the three-sphere to the case of links. The work of Manolescu and Ozsv\'ath in \cite{ManOzs} generalizes the ``mapping cone" formula to arbitrary links. For two-component L--space links, their description was made more explicit by Y. Liu \cite{LiuY2} and can be used for computer computations. Both \cite{ManOzs} and \cite{LiuY2} start from an infinitely generated complex and then use a delicate truncation procedure to reduce it to a finitely generated, but rather complicated complex. On the one hand, it is possible to use the work of \cite{ManOzs,LiuY2} to compute the $d$-invariant for a single surgery on a link or to determine if it yields an L--space. On the other hand, to the best of authors' knowledge, it is extremely hard to write a general formula for  $d$-invariants of integral surgeries along links, although such formulas exist for knots in $S^3$ \cite{NiWu} and knots in $L(3, 1)$ \cite{LMV}. 

In general, the characterization of integral or rational L--space surgery slopes for multi-component links is not well-understood. 
The first author and N\'emethi have shown that the set of L--space surgery slopes is bounded from below for most two-component algebraic links and determined this set for integral surgery along torus links \cite{GN:set, GN:plane}. 
Recently, Sarah Rasmussen \cite{Sarah} has shown that certain torus links, satellites by algebraic links, and iterated satellites by torus links have fractal-like regions of rational  L--space surgery slopes.

Nevertheless, in this article we show that the situation simplifies dramatically if the linking number between the link components vanishes. We show that both the surgery formula of \cite{ManOzs} and the truncation procedure lead to explicit complexes similar to the knot case. We illustrate the truncated complexes by pictures that are easy to analyze. 
They are closely related to the lattice homology introduced by N\'emethi \cite{Nemethi,GN}, and best described in terms of the $H$-function $H_{\L}(\bm{s})$, which is a link  invariant  defined over some lattice $\H(\L)$ (see Definition \ref{Hfunction}, see also  \cite{GN}). Note that for a knot $K$, our $H$-function $H_{K}(s)$ agrees with the invariant $V^+_{s}$ of Ni and Wu \cite{NiWu} (see also Rasmussen's \emph{local $h$-invariant} \cite{Ras:Thesis}). For 2-component links $\L$ with vanishing linking number, we define:
\[
	h_{\L}(\bm{s})=H_{\L}(\bm{s})-H_{O}(\bm{s})
\]
where $\bm{s}\in \Z^{2}$ and $H_{O}(\bm{s})$ is the $H$-function of the 2-component unlink. 

Let $S^3_\p(\L)$ denote the $\p=(p_1, \dots, p_n)$ framed integral surgery along an oriented $n$-component link $\L$ in the three-sphere with vanishing pairwise linking number where $p_{i}\neq 0$ for any $i$. We will identify the set of $\spinc$-structures on $S^3_\p(\L)$  with $\Z_{p_1}\times \ldots \times \Z_{p_n}$.
The following result generalizes \cite[Proposition 1.6]{NiWu} and \cite[Theorem 6.1]{OwensStrle}.

\begin{theorem}
\label{thm:generalizedniwu}
The $d$-invariants of integral surgeries on a two-component L--space link with linking number zero can be computed as follows:

(a) If $p_1,p_2<0$ then 
\[
d(S^3_{\p}(\L),(i_1,i_2))= d(L(p_1, 1), i_1) + d(L(p_2, 1), i_2).
\]
(b) If $p_1,p_2>0$ then 
\[
d(S^3_{\p}(\L),(i_1,i_2))=d(L(p_1, 1), i_1) + d(L(p_2, 1), i_2)-2\max\{ h(s_{\pm  \pm}(i_1, i_2)) \},
\]
where $s_{\pm \pm}(i_1,i_2)=(s_{\pm}^{(1)},s_{\pm}^{(2)})$ are four lattice points in $\spinc$-structure $(i_1, i_2)$ which are closest to the origin in each quadrant (see section \ref{d-grading}).

(c) If $p_1>0$ and $p_2<0$ then 
\[
d(S^3_{\p}(\L),(i_1,i_2))=d(S^{3}_{p_1}(L_1), i_1)+ d(L(p_2, 1), i_2).
\]
\end{theorem}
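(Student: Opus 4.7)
The plan is to apply the Manolescu--Ozsv\'ath link surgery mapping cone formula, specialized to the present setting of a two-component L--space link $\L$ with vanishing linking number. Under these hypotheses (established in the preceding sections of the paper), each complex $A^-_{\bm{s}}(\L)$ is a shifted tower $\T^-[-2H_\L(\bm{s})]$, and the one-component sublink complexes are towers shifted by the knot $H$-functions $H_{L_i}$. The mapping cone complex $\mathcal{C}^-_\p(\L,(i_1,i_2))$ therefore reduces, after truncation, to a finite square diagram of towers indexed by a rectangle in $\Z^2$. The $d$-invariant $d(S^3_\p(\L),(i_1,i_2))$ is then read off as the top grading of the surviving $\T^-$-summand in the homology of this finite complex.

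For case (b) with $p_1,p_2>0$ I would follow the Ni--Wu strategy. After truncation, the top generator of the cone homology in $\spinc$-structure $(i_1,i_2)$ lies in an $A^-$-piece indexed by a lattice point $\bm{s}$; comparing with the unlink mapping cone (in which $H_\L$ is replaced by $H_O$) splits the contribution into the lens-space term $d(L(p_1,1),i_1)+d(L(p_2,1),i_2)$ plus a residual of $-2h_\L(\bm{s})$. Monotonicity of $h_\L$ in each coordinate (proved earlier in the paper) implies that within each of the four open quadrants relative to the origin the maximizer of $h_\L$ over the spinc class $(i_1,i_2)$ is the lattice point closest to the origin, namely $s_{\pm\pm}(i_1,i_2)$. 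The surviving tower is controlled by the largest of these four $h$-values, yielding the stated correction $-2\max\{h_\L(s_{\pm\pm})\}$.

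Case (a) with $p_1,p_2<0$ is dual: the structure maps of the mapping cone point in the opposite directions, so the surviving tower comes instead from the $B^-$-corner of the diagram. This corner carries only the lens-space grading shifts and no $H$-contribution, accounting for the absence of an $h$-term in the formula. Case (c) is a hybrid: the complex decomposes, up to an absolute grading shift, into an $A^-/B^-$-mapping cone in the $p_1>0$ direction (which is precisely the Ni--Wu cone for the knot $L_1$ and hence yields $d(S^3_{p_1}(L_1),i_1)$) tensored with a ``negative surgery'' piece in the $p_2<0$ direction (yielding $d(L(p_2,1),i_2)$); the key input enabling this tensor decomposition is again $\mathrm{lk}(L_1,L_2)=0$. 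The main technical obstacle throughout is the grading bookkeeping at the corners of the truncated mapping cone box: one must verify in case (b) that the top tower generator is attained at one of the four quadrant-nearest lattice points and not at an interior point of the truncation rectangle, and in cases (a) and (c) that no stray $A^-$-piece survives to spoil the lens-space grading along the negatively-framed directions.
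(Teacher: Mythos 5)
Your cases (a) and (b) follow essentially the same route as the paper: reduce to the perturbed, truncated Manolescu--Ozsv\'ath complex of towers, identify the link-independent part of the grading by comparison with the unlink, and use monotonicity to locate the extremal generator at one of the four points $s_{\pm\pm}$. One bookkeeping caveat in (b): the clean identity $\deg z_{\emptyset}(\bm{s})=\deg z_{1,2}(s_{++})-2h(\bm{s})$ holds only at the four distinguished points (Lemma \ref{grafor}), not at an arbitrary lattice point of the $\spinc$ class, because the shift $\deg z_{1,2}(\bm{s})$ itself varies over the class. So you cannot simply say ``the residual at $\bm{s}$ is $-2h(\bm{s})$ and then maximize $h$''; the paper instead proves that $\deg z_{\emptyset}$ decreases towards the origin and is therefore minimized at one of the $s_{\pm\pm}$, and only then converts to $h$-values. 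Your monotonicity input is the right one, but the argument should be run on $\deg z_{\emptyset}$ rather than on $h$.

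The genuine gap is in case (c). The claimed tensor decomposition of the surgery complex into a Ni--Wu cone for $L_1$ tensored with a negative-surgery piece for $L_2$ is false for a non-split link: the perturbed differential out of $\A^{00}_{\bm{s}}$ in the $L_1$-direction is multiplication by $U^{H(\pm s_1,\pm s_2)-H_2(\pm s_2)}$, whereas the differential out of $\A^{01}_{\bm{s}}$ is $U^{H_1(\pm s_1)}$; these exponents agree for all $\bm{s}$ only when $H(\bm{s})=H_1(s_1)+H_2(s_2)$, i.e.\ when $h'\equiv 0$, which fails in general (e.g.\ for the Whitehead link $h(0,0)=1$). So the complex is not a tensor product, and the factorization of the answer cannot be read off that way. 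The paper's argument for the mixed-sign case is genuinely two-dimensional: by Theorem \ref{square erase}(c) the $d$-invariant is $\max_{c}\min_{\sq\in c}\deg(\sq)+1$ over simple lattice paths $c$ joining the two erased sides of the truncation rectangle; one then shows the optimal path is the horizontal path at height $t_0=s_{+}^{(2)}+p_2$, along which the minimal degree involves only $\max h_1(s_{\pm}^{(1)})$, and finally invokes the Ni--Wu formula to identify that quantity with $d(S^3_{p_1}(L_1),i_1)-\phi(p_1,i_1)$. Some argument of this kind (locating the surviving homology class and bounding every competing path) is needed to close case (c); the tensor-product shortcut does not.
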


\begin{remark}
If $\L$ is a link with vanishing linking number then all $d$-invariants of all surgeries are concordance invariants. 
\end{remark}

When $p_1=p_2=1$ then $S^3_\p(\L)$ is a homology sphere, and so $i_1,i_2=0$. Moreover $d(L(p_1,1),i_1)=d(L(p_2, 1), i_2)=0$ and $s_{\pm \pm}(0, 0)=(0,0)$, hence
\[
	d(S^{3}_{1, 1}(\L))=-2h(0, 0).
\]
This is analogous to the more familiar equality for knots, $d(S^3_1(K)) = -2V^+_0(K)$, where $V_0(K)$ is the non-negative integer-valued invariant of \cite{NiWu}, originally introduced by Rasmussen as the $h$-invariant $h_{0}(K)$ \cite{Ras:Thesis}.

 As another special case, we consider nontrivial linking number zero L--space links $\L=L_1\cup L_2$ with unknotted components. 
Let $L'_{i}$ $(i=1, 2)$ denote the knot obtained by blowing down the other unknotted component, i.e. performing a negative Rolfsen twist as in Figure \ref{rolfsen}.  
Then the $h$-function and $\nu^{+}$-invariant \cite[Definition 2.1]{HW} of $L'_{i}$ can be obtained from the $h$-function of $\L$.

\begin{proposition}
\label{prop:tau}
Let $\L=L_{1}\cup L_{2}$ be a nontrivial linking number zero L--space link with unknotted components, and let 
$L'_1$ and $L'_2$ be the knots obtained from $\L$ by applying a negative Rolfsen twist to $L_2$ and $L_1$ respectively.
Then $\nu^{+}(L'_i)=b_i+1$ for $i=1, 2$.
\end{proposition}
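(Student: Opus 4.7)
The plan is to compute $\nu^{+}(L'_i)$ by reading it off from the $d$-invariants of large positive integer surgeries on $L'_i$, computed in two different ways: first via the classical Ni-Wu formula applied to the knot $L'_i \subset S^3$, and second via Theorem \ref{thm:generalizedniwu}(b) applied to $\L$. I carry out the argument for $i=1$; the case $i=2$ is symmetric.

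The first step is to realize surgeries on $L'_1$ as two-component surgeries on $\L$. Since $L_2$ is unknotted, a negative Rolfsen twist along $L_2$ amounts to $(+1)$-surgery on $L_2$, which preserves $S^{3}$ and converts $L_{1}$ into $L'_{1}$. Consequently, for every $n>0$ we obtain a canonical identification
\[
S^{3}_{n}(L'_{1}) \;\cong\; S^{3}_{n,1}(\L),
\]
together with a natural bijection of $\spinc$-structures. Both framings are positive, so we are in the setting of Theorem \ref{thm:generalizedniwu}(b).

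Next, I compare two expressions for the resulting $d$-invariants. Since $L_{1}$ is unknotted and $L(1,1)=S^{3}$, Theorem \ref{thm:generalizedniwu}(b) simplifies to
\[
d\bigl(S^{3}_{n,1}(\L),(i_{1},0)\bigr) \;=\; d\bigl(L(n,1),i_{1}\bigr)-2\max\bigl\{h_{\L}(s_{\pm\pm}(i_{1},0))\bigr\},
\]
while the Ni-Wu formula applied to the knot $L'_{1}$ gives, for $n$ sufficiently large,
\[
d\bigl(S^{3}_{n}(L'_{1}),i_{1}\bigr) \;=\; d\bigl(L(n,1),i_{1}\bigr)-2V^{+}_{s(i_{1})}(L'_{1}),
\]
where $s(i_{1})$ is the standard lift of $i_{1}\in \Z/n\Z$. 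Equating the two yields $V^{+}_{s(i_{1})}(L'_{1}) = \max\{h_{\L}(s_{\pm\pm}(i_{1},0))\}$. By definition $\nu^{+}(L'_{1})$ is the smallest $s\ge 0$ at which $V^{+}_{s}(L'_{1})$ vanishes, and by the definition of $b_{1}$ as the largest coordinate along the first axis for which the relevant maximum of $h_{\L}(\cdot,0)$ is strictly positive, this smallest $s$ is exactly $b_{1}+1$, as claimed.

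The main technical obstacle is the bookkeeping in the first two steps: one must verify that the canonical $\spinc$-structure identification induced by the Rolfsen twist matches the lattice labeling used in Theorem \ref{thm:generalizedniwu}, and that the maximum over the four corners $s_{\pm\pm}(i_{1},0)$ correctly reduces to a one-dimensional slice of $h_{\L}$ along $\{s_{2}=0\}$. The latter should follow from monotonicity of $h_{\L}$ in the second coordinate, which is forced by $L_{2}$ being unknotted; this reduction is precisely what allows the two-dimensional link data to produce the one-dimensional knot invariant $\nu^{+}(L'_{1})$.
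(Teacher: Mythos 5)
Your proposal is correct and is essentially the paper's own argument (Theorem \ref{H for slamdunk} together with Corollary \ref{tau invariant}): blow down the unknotted component via a Rolfsen twist, apply Theorem \ref{thm:generalizedniwu}(b) with one surgery coefficient equal to $1$, compare with the knot-level Ni--Wu formula to identify $V^+_s(L'_i)$ with $h_{\L}$ restricted to the corresponding coordinate axis, and read off $\nu^+$. The only cosmetic difference is in your closing remark: the reduction to the axis is automatic, since with $p_2=1$ the points $s_{\pm\pm}(i_1,0)$ already lie on $\{s_2=0\}$, and the remaining two-point maximum collapses by Lemma \ref{lem: h increases}, not because of any monotonicity special to $L_2$ being unknotted.
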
 
Here, $b_1$ and $b_2$ are nonnegative numbers defined by $b_1=\max\{s_1: h(s_1,0)>0\}$ and $b_2=\max\{s_2: h(0,s_2)>0\}$. 
This allows us to determine, in terms of the $\nu^{+}$ invariants of $L'_1$ and $L'_2$, how large is `large enough' in order to guarantee that the surgery manifold is an L--space. 
\begin{theorem}
\label{thm:taubound}
Assume that $\L=L_1\cup L_2$ is a nontrivial L--space link with unknotted components and linking number zero. Then $S^3_{p_1,p_2}(\L)$ is an L--space if and only if $p_1>2\nu^{+}(L_1')-2$ and $p_2 > 2 \nu^{+}(L_2')-2$.
\end{theorem}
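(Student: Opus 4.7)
The plan is to combine Theorem \ref{thm:generalizedniwu}(b), Proposition \ref{prop:tau}, and an L--space criterion expressed via the $h$-function. The first step is to establish the criterion: for a two-component L--space link $\L$ of linking number zero with $p_1, p_2 > 0$, the surgery $S^3_{\p}(\L)$ is an L--space if and only if $h_{\L}(s_{\pm\pm}(i_1, i_2)) = 0$ for every $\spinc$-structure $(i_1, i_2) \in \Z_{p_1} \times \Z_{p_2}$ and every sign choice. The forward direction is immediate from Theorem \ref{thm:generalizedniwu}(b): an L--space has $d$-invariants matching those of $L(p_1, 1) \# L(p_2, 1)$, and since $h \geq 0$ this forces $\max h(s_{\pm\pm})$ to vanish in every $\spinc$-structure. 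The reverse direction, which is the heart of the argument, will follow from the explicit form of the truncated Manolescu--Ozsv\'ath mapping cone for linking number zero L--space links developed earlier in the paper; vanishing of the corner $h$-values forces the relevant structural maps in the mapping cone to be surjective, killing $HF^{\mathrm{red}}$.

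Second, I would invoke the structural properties of $h$ for such links used in proving Proposition \ref{prop:tau}: nonnegativity, conjugation symmetry $h(s_1, s_2) = h(-s_1, -s_2)$, an appropriate monotonicity, and the axial formulas $h(s_1, 0) > 0 \iff |s_1| \leq b_1$ and $h(0, s_2) > 0 \iff |s_2| \leq b_2$. These constrain the support of $h$ and underlie the lattice computation in the next step.

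Third, I would combinatorially check that the corner-vanishing condition is equivalent to $p_1 > 2b_1$ and $p_2 > 2b_2$. For sufficiency, when $p_i \geq 2b_i + 1$ for both $i$, one verifies that every corner $s_{\pm\pm}(i_1, i_2)$ escapes the support of $h$; this requires the fuller structural information about $h$ (not merely the axial formulas) to handle corners like $s_{--}$ that might otherwise sit close to the origin. For necessity, if say $p_1 \leq 2b_1$, then the $\spinc$-structure $(i_1, i_2) = (\lfloor p_1/2 \rfloor, 0)$ has a corner of the form $(s_1, 0)$ with $|s_1| \leq b_1$, at which $h$ is strictly positive by the definition of $b_1$ and monotonicity along the axis, so the criterion fails. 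Finally, Proposition \ref{prop:tau} converts $p_i > 2b_i$ into $p_i > 2\nu^{+}(L_i') - 2$, completing the proof.

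The main obstacle is the reverse direction of the L--space criterion: in general, matching $d$-invariants is necessary but not sufficient for the L--space property. The sufficiency here must exploit the specific rigidity of the truncated mapping cone for linking number zero 2-component L--space links established in the preceding sections of the paper, so that the $d$-invariant condition upgrades to the full L--space condition.
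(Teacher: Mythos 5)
Your step 1 contains a fatal error, and it propagates through the rest of the argument. The proposed criterion --- that $S^3_{\p}(\L)$ is an L--space if and only if $h_{\L}(s_{\pm\pm}(i_1,i_2))=0$ for every $\spinc$-structure and every sign choice --- is false, and in fact can never hold for a nontrivial link: for the $\spinc$-structure $(0,0)$ all four corners $s_{\pm\pm}(0,0)$ coincide with the origin, and $h(0,0)>0$ for every nontrivial L--space link of linking number zero (the paper shows $h(0,0)=0$ forces $\L$ to be the unlink). So your criterion would assert that no surgery on a nontrivial such link is ever an L--space, contradicting the theorem you are trying to prove. Concretely, for the Whitehead link one has $b_1=b_2=0$ and $h(0,0)=1$, yet $S^3_{1,1}(W)=-\Sigma(2,3,5)$ is an L--space. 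The root of the problem is your justification of the forward direction: an L--space obtained by surgery need not have the same $d$-invariants as the corresponding surgery on the unlink ($+1$-surgery on the right-handed trefoil is an L--space with $d=-2\neq 0$), so Theorem \ref{thm:generalizedniwu}(b) does not force $\max h(s_{\pm\pm})$ to vanish. The $d$-invariant is simply too coarse to detect the L--space condition, and your step 3 equivalence (corner-vanishing iff $p_i>2b_i$) fails for the same reason.

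The paper's actual argument (Theorem \ref{bounds from tau}, combined with Corollary \ref{tau invariant} to translate $p_i>2b_i$ into $p_i>2\nu^{+}(L_i')-2$) works directly inside the perturbed truncated surgery complex rather than with $d$-invariants. For sufficiency, when $p_i>2b_i$ the vanishing of $h$ outside $[-b_1,b_1]\times[-b_2,b_2]$ permits a truncation so aggressive that each $\spinc$-structure retains a single generator $\A^{00}_{\bm{s}}$, whence $HF^-\cong\F[U]$. For necessity one argues with explicit chains: since $H(0,0)>0$, the class $U^{-1}D(z_{\emptyset}(0,0))$ is a torsion cycle, and for it to bound one needs all boundary cells of the rectangle erased, forcing $p_1,p_2>0$; then, if $0<p_1\le 2b_1$, comparing the internal degrees of $z_{\emptyset}(-b_1,0)$ and $z_{\emptyset}(p_1-b_1,0)$ (both points where $h>0$) yields two incompatible inequalities. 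You would need to replace your step 1 with an argument of this kind; no criterion phrased purely in terms of $d$-invariants can close the gap.
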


\begin{remark}
This gives a characterization of the unlink since it is the only 2-component L--space link with unknotted components, vanishing linking number and arbitrarily positive and negative L--space surgeries. For a general discussion about L--space surgeries on 2-component L--space links, we refer the reader to \cite{Liu19}. 
\end{remark}

The following corollary suggests that twisting along a homologically trivial unknotted component will almost always destroy the property of being an L--space link, in the sense that it puts strong constraints on the image knot $L_2'$. It is worth comparing Corollary \ref{cor: L2prime L space} with \cite[Corollary 1.6]{BakerMotegi}, which characterizes infinite twist families of tight fibered knots. Because L--space knots are necessarily tight fibered, Baker and Motegi's result shows that at most finitely many L--space knots can be produced by twisting along a homologically trivial unknot.
\begin{corollary}
\label{cor: L2prime L space}
Assume that $\L=L_1\cup L_2$ is a nontrivial L--space link with unknotted components and linking number zero. Then $L'_2$ is an L--space knot if and only if $(1,p_2)$ surgery on $\L$ is an L--space for sufficiently large $p_2$. By Theorem \ref{thm:taubound} this is equivalent to $b_1=0$ and $\nu^{+}(L'_1)=1$.
\end{corollary}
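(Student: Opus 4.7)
My approach is to combine a Kirby-calculus identification of $(1, p_2)$-surgery on $\L$ with the standard characterization of L-space knots in terms of their large integral surgeries, and then apply Theorem~\ref{thm:taubound}.

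First I would check that $S^3_{1, p_2}(\L) \cong S^3_{p_2}(L'_2)$. Since $L_1$ is an unknot with framing $+1$ and has linking number zero with $L_2$, blowing down $L_1$ returns the ambient manifold to $S^3$ and transforms $L_2$ by a negative Rolfsen twist into $L'_2$; the vanishing linking number ensures that the Rolfsen twist framing correction $-\mathrm{lk}(L_1, L_2)^2$ is zero, so the framing $p_2$ on $L_2$ is preserved and becomes the surgery framing on $L'_2$.

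Next I would invoke the standard fact that a knot $K \subset S^3$ is an L-space knot if and only if $S^3_n(K)$ is an L-space for all sufficiently large integers $n$: the forward direction is the Ozsv\'ath--Szab\'o result that every $n$-surgery with $n \geq 2g(K) - 1$ on an L-space knot is an L-space, while the reverse direction is immediate from the definition. Applied to $K = L'_2$ together with the diffeomorphism from the previous paragraph, this yields the first equivalence in the statement.

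For the second equivalence, Theorem~\ref{thm:taubound} asserts that $S^3_{1, p_2}(\L)$ is an L-space iff $1 > 2\nu^+(L'_1) - 2$ and $p_2 > 2\nu^+(L'_2) - 2$. The second inequality is automatic once $p_2$ is taken large enough, while the first is equivalent to $\nu^+(L'_1) \leq 1$; combining this with Proposition~\ref{prop:tau}, which gives $\nu^+(L'_1) = b_1 + 1 \geq 1$, forces $\nu^+(L'_1) = 1$ and $b_1 = 0$. The converse direction is immediate from the same inequalities. The only non-automatic step in the whole argument is the Kirby reduction in the first paragraph; the rest is bookkeeping built on Theorem~\ref{thm:taubound} and Proposition~\ref{prop:tau}.
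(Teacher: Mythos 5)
Your proof is correct and follows the same route the paper intends: the Rolfsen-twist identification $S^3_{1,p_2}(\L)\cong S^3_{p_2}(L'_2)$ (which the paper sets up in Section \ref{subsec:Kirby}, with the framing unchanged because $\mathrm{lk}(L_1,L_2)=0$), the definition of an L--space knot via large surgeries, and Theorem \ref{thm:taubound} combined with $\nu^{+}(L'_1)=b_1+1\ge 1$ from Proposition \ref{prop:tau} to convert $1>2\nu^{+}(L'_1)-2$ into $b_1=0$. The only cosmetic remark is that the Ozsv\'ath--Szab\'o $2g-1$ bound is not needed for the forward direction, since ``all sufficiently large surgeries are L--spaces'' is already the definition of an L--space knot used in the paper.
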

 
In section \ref{sec:relationships} we investigate the relationship of the $h$-function for two-component links with the Sato-Levine invariant $\beta(\L)$ and the Casson invariant $\lambda(S^3_\p(\L))$, and make explicit how to express these as linear combinations of the $h$-function of sublinks of $\L$. 

\begin{proposition} Let $\L=L_1\cup L_2$ be an L--space link of linking number zero.
Let $$h'(\mathbf{s})=h(\mathbf{s})-h_{1}(s_1)-h_{2}(s_2)$$ where $h, h_1$, and $h_2$ denote the $h$-functions of $\L, L_1$, and $L_2$.  Then:
\begin{enumerate}
	\item The Sato-Levine invariant of $\L$ equals
	$\beta(\L) = -\sum_{\mathbf{s}\in\Z^2}h'(\mathbf{s}). $ 
	\item Consider surgery coefficients $p_1, p_2 = \pm1$. The Casson invariant of $(p_1,p_2)$--surgery along $\L$ equals
	\[
		\lambda(S^3_{p_1, p_2}(\L)) = p_1p_2\sum_{\mathbf{s}\in\Z^2} h'(\mathbf{s}) +p_1\sum_{s_1\in\Z} h_1(s_1) + p_2\sum_{s_2\in\Z} h_2(s_2).
	\]
\end{enumerate}
\end{proposition}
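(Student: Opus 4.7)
\emph{Proof sketch.} My plan is to reduce both claims to the classical expressions of the Sato--Levine invariant and the Casson invariant in terms of the (multi-variable) Alexander polynomial, and then to use the known dictionary between the $H$-function of an L--space link and its Alexander polynomial to rewrite those expressions in terms of $h$.

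For part~(1), the key input is the formula (due to Gorsky--N\'emethi and Y.~Liu) relating the multi-variable Alexander polynomial of a 2-component L--space link to a box second-difference of the $H$-function:
\[
\Delta_{\L}(t_1, t_2) \doteq \sum_{\mathbf{s} \in \Z^2} \bigl( H_{\L}(\mathbf{s}) - H_{\L}(\mathbf{s}-\mathbf{e}_1) - H_{\L}(\mathbf{s}-\mathbf{e}_2) + H_{\L}(\mathbf{s}-\mathbf{e}_1-\mathbf{e}_2) \bigr) \, t_1^{s_1} t_2^{s_2},
\]
with the analogous one-variable identity for each knot sublink. Subtracting the sublink contributions from $H_\L$ (and then from $H_O$ to pass to $h$) converts the box-difference into the corresponding box-difference of $h'$. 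The classical identification of Sato--Levine reads $\beta(\L) = a_3(\L)$, where $\nabla_\L(z) = \beta(\L)\,z^3 + O(z^5)$ is the Conway polynomial of a linking number zero 2-component link. This identifies $\beta(\L)$ with a specific derivative of $\Delta_\L$ at $(1,1)$. Performing Abel summation (summation by parts) in both variables collapses the telescoping box-differences and reads off $\beta(\L) = -\sum_\mathbf{s} h'(\mathbf{s})$.

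For part~(2), I would apply the Walker--Hoste surgery formula for the Casson--Walker invariant of $\pm 1$ surgery on a linking number zero 2-component link, which takes the form
\[
\lambda(S^3_{p_1, p_2}(\L)) \;=\; -\,p_1 p_2\, \beta(\L) \;+\; p_1\, a_2(L_1) \;+\; p_2\, a_2(L_2),
\]
where $a_2(L_i) = \tfrac{1}{2}\Delta''_{L_i}(1)$ is the Conway $z^2$-coefficient of the knot $L_i$. Substituting $\beta(\L) = -\sum h'$ from part~(1), together with the one-variable analogue $a_2(K) = \sum_{s\in\Z} h_K(s)$ (a direct consequence of the Alexander-polynomial-to-$H$-function dictionary for L--space knots, or equivalently of the Ni--Wu computations), produces the stated formula after collecting terms.

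The main technical obstacle I expect is careful bookkeeping of sign and normalization conventions across the chain of identifications: Conway versus Alexander polynomial, the Ozsv\'ath--Szab\'o Alexander grading shifts for link Floer homology, the reference correction $H_\L - H_O$ defining $h$, and the $\pm$ in the Walker formula depending on the signs of $p_1, p_2$. A secondary but necessary step is to verify that the sums $\sum h'$ and $\sum h_i$ are finite; this follows from the L--space link hypothesis together with the stabilization property $h_\L(s_1, s_2) = h_1(s_1)$ for $s_2 \gg 0$ (and symmetrically), which forces $h'$ to have compact support in $\Z^2$.
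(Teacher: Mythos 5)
Your part (1) follows essentially the same route as the paper: identify $\beta(\L)$ with a coefficient of the Conway/Alexander polynomial (the paper uses the Torres conditions to factor out $(t_1-1)(t_2-1)$ and Sturm's identity $a_2=-\beta(\L)$), convert the Alexander coefficients into the box second-difference of the $H$-function via inclusion--exclusion, and telescope; your ``Abel summation'' is exactly the paper's observation that the coefficients of $\tilde{\Delta}'_\L$ equal $-h'(\mathbf{s})$ because both sides have the same box-difference and vanish at infinity. Do note that with the paper's normalization $\nabla_\L(z)=z(a_0+a_2z^2+\cdots)$ and $\tilde{\nabla}_\L(0)=a_2=-\beta(\L)$, the $z^3$-coefficient is $-\beta(\L)$, not $+\beta(\L)$ as you wrote; this is precisely the kind of sign bookkeeping you flag, and it must come out right to match the claimed sign of $-\sum h'$. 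For part (2) you take a genuinely different and shorter route: you quote Hoste's state-sum formula $\lambda(S^3_{p_1,p_2}(\L))=-p_1p_2\beta(\L)+p_1a_2(L_1)+p_2a_2(L_2)$ as a black box and substitute part (1) together with $a_2(L_i)=\sum_s h_i(s)$ (valid here because each $L_i$ is an L--space knot, so the torsion correction $\chi(\A^0_s)_{tor}$ vanishes). The paper instead computes $\lambda$ intrinsically as the renormalized Euler characteristic of the truncated Manolescu--Ozsv\'ath surgery complex, summing local Euler characteristics $-h'(\mathbf{s})$ over the 2-cells and $H_i(s_i)$ over the boundary edges; this is more work but is self-contained in Heegaard Floer theory, independently re-derives Hoste's formula as a corollary, and yields the finer Lemma for knots with nontrivial torsion in $\A^0_s$. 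Your derivation is logically sound given that Hoste's theorem is an external classical input (the paper itself quotes it), so I consider the proposal correct, with only the sign/normalization verification left to carry out.
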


Peters established a ``skein inequality" reminiscent of that for knot signature \cite[Theorem 1.4]{Peters} . We extend this to links as follows.

\begin{theorem}
Let $\L=L_1\cup\cdots\cup L_n$ be a link with all pairwise linking numbers zero. Given a diagram of $\L$ with a distinguished crossing $c$ on component $L_i$, let $D_+$ and $D_-$ denote the result of switching $c$ to positive and negative crossings, respectively. Then 
\[
	d(S^{3}_{1, \cdots, 1}(D_-)) - 2 \leq d(S^{3}_{1, \cdots, 1}(D_+)) \leq d(S^{3}_{1, \cdots, 1}(D_-)).
\]
\end{theorem}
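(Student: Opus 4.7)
The plan is to adapt Peters' cobordism argument \cite{Peters} for the knot skein inequality to the linking-number-zero link setting. Because all pairwise linking numbers of $\mathcal{L}$ vanish, the linking matrix of the surgery is $\mathrm{diag}(+1,\ldots,+1)$, so each $S^{3}_{1,\cdots,1}(D_{\pm})$ is an integer homology sphere with a unique $\spinc$ structure, and the $d$-invariants are unambiguously defined.

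First, I would realize the crossing change as a single 2-handle attachment. Near the distinguished crossing $c$ on component $L_i$, introduce a small meridional unknot $U$ encircling both strands of $c$. Because $c$ is a self-crossing of $L_i$, the two strands pass through $U$ with opposite orientation, so $\mathrm{lk}(U, L_j) = 0$ for every component $L_j$. Attaching a 2-handle along $U$ with framing $-1$ to the $(+1,\ldots,+1)$-surgery handlebody on $D_{+}$ and blowing down yields a 4-dimensional cobordism
\[
W : S^{3}_{1,\cdots,1}(D_{+}) \longrightarrow S^{3}_{1,\cdots,1}(D_{-}).
\]
Because $\mathrm{lk}(U, L_j) = 0$ for all $j$, the new 2-handle is orthogonal in the intersection form of $W$ to the existing surgery handles, so $W$ has a rank-one intersection form $(-1)$ sitting as a direct summand. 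In particular $b_{2}^{+}(W)=0$ and $b_{2}^{-}(W)=1$.

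Next, I would invoke the Ozsv\'ath--Szab\'o inequality \cite{OS:Absolutely} for $d$-invariants of cobordisms with $b_{2}^{+}=0$: for any $\spinc$ structure $\mathfrak{s}$ on $W$,
\[
c_{1}(\mathfrak{s})^{2} + b_{2}^{-}(W) \leq 4\bigl(d(S^{3}_{1,\cdots,1}(D_{-})) - d(S^{3}_{1,\cdots,1}(D_{+}))\bigr).
\]
Taking $\mathfrak{s}$ with minimal characteristic value $c_{1}(\mathfrak{s})^{2} = -1$ kills the left-hand side, yielding the upper bound $d(S^{3}_{1,\cdots,1}(D_{+})) \leq d(S^{3}_{1,\cdots,1}(D_{-}))$. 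For the lower bound, I would construct a companion cobordism $W' : S^{3}_{1,\cdots,1}(D_{-}) \to S^{3}_{1,\cdots,1}(D_{+})$ by the analogous Kirby move with opposite framing on $U$, or alternatively invoke the surgery exact triangle relating the Heegaard Floer homologies at the three local resolutions of $c$. Applying the $d$-invariant inequality to $W'$ with a $\spinc$ structure whose first Chern class attains the next characteristic value, $c_{1}^{2} = -9$ on the $(-1)$-lattice, extracts the $-2$ correction and gives $d(S^{3}_{1,\cdots,1}(D_{+})) \geq d(S^{3}_{1,\cdots,1}(D_{-})) - 2$.

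The main obstacle is the explicit Kirby-calculus step: verifying that the crossing change is realized by a single 2-handle attachment along the meridional unknot $U$, and that the resulting cobordism has the claimed rank-one $(-1)$ intersection form. This is precisely where the linking-number-zero hypothesis enters essentially, since a non-zero linking $\mathrm{lk}(U, L_j)$ would destroy the orthogonal block decomposition and change the intersection form. A secondary subtlety is extracting the $-2$ correction for the lower bound: this requires either a careful analysis of the reverse cobordism $W'$ and identification of the correct non-minimal $\spinc$ structure, or invoking the surgery exact triangle together with the grading shift of its connecting map.
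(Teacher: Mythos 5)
Your upper bound $d(S^{3}_{1,\cdots,1}(D_+)) \leq d(S^{3}_{1,\cdots,1}(D_-))$ is essentially correct and matches the paper: the trace of the $(-1)$-framed $2$-handle attached along the crossing circle inside the integer homology sphere $S^{3}_{1,\cdots,1}(D_+)$ is a negative definite rank-one cobordism, and Proposition \ref{prop:OSdfacts}(1) (equivalently \cite[Corollary 9.14]{OS:Absolutely}) with the $\spinc$ structure of square $-1$ gives the inequality. (A minor quibble: the relevant cobordism is just the trace of that single handle attachment on an already-surgered homology sphere, so there is no need to discuss orthogonality to the other surgery handles.)

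The lower bound, however, has a genuine gap. The inequality $c_1(\mfs)^2+b_2^-(W)\le 4\bigl(d(Y_2,\mft_2)-d(Y_1,\mft_1)\bigr)$ holds for \emph{every} $\spinc$ structure on a negative definite $W$, so choosing the non-minimal characteristic vector with $c_1^2=-9$ only produces the \emph{weaker} statement $d(Y_2)-d(Y_1)\ge -2$ for the same ordered pair; it can never reverse the direction of the inequality. Whichever way you run the rank-one cobordism --- the $(-1)$-framed trace out of $S^{3}_{1,\cdots,1}(D_+)$, or the orientation reversal of the $(+1)$-framed trace out of $S^{3}_{1,\cdots,1}(D_-)$ --- the negative definite inequality always yields an upper bound $d(D_+)\le d(D_-)+k(k+1)$ for some $k\ge 0$, never the lower bound $d(D_+)\ge d(D_-)-2$. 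A sanity check: for $D_+$ the right-handed trefoil and $D_-$ the unknot, $d(S^3_{+1}(T_{2,3}))=-2$ while $d(S^3)=0$, so no cobordism inequality of this shape with the maximal square $c_1^2=-1$ can bound $d(D_+)$ from below by $d(D_-)$. The paper obtains the lower bound by an entirely different mechanism (Lemma \ref{borbound}): the crossing circle $L_{n+1}\subset Y:=S^{3}_{1,\cdots,1}(D_-)$ bounds a genus one surface, namely the crossing disk tubed along $L_i$, so $(+1)$-surgery on it is a \emph{large} surgery and $HF^{-}(Y_{1}(L_{n+1}))\cong H_{*}(A^{-}_{0}(L_{n+1}))$. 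Hence $d(Y_{1}(L_{n+1}))=-2H(0)$, while $-2H(1)=d(Y)$, and the monotonicity $H(0)\le H(1)+1$ gives $d(Y_{1}(L_{n+1}))\ge d(Y)-2$. Your fallback suggestion of the surgery exact triangle points in roughly this direction, but as stated it is only a gesture; the missing ingredients are precisely the genus one bound on the crossing circle (which is where the vanishing linking numbers are used) and the resulting large surgery identification together with the step-by-one monotonicity of the $H$-function.
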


We will also generalize Peters'  and Rasmussen's four-ball genus bounds to links with vanishing pairwise linking numbers.
Recall that the $n$ components of a link $\L=L_{1}\cup \cdots \cup L_{n}$ bound pairwise disjoint surfaces in $B^{4}$ if and only the pairwise linking numbers are all zero. In this case, we may define the smooth $4$-ball genus of $L$ as the minimum sum of genera $\sum_{i=1}^{n} g_{i}$, over all disjoint smooth embeddings of the surfaces $\Sigma_i$ bounding link components $L_i$, for $i=1, \cdots n$.

The following proposition is closely related to work of the second author in \cite{Liu}; this is explained in section \ref{sec:genusbounds}. 

\begin{proposition}
\label{prop:genusbound}
Let $\L\subset S^{3}$ denote an $n$-component link with pairwise vanishing linking numbers. Assume that $p_{i}>0$ for all $1\leq i \leq n$. Then 
\begin{equation}
\label{d-gen-ineq}
 d(S^{3}_{-p_1, \cdots, -p_n}(\L), \mft)\leq \sum_{i=1}^{n} d(L(-p_i, 1), t_i) +2f_{g_{i}}(t_i)
 \end{equation}
 and
 \begin{equation}
 \label{d-gen-ineq2}
-d(S^{3}_{p_1, \cdots, p_n}(\L), \mft)\leq \sum_{i=1}^{n} d(L(-p_i, 1), t_i) +2f_{g_{i}}(t_i).
 \end{equation}
Here the $\spinc$-structure $\mft$ is labelled by integers $(t_1, \cdots, t_n)$ where $-p_i/2\le t_i \le p_i/2$,  and $f_{g_{i}}: \mathbb{Z}\rightarrow \mathbb{Z}$ is defined as follows:
\begin{equation}
\label{def f}
f_{g_{i}}(t_{i}) = \left\{
        \begin{array}{ll}
           \left \lceil \dfrac{g_{i}-|t_{i}|}{2}\right\rceil & \quad |t_{i}|\leq g_{i} \\
            0 & \quad |t_{i}| > g_{i}
        \end{array}
    \right. 
\end{equation}

\end{proposition}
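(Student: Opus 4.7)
The plan is to establish (\ref{d-gen-ineq}) by a Peters--Rasmussen--style 4-manifold argument adapted to links, and then to deduce (\ref{d-gen-ineq2}) from (\ref{d-gen-ineq}) via mirror symmetry. For the reduction, I would use $-S^{3}_{p_1, \ldots, p_n}(\L) \cong S^{3}_{-p_1, \ldots, -p_n}(-\L)$, where $-\L$ is the mirror link (which has the same vanishing pairwise linking numbers and the same genera $g_i$); combined with the identity $d(-Y, -\mathfrak{t}) = -d(Y, \mathfrak{t})$ and the symmetries $d(L(-p, 1), -t) = d(L(-p, 1), t)$ and $f_g(-t) = f_g(t)$, the inequality (\ref{d-gen-ineq}) applied to $-\L$ with $\mathrm{Spin}^c$-structure $-\mathfrak{t}$ yields (\ref{d-gen-ineq2}).

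To establish (\ref{d-gen-ineq}), I would first consider the 2-handlebody $W$ obtained by pushing the disjoint oriented smooth surfaces $\Sigma_i \subset B^4$ (with $\partial \Sigma_i = L_i$ and $g(\Sigma_i) = g_i$) into the interior of $B^4$ and attaching $-p_i$-framed 2-handles along each $L_i$. Then $\partial W = S^{3}_{-p_1, \ldots, -p_n}(\L)$, and the closed surfaces $F_i := \Sigma_i \cup (\text{core of } h^{2}_i)$ are pairwise disjoint of genus $g_i$, with $[F_i]^2 = -p_i$ and $[F_i] \cdot [F_j] = 0$ for $i \neq j$ (the disjointness of the $\Sigma_i$ crucially uses the vanishing of pairwise linking numbers). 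Hence $W$ is simply connected and negative definite with intersection form $\mathrm{diag}(-p_1, \ldots, -p_n)$. Next I would apply the standard blow-up genus-reduction procedure $2 g_i$ times to each $F_i$, producing in $\widetilde W := W \# 2g \overline{\mathbb{CP}^{2}}$ (with $g = \sum g_i$) a collection of pairwise disjoint smoothly embedded spheres $\widehat F_i$ with $[\widehat F_i]^{2} = -(p_i + 2 g_i)$. Excising the open tubular neighborhoods $\nu(\widehat F_i)$---each a disk bundle over $S^{2}$ of Euler number $-(p_i + 2g_i)$ with boundary the lens space $L(p_i + 2g_i, 1)$---yields a smooth 4-manifold cobordism $X$ from $S^{3}_{-p_1, \ldots, -p_n}(\L)$ to $\#_i L(p_i + 2g_i, 1)$, after a standard 1-handle stabilization connecting the lens-space boundary components.

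A Mayer--Vietoris computation should confirm that $X$ is negative semi-definite with $b_1(X) = 0$ and $b_2^-(X) = 2g$. Applying the Ozsv\'ath--Szab\'o $d$-invariant inequality for negative semi-definite cobordisms \cite{OS:Absolutely} to $X$, with a $\mathrm{Spin}^c$-structure $\mathfrak{s}$ restricting to $\mathfrak{t}$ on the negative boundary and chosen optimally on the positive end, would yield an inequality of the form
\[
  d\bigl(S^{3}_{-p_1, \ldots, -p_n}(\L), \mathfrak{t}\bigr) \;\leq\; \sum_i d\bigl(L(p_i + 2 g_i, 1), t_i'\bigr) \;-\; \frac{c_1(\mathfrak{s})^2 + 2g}{4}.
\]
The right-hand side can then be simplified using the explicit quadratic formula for lens-space $d$-invariants together with the identity $d(L(p_i + 2g_i, 1), t_i') - (c_1^2 + 2g_i)/4 = d(L(-p_i, 1), t_i) + 2 f_{g_i}(t_i)$ valid for the appropriate matching of characteristic elements on each lens-space summand. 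The main obstacle will be the topological analysis of $X$ (in particular, verifying negative semi-definiteness, $b_1(X) = 0$, and the correct intersection form) together with the combinatorial $\mathrm{Spin}^c$-bookkeeping needed to reproduce the function $f_{g_i}(t_i) = \lceil (g_i - |t_i|)/2 \rceil$ exactly; this matching is entirely analogous to the computation in the knot case carried out in \cite{Peters, Ras:Thesis}, and the ceiling function $f_{g_i}$ arises as the minimum of the $c_1^2$-correction over admissible characteristic elements on $L(p_i + 2g_i, 1)$ restricting to the given Spin$^c$-structure on $S^{3}_{-p_1, \ldots, -p_n}(\L)$.
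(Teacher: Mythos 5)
Your reduction of \eqref{d-gen-ineq2} to \eqref{d-gen-ineq} via the mirror link is fine and is exactly what the paper does, and the initial construction of the handlebody $W$ with $\partial W = S^{3}_{-p_1,\ldots,-p_n}(\L)$ containing disjoint closed surfaces $F_i$ of genus $g_i$ and square $-p_i$ is also the right starting point. The proof breaks, however, at the central step: the ``standard blow-up genus-reduction procedure'' you invoke does not exist. Blowing up at a point of an embedded surface and passing to the proper transform $[F_i]-e_j$ lowers the self-intersection by one but \emph{preserves} the genus; there is no general operation that converts a genus-$g_i$ surface into an embedded sphere in the class $[F_i]-e_1-\cdots-e_{2g_i}$ in $W\#2g_i\overline{\mathbb{CP}^2}$. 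If such a procedure existed it would trivialize the minimal genus problem: applied to a smooth cubic curve $C\subset\mathbb{CP}^2$ (genus one, $[C]=3h$) it would produce an embedded sphere in the class $3h-e_1-e_2$ in $\mathbb{CP}^2\#2\overline{\mathbb{CP}^2}$, whereas the adjunction-type results of Ozsv\'ath--Szab\'o show that the minimal genus of that class is one. Genus can only be traded for double points of an \emph{immersed} sphere, and the number of double points of an immersed sphere representing $[F_i]$ is not controlled by $g_i$; so the number of blow-ups needed to embed a sphere in a modified class is unbounded in terms of the genus. Consequently the cobordism $X$ to a connected sum of lens spaces, and the subsequent $c_1^2$-optimization you propose, cannot be carried out.

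The paper's argument keeps the genus-$g_i$ surfaces rather than destroying them: one removes from $W$ the tubular neighborhoods of the $F_i$ (disk bundles over genus-$g_i$ surfaces whose boundaries are the circle bundles $B_{-p_i}$), obtaining a negative semi-definite cobordism from $\#_i B_{-p_i}$ to the surgery manifold, and then applies the Ozsv\'ath--Szab\'o inequality for three-manifolds with standard $HF^\infty$ (Proposition \ref{prop:standard-inequality}); this is packaged as \cite[Proposition 3.8]{Liu} and yields \eqref{dbound}, namely $d(S^{3}_{-p_1,\ldots,-p_n}(\L),\mft)\le\sum_i d_{bot}(B_{-p_i},t_i)+\sum_i g_i$. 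The function $f_{g_i}$ then comes not from a lens-space computation but from Park's formula for $d_{bot}(B_{-p_i},t_i)$ (Theorem \ref{d-invaraint of circle bundle}, rewritten as \eqref{d circle bundle via f}), whose parity-dependent case analysis is precisely what produces the ceiling $\lceil(g_i-|t_i|)/2\rceil$. To repair your write-up you would need to replace the blow-up step by this circle-bundle excision and cite (or reprove) the computation of $d_{bot}$ for circle bundles.
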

The $d$-invariant of $(\pm 1, \pm 1)$-surgery on the 2-bridge link $\L=b(8k, 4k+1)$ was computed by Y. Liu in \cite{LiuY1}. Together with this calculation, we are able to apply the genus bound \eqref{d-gen-ineq2} to recover the fact that such a link $\L$ has smooth four-genus one. We also demonstrate that this bound is sharp for Bing doubles of knots with positive $\tau$ invariant. For  more details, see section \ref{subsec:genusboundsexamples}.

Because Theorem \ref{thm:generalizedniwu} allows us to compute the $d$-invariants of $S^{3}_{\pm \p}(\L)$ for two-component L--space links, when we combine Theorem \ref{thm:generalizedniwu} with Proposition \ref{prop:genusbound} we have the following improved bound. 

\begin{theorem}
\label{thm:h-f}
Let $\L=L_{1}\cup L_{2}$ denote a two-component L--space link with vanishing linking number. Then for all $p_1, p_2>0$ and a $\spinc$-structure $\mft=(t_1, t_2)$ on $S^{3}_{p_1, p_2}$, we have 
\begin{equation}
\label{h-f-inequality}
 h(s_1, s_2) \leq f_{g_{1}}(t_{1})+f_{g_{2}}(t_{2})
\end{equation}
where  $-p_i/2\le t_i\le p_i/2$  and $(s_1, s_2)$ is a lattice point in the $\spinc$-structure $\mft$. 
\end{theorem}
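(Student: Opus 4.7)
The plan is to combine Theorem \ref{thm:generalizedniwu}(b), which gives an exact formula for $d(S^3_{p_1,p_2}(\L),(t_1,t_2))$ in terms of the $h$-function, with the genus bound \eqref{d-gen-ineq2} of Proposition \ref{prop:genusbound} applied to $S^3_{p_1,p_2}(\L)$. Since the link has vanishing linking number and two unknotted (or more generally, genus-$g_i$-bounding) components, we are in a position to invoke both results with $n=2$.

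Concretely, I would first substitute the formula
\[
d(S^3_{p_1,p_2}(\L),(t_1,t_2)) = d(L(p_1,1),t_1)+d(L(p_2,1),t_2) - 2\max\{h(s_{\pm\pm}(t_1,t_2))\}
\]
from Theorem \ref{thm:generalizedniwu}(b) into the inequality
\[
-d(S^3_{p_1,p_2}(\L),\mft) \leq d(L(-p_1,1),t_1)+d(L(-p_2,1),t_2)+2f_{g_1}(t_1)+2f_{g_2}(t_2)
\]
from Proposition \ref{prop:genusbound}. Then I would use the orientation-reversal identity $d(L(-p,1),t) = -d(L(p,1),t)$ for lens spaces; the lens-space correction terms on the two sides cancel, yielding
\[
\max\{h(s_{\pm\pm}(t_1,t_2))\} \leq f_{g_1}(t_1)+f_{g_2}(t_2).
\]

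The remaining step is to upgrade this bound, which a priori holds only at the four lattice points $s_{\pm\pm}(t_1,t_2)$ closest to the origin in each quadrant of the $\spinc$-structure $\mft$, to the asserted bound at \emph{every} lattice point $(s_1,s_2)$ in that $\spinc$-structure. For this I would appeal to the fact that for an L--space link with vanishing linking number, the $h$-function is weakly decreasing as one moves away from the origin along either coordinate axis (equivalently, the elementary surgery maps $v^+,h^+$ are surjective, so consecutive differences of $H$ on any coordinate ray lie in $\{0,1\}$ in the appropriate direction, and $h=H-H_O$ inherits the same monotonicity since $H_O$ is explicit). Any lattice point in the $\spinc$-structure $(t_1,t_2)$ differs from one of the four closest points $s_{\pm\pm}(t_1,t_2)$ by a translation that only moves further from the origin in each coordinate, so $h$ can only decrease, and the desired inequality \eqref{h-f-inequality} follows.

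The main potential obstacle is the monotonicity step: one has to be careful that the four representatives $s_{\pm\pm}(t_1,t_2)$ genuinely dominate all other lattice points in the $\spinc$-structure, which relies on the componentwise monotonicity of the $h$-function for L--space links with zero linking number. Once this is granted, the proof is essentially the algebraic cancellation of the lens space $d$-invariants described above.
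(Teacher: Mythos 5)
Your proposal is correct and follows essentially the same route as the paper: substitute the surgery formula of Theorem \ref{thm:generalizedniwu}(b) into the genus inequality \eqref{d-gen-ineq2}, cancel the lens space correction terms, and then pass from the four extremal points $s_{\pm\pm}(t_1,t_2)$ to an arbitrary lattice point in the $\spinc$-structure via the componentwise monotonicity of $h$ away from the origin (the paper's Lemma \ref{lem: h increases}). The monotonicity step you flag as a potential obstacle is exactly what that lemma supplies, so there is no gap.
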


\medskip
{\bf Organization of the paper.} Section \ref{sec:background} covers necessary background material. In subsection \ref{subsec:standard}, we introduce  standard 3-manifolds along with the definition and properties of the $d$-invariants for such manifolds. In subsection \ref{sec:hfunction}, we define the $h$-function of an oriented link $\L\subset S^{3}$ and review how to compute the $h$-function of an L--space link from its Alexander polynomial. Sections \ref{sec:linksurgery} and  \ref{subsec:dfromcells} are devoted to the generalized Ni-Wu $d$-invariant formula and its associated link surgery and cell complexes. In subsection \ref{subsec:knots surgery} we briefly review the surgery complex for knots, and in subsection \ref{subsec:truncation} we set up the Manolescu-Ozsv\'ath link surgery formula for links, and describe an associated cell complex and the truncation procedure. In section \ref{subsec:dfromcells} we prove Theorem \ref{thm:generalizedniwu} and the subsequent statements involving $\nu^{+}$. In section \ref{unknot components}, we classify L--space surgeries on L--space links with unknotted components and prove Theorem \ref{thm:taubound}. In section \ref{sec:relationships}, we represent the Sato-Levine invariant and Casson invariant of $S^{3}_{\pm1, \pm1}(\L)$ as linear combinations of the $h$-function for two-component L--space links with vanishing linking number. In section \ref{sec:skein}, we prove that the $d$-invariants of surgery 3-manifolds satisfy a skein inequality. In section \ref{sec:genusbounds}, we describe several bounds on the smooth four-genus of a link from the $d$-invariant and use this to establish the four-ball genera of several two-component links. 

\medskip
\textbf{Conventions.} In this article, we take singular homology coefficients in $\Z$ and Heegaard Floer homology coefficients in the field $\F=\Z/2\Z$ unless otherwise stated. We consider nonzero surgeries $S^{3}_{p_{1}, \cdots, p_{n}}(\L)$ on links $\L=L_{1}\cup \cdots \cup L_{n}$ in $S^{3}$, i.e. $p_{i}\neq 0$ for any $1\leq i \leq n$. 
Our convention on Dehn surgery is that $p$ surgery on the unknot produces the lens space $L(p, 1)$. We will primarily use the `minus' version of Heegaard Floer homology and adopt the convention that $d$-invariants are calculated from $HF^-(Y, \mft)$ and that $d^{-}(S^3)=0$. Section \ref{sec:background} contains further details on our degree conventions.


\section{Background}
\label{sec:background}

\subsection{$\spinc$-structures and $d$-invariants}
\label{subsec:spinc}

In this paper,  all the links are assumed to be oriented. We use $\L$ to denote a link in $S^{3}$, and  $L_{1}, \cdots, L_{n}$  to denote the link components. Then $\L_{1}$ and $\L_{2}$ denote different links in $S^{3}$, and $L_{1}$ and $L_{2}$ denote different components in the same link. Let $|\L|$ denote the number of components of $\L$. We denote  vectors in the $n$-dimensional lattice $\mathbb{Z}^{n}$ by bold letters. For two vectors $\bm{u}=(u_{1}, u_{2}, \cdots, u_{n})$ and $\bm{v}=(v_{1}, \cdots, v_{n})$ in $\mathbb{Z}^{n}$,  we write $\bm{u}\preceq \bm{v}$  if $u_{i}\leq v_{i}$ for each $1\leq i\leq n$, and $\bm{u}\prec \bm{v}$ if $\bm{u}\preceq \bm{v}$ and $\bm{u}\neq \bm{v}$. Let $\bm{e}_{i}$ be the vector in $\Z^{n}$ where the $i$-th entry is 1 and other entries are 0. For any subset $B\subset \lbrace 1, \cdots, n \rbrace$, let $\bm{e}_{B}=\sum_{i\in B} \bm{e}_{i}$.

Recall that in general, there is a non-canonical correspondence $\spinc(Y)\cong H^2(Y)$. For surgeries on links in $S^3$ we will require the following definition to parameterize $\spinc$-structures.
\begin{definition}
\label{Hfunctions}
For an oriented link $\L=L_{1}\cup \cdots \cup L_{n}\subset S^{3}$, define $\mathbb{H}(\L)$ to be the affine lattice over $\Z^{n}$:
$$\H(\L)=\oplus_{i=1}^{n}\H_{i}(\L), \quad \H_{i}(\L)=\Z+\dfrac{lk(L_{i}, \L\setminus L_{i})}{2}$$
where $lk(L_{i}, \L\setminus L_{i})$ denotes the linking number of $L_{i}$ and $\L\setminus L_{i}$. 
\end{definition} 

Suppose $\L$ has vanishing pairwise linking numbers. Then $\H(\L)=\Z^{n}$; we will assume this throughout the paper. Let $S^{3}_{p_1, \cdots, p_n}(\L)$ or $S^{3}_{\p}(\L)$ denote the surgery 3-manifold with integral surgery coefficients $\p = (p_1, \cdots, p_n)$. The quotient $\Z^{n}/ \Lambda \Z^{n}$ can be naturally identified with the space of Spin$^{c}$ structures on the surgery manifold $S^{3}_{p_{1}, \cdots, p_{n}}(\L)$,  where $\Lambda$ is the surgery matrix with diagonal entries $p_i$ and other entries 0. So  Spin$^{c}(\Sp)\cong \Z^{n}/ \Lambda \Z^{n}\cong \Z_{p_1}\oplus \cdots \oplus \Z_{p_n} \cong H^{2}(S^3_\p(\L))$. We therefore label Spin$^{c}$-structures $\mft$ on $\Sp$ as $(t_1, \cdots, t_n)$ such that  $-|p_i|/2 \leq t_i\leq |p_i|/2$.

For a rational homology sphere $Y$ with a Spin$^{c}$-structure $\mft$, the Heegaard Floer homology $HF^{+}(Y,\mft)$ is an absolutely graded $\F[U^{-1}]$-module, and its free part is isomorphic to $\F[U^{-1}]$. Likewise  $HF^{-}(Y,\mft)$ is an absolutely graded $\F[U]$-module. 
Given an oriented link $\L$ in $S^3$, one can also define the link Floer complex. 
An $n$-component link $\L$ induces $n$ filtrations on the Heegaard Floer complex $CF^{-}(S^3)$, and this filtration is indexed by the affine lattice $\H(\L)$. The link Floer homology $HFL^{-}(\L, \bm{s})$ is the homology of the associated graded complex with respect to this filtration, and is a module over $\F[U]$. 
We refer the reader to \cite{OS:Absolutely, ManOzs} for general background on Heegaard Floer and link Floer homology, and to \cite{BG} for a concise review relevant to our purposes. 

\begin{remark}
\label{rem:completion}
Following \cite{ManOzs}, we will sometimes need to work with the completed surgery complexes, which are defined as modules over $\F[[U]]$. For a rational homology sphere $Y$, the complex $CF^{-}(Y)$ and its completion over $\F[[U]]$
carry the same information.
\end{remark}

The $d$-invariant $d(Y, \mft$) is defined to be  the maximal degree of a non-torsion class $x\in HF^{-}(Y, \mft)$. In this article we adopt the convention that $d(S^3)=0$. This is consistent with the conventions of \cite{ManOzs, BG} but differs (by a shift of two) from that of \cite{OS:Absolutely}. 

We require the following statements on the $d$-invariant. 
\begin{proposition}\cite[Section 9]{OS:Absolutely}
\label{prop:OSdfacts}
Let $(W, \mfs): (Y_1, \mft_1) \rightarrow (Y_2, \mft_2 )$ be a $\spinc$ cobordism. 
\begin{enumerate}
	\item \label{fact1} If $W$ is negative definite, then $d(Y_2, \mft_2)- d(Y_1, \mft_1)\geq (c_1(\mfs)^2+b_2(W)) / 4$. 
	\item \label{fact2} If $W$ is a rational homology cobordism, then $d(Y_1, \mft_1) = d(Y_2, \mft_2)$.  
\end{enumerate} 
\end{proposition}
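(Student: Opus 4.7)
The plan is to derive both statements from the functoriality of Heegaard Floer homology under $\spinc$ cobordisms. A $\spinc$ cobordism $(W,\mfs)\colon (Y_1,\mft_1)\to (Y_2,\mft_2)$ induces a $U$-equivariant map $F^-_{W,\mfs}\colon HF^-(Y_1,\mft_1)\to HF^-(Y_2,\mft_2)$ that is homogeneous of degree $(c_1(\mfs)^2-2\chi(W)-3\sigma(W))/4$. I would open the proof by recording this formula and then reducing both parts to the single assertion that, when $W$ is negative definite, $F^-_{W,\mfs}$ carries non-torsion classes to non-torsion classes.

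For (1), the decisive input is the fact (due to \os) that for negative definite $W$ the induced map on $HF^\infty$ is an isomorphism. I would cite this rather than reprove it; the idea is to decompose $W$ as a sequence of two-handle attachments along a negative definite framed link and invoke the surgery exact triangle together with the explicit model $HF^\infty(Y,\mft)\cong\F[U,U^{-1}]$ for rational homology spheres. Granting this, non-torsion classes (those whose image in $HF^\infty$ is nonzero) are preserved by $F^-_{W,\mfs}$. Choosing $x\in HF^-(Y_1,\mft_1)$ non-torsion of top degree $d(Y_1,\mft_1)$, the element $F^-_{W,\mfs}(x)$ is then non-torsion in $HF^-(Y_2,\mft_2)$ of degree $d(Y_1,\mft_1)+(c_1(\mfs)^2-2\chi(W)-3\sigma(W))/4$, giving $d(Y_2,\mft_2)-d(Y_1,\mft_1)\ge (c_1(\mfs)^2-2\chi(W)-3\sigma(W))/4$. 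Finally, assuming $b_1(W)=0$ (which one can always arrange by surgery on 1-cycles without affecting the $\spinc$ data), the negative definite hypothesis gives $\sigma(W)=-b_2(W)$ and $\chi(W)=1+b_2(W)$, so the shift simplifies to $(c_1(\mfs)^2+b_2(W))/4$ up to the normalization constant absorbed by the $d(S^3)=0$ convention fixed in this paper.

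For (2), a rational homology cobordism $W$ has $H_*(W,Y_i;\Q)=0$, hence $b_2(W)=0$ and $H^2(W;\Q)=0$. The latter forces $c_1(\mfs)$ to be torsion, so $c_1(\mfs)^2=0$. Such a $W$ is simultaneously negative and positive definite, and the reverse cobordism $-W\colon (Y_2,\mft_2)\to (Y_1,\mft_1)$ is also a rational homology cobordism. Applying part (1) to both $W$ and $-W$ (with the same $\spinc$ structure) yields $d(Y_2,\mft_2)\ge d(Y_1,\mft_1)$ and $d(Y_1,\mft_1)\ge d(Y_2,\mft_2)$, hence equality.

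The main obstacle is the preservation of non-torsion under negative definite cobordism maps; this is where the hypothesis actually enters, and it is the one nontrivial ingredient I would cite from \cite{OS:Absolutely}. Once that is in hand the rest is a bookkeeping exercise with $\chi$, $\sigma$, $b_2$, and the degree shift formula. Since the entire argument is a straightforward transcription of Section 9 of \cite{OS:Absolutely}, I would phrase the proof as a focused summary of these ingredients rather than an independent derivation.
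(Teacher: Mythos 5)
The paper offers no argument of its own for this statement---Proposition \ref{prop:OSdfacts} is quoted directly from Section 9 of \cite{OS:Absolutely}---so the only question is whether your reconstruction is sound. Your overall strategy is the standard one: use the degree-shift formula for $F^-_{W,\mfs}$ together with the fact that a negative definite cobordism between rational homology spheres induces an isomorphism on $HF^\infty$ (hence carries non-torsion classes to non-torsion classes), then deduce (2) by applying (1) to both $W$ and the reversed cobordism. Part (2) as you argue it is correct.

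Part (1), however, contains a genuine error in the bookkeeping you dismiss as routine. For a connected cobordism $W$ between two rational homology spheres with $b_1(W)=0$ one has $b_0(W)=1$, $b_4(W)=0$, and $b_3(W)=1$: by Poincar\'e--Lefschetz duality $H_3(W;\Q)\cong H^1(W,\partial W;\Q)$, and the long exact sequence of the pair gives $H^1(W,\partial W;\Q)\cong \operatorname{coker}\bigl(H^0(W;\Q)\to H^0(\partial W;\Q)\bigr)\cong\Q$ because $\partial W$ has two components. Hence $\chi(W)=b_2(W)$, not $1+b_2(W)$; your formula is the one for a manifold bounding a single rational homology sphere (deleting a ball to turn such an $X$ into a cobordism from $S^3$ subtracts $1$ from $\chi$). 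With your value the degree shift is $(c_1(\mfs)^2+b_2(W)-2)/4$, so the argument as written only proves the weaker inequality $d(Y_2,\mft_2)-d(Y_1,\mft_1)\geq (c_1(\mfs)^2+b_2(W))/4-1/2$. The attempted rescue---that the missing $1/2$ is ``absorbed by the $d(S^3)=0$ convention''---cannot work: any uniform additive renormalization of $d$ shifts $d(Y_1,\mft_1)$ and $d(Y_2,\mft_2)$ by the same constant and cancels in their difference, and the degree shift of $F^-_{W,\mfs}$ is likewise unaffected by such a convention. The fix is simply to compute $\chi(W)$ correctly, after which the shift is exactly $(c_1(\mfs)^2+b_2(W))/4$ and no normalization argument is needed. (A smaller point worth a sentence in a complete write-up: when you kill $b_1(W)$ by surgery on circles, you should note that surgering a circle of infinite order in $H_1$ leaves the intersection form, and hence negative definiteness and $c_1(\mfs)^2$, unchanged.)
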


\begin{remark}
\label{monopole}
An equivalence of monopole Floer and Heegaard Floer homology has been established by work of Kutluhan-Lee-Taubes \cite{KLT1,KLT2,KLT3,KLT4,KLT5} and Colin-Ghiggini-Honda \cite{CGH1, CGH2, CGH3} and Taubes \cite{Taubes}. A further equivalence between monopole Floer and the $S^1$-equivariant homology of the Seiberg-Witten
Floer spectrum is proved in \cite{LidmanManolescu}. Following further work in \cite{Gripp,GrippHuang,Gardiner}, the absolute $\mathbb{Q}$-gradings of these theories agree. For rational homology spheres, 
\[
	d(Y, \mft) = -2h(Y, \mft) = 2\delta(Y, \mft),
\]
where $h(Y, \mft)$ is the Fr\o yshov invariant in monopole Floer homology \cite{KM, Froyshov} and $\delta(Y, \mft)$ is the analogous invariant of the Floer spectrum Seiberg-Witten theory \cite{Manolescu}. 
\end{remark}


\subsection{Standard 3-manifolds}
\label{subsec:standard}

In this subsection, we will introduce  $d$-invariants for standard 3-manifolds, and in particular, for circle bundles over oriented closed genus $g$ surfaces. 

Let $H$ be a finitely generated, free abelian group and $\Lambda^{\ast}(H)$ denote the exterior algebra of $H$. 
As in \cite[Section 9]{OS:Absolutely}, we say that $HF^\infty(Y)$ is standard if for each torsion $\spinc$ structure $\mft$,
\[
	HF^\infty(Y, \mft) \cong \Lambda^{\ast}H^1(Y;\Z) \otimes_\Z \F[U, U^{-1}]
\]
as $\Lambda^{\ast}H_{1}(Y; \Z)/ \textup{Tors} \otimes \F[U]$-modules. The group $\Lambda^* H^1(Y;\Z)$ is graded by requiring $gr (\Lambda^{b_1(Y)}H^1(Y;\Z)) = b_1(Y)/2$ and the fact that the action of $H_1(Y; \Z)/Tors$ by contraction drops gradings by $1$. For example, $\#^{n} S^2\times S^1$ has standard $HF^{\infty}$  \cite{OS:three}. 

For any $\Lambda^{\ast}(H)$-module $M$, we denote the kernel of the action of $\Lambda^{\ast}(H)$ on $M$ as 
$$\mathcal{K}M:=\{ x\in M \mid v\cdot x=0 \quad  \forall  \quad v\in H \}.$$
Let $\mathcal{I}$ denote the (two-sided) ideal in $\Lambda^{\ast}(H)$ generated by $H$. Define 
$$\mathcal{Q}M:= M/ (\mathcal{I} \cdot M). $$

For a standard 3-manifold $Y$,  we have the following induced maps:
\begin{equation}
\mathcal{K}(\pi):   \mathcal{K}HF^{\infty}(Y, \mft)\rightarrow \mathcal{K}HF^{+}(Y, \mft)
\end{equation}
and 
\begin{equation}
\mathcal{Q}(\pi): \mathcal{Q}HF^{\infty}(Y, \mft) \rightarrow \mathcal{Q}HF^{+}(Y, \mft).
\end{equation}

Define the \emph{bottom} and \emph{top correction terms} of $(Y, \mft)$ to be the minimal grading of any nonzero element in the image of $\mathcal{K}(\pi)$ and $\mathcal{Q}(\pi)$, denoted by $d_{bot}$ and $d_{top}$,  respectively \cite{Levine}. 
Levine and Ruberman established the following properties of $d_{top}$ and $d_{bot}$. 

\begin{proposition}\cite[Proposition 4.2]{Levine}
Let $Y$ be a closed oriented standard 3-manifold, and let $\mft$ be a torsion $\spinc$ structure on $Y$. Then 
$$d_{top}(Y, \mft)=-d_{bot}(-Y, \mft).$$
\end{proposition}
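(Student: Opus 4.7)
The strategy is to derive the identity from Ozsv\'ath--Szab\'o's orientation-reversal duality together with a piece of exterior-algebra bookkeeping. The key input is the natural graded-perfect pairing
\[
HF^{+}(Y,\mft)\otimes_{\F[U]} HF^{-}(-Y,\mft)\longrightarrow \F[U,U^{-1}]/\F[U],
\]
together with its $HF^{\infty}$ analogue, both of which reverse absolute $\Q$-gradings (the overall shift is zero for torsion $\spinc$ structures under our normalization $d(S^{3})=0$). Since $H_{1}(Y;\Z)=H_{1}(-Y;\Z)$ and the contraction action of this group is natural under orientation reversal, these pairings intertwine the $\Lambda^{\ast}H_{1}/\textup{Tors}$-actions on the two sides via transposition.

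Next, I would invoke the following purely algebraic fact: for any graded $\Lambda^{\ast}(H)$-module $M$ with graded dual $M^{\vee}$, the action of $H$ on $M^{\vee}$ is the transpose of its action on $M$, yielding natural grading-reversing isomorphisms
\[
\mathcal{K}(M^{\vee})\cong (\mathcal{Q}M)^{\vee},\qquad \mathcal{Q}(M^{\vee})\cong (\mathcal{K}M)^{\vee}.
\]
Combined with the general fact that $\mathrm{image}(f^{\vee})$ is graded dual to $\mathrm{image}(f)$, this means that if $f$ and $g$ are transposes under the duality above, then the minimal grading in $\mathrm{image}(\mathcal{Q}f)$ equals the negative of the minimal grading in $\mathrm{image}(\mathcal{K}g)$. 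Applying this to $f=\pi_{Y}$, and identifying its transpose (via the long exact sequences $HF^{\infty}\to HF^{+}$ and $HF^{-}\to HF^{\infty}$ together with the standardness hypothesis $HF^{\infty}(Y,\mft)\cong \Lambda^{\ast}H^{1}(Y;\Z)\otimes \F[U,U^{-1}]$) with the corresponding map $\pi_{-Y}$ on the $-Y$ side, we conclude
\[
d_{top}(Y,\mft)=-d_{bot}(-Y,\mft).
\]

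The main obstacle is the bookkeeping of gradings in the Ozsv\'ath--Szab\'o duality (in particular verifying that the overall shift is zero for torsion $\spinc$ structures on a standard 3-manifold) together with the identification alluded to above between the transpose of $\pi_{Y}$ and the map $\pi_{-Y}$: the naive transpose is actually the inclusion $\iota_{-Y}:HF^{-}(-Y,\mft)\to HF^{\infty}(-Y,\mft)$, and one must use the long exact sequence together with the standardness hypothesis to match this, at the level of $\mathcal{K}$ and of minimal gradings, with $\pi_{-Y}$. Once this identification is in place the argument is essentially formal, and the symmetric identity $d_{bot}(Y,\mft)=-d_{top}(-Y,\mft)$ then follows by exchanging the roles of $Y$ and $-Y$.
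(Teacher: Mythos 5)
First, a point of reference: the paper does not prove this statement at all --- it is quoted from Levine--Ruberman \cite[Proposition 4.2]{Levine} and used as a black box, so there is no in-paper argument to compare yours against. Your outline is, in substance, the same duality argument that the cited source uses: orientation-reversal duality exchanges $HF^{+}(Y,\mft)$ with $HF^{-}(-Y,\mft)$ and reverses gradings, it intertwines the $H_{1}(Y;\Z)/\textup{Tors}$-actions, and the exterior-algebra identities $\mathcal{K}(M^{\vee})\cong(\mathcal{Q}M)^{\vee}$ and $\mathrm{im}(f^{\vee})\cong(\mathrm{im}\,f)^{\vee}$ convert the ``top'' invariant of $Y$ into the ``bottom'' invariant of $-Y$. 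That much of the write-up is correct and is the right strategy.

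As written, however, the proposal stops exactly where the substantive work begins, and you say so yourself. Two things are asserted rather than established. (i) The grading normalization: the Ozsv\'ath--Szab\'o pairing carries a degree shift (in the convention where $d^{-}(S^{3})=-2$ it pairs $HF^{+}_{d}(Y,\mft)$ with $HF^{-}_{-d-2}(-Y,\mft)$), and one must actually verify that under this paper's convention $d^{-}(S^{3})=0$ the net shift vanishes; this is routine but not automatic, and getting it wrong changes the statement by an additive constant. (ii) More seriously, the graded dual of $\pi_{Y}\colon HF^{\infty}(Y,\mft)\to HF^{+}(Y,\mft)$ is the map $HF^{-}(-Y,\mft)\to HF^{\infty}(-Y,\mft)$, whereas $d_{bot}(-Y,\mft)$ is defined through $\mathcal{K}(\pi_{-Y})$. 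Passing from ``maximal grading of a nonzero element of $\mathrm{im}(\mathcal{K}\iota_{-Y})$'' to ``minimal grading of a nonzero element of $\mathrm{im}(\mathcal{K}\pi_{-Y})$'' is not formal: the sequence of complexes is short exact but the induced sequence on homology is only a long exact triangle, and $\mathcal{K}$ is merely left exact, so one genuinely needs the standardness hypothesis (which identifies $\mathcal{K}HF^{\infty}(-Y,\mft)$ with a single $\F[U,U^{-1}]$-tower generated by $\Lambda^{b_{1}}H^{1}$) to chase the element through the triangle and compare the two extremal gradings. Until step (ii) is carried out, what you have is a correct plan --- essentially Levine and Ruberman's --- but not yet a proof.
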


\begin{proposition}\cite[Proposition 4.3]{Levine}
Let $Y, Z$ be closed oriented standard 3-manifolds, and let $\mft, \mft'$ be torsion $\spinc$ structures on $Y, Z$ respectively. Then 
$$d_{bot}(Y\# Z, \mft\# \mft')=d_{bot}(Y, \mft)+d_{bot}(Z, \mft')$$
and 
$$d_{top}(Y\# Z, \mft\# \mft')=d_{top}(Y, \mft)+d_{top}(Z, \mft').$$
\end{proposition}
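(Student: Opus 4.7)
The plan is to combine the connected-sum Künneth formula in Heegaard Floer homology with the explicit structure of $HF^\infty$ for standard 3-manifolds. By Ozsv\'ath--Szab\'o's connected-sum theorem, there is a grading-preserving quasi-isomorphism
\[
	CF^\circ(Y \# Z, \mft \# \mft') \simeq CF^\circ(Y, \mft) \otimes_{\F[U]} CF^\circ(Z, \mft')
\]
(with an explicit absolute shift) for $\circ \in \{-, \infty\}$, and the long exact sequence projection $\pi : HF^\infty \to HF^+$ is compatible with this identification. Moreover, the $\Lambda^*(H_1/\mathrm{Tors})$-module structure on the tensor product is the external product of the two factor actions.

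Next I would use the standard identification $HF^\infty(Y, \mft) \cong \Lambda^* H^1(Y;\Z) \otimes_\F \F[U, U^{-1}]$ to verify that $\mathcal{K}HF^\infty(Y, \mft)$ is the rank-one free $\F[U, U^{-1}]$-summand generated by $1 \in \Lambda^0 H^1$, and dually that $\mathcal{Q}HF^\infty(Y, \mft)$ is the rank-one free quotient corresponding to $\Lambda^{b_1} H^1$. Both statements follow because $H_1/\mathrm{Tors}$ acts by contraction as a derivation on the exterior algebra, so its joint kernel is the bottom exterior power and its cokernel is the top. Since $H^1(Y \# Z; \Z) \cong H^1(Y; \Z) \oplus H^1(Z; \Z)$ and $\Lambda^*(A \oplus B) \cong \Lambda^*A \otimes \Lambda^*B$ as graded $\Lambda^*$-modules, the Künneth isomorphism then identifies $\mathcal{K}HF^\infty(Y) \otimes \mathcal{K}HF^\infty(Z)$ with $\mathcal{K}HF^\infty(Y \# Z)$, and similarly for $\mathcal{Q}$.

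Finally, because $\pi$ is compatible with the Künneth product, the image of $\mathcal{K}(\pi_{Y \# Z})$ equals the external tensor product of the images of $\mathcal{K}(\pi_Y)$ and $\mathcal{K}(\pi_Z)$. In a tensor product of two bounded-below graded $\F[U]$-modules, the minimum grading of a nonzero homogeneous element is the sum of the two minimum gradings, so $d_{bot}(Y \# Z, \mft \# \mft') = d_{bot}(Y, \mft) + d_{bot}(Z, \mft')$. The same argument applied to $\mathcal{Q}$ gives the additivity of $d_{top}$. The main obstacle I anticipate is carefully tracking the absolute $\mathbb{Q}$-grading shift in the connected-sum formula and confirming that it is exactly what is needed for the shifts to cancel in the final additivity statement; here one uses that $\mft$ and $\mft'$ are torsion so that all gradings in sight are well-defined rationals.
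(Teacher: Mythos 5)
First, note that the paper offers no proof of this statement: it is quoted directly from Levine--Ruberman \cite[Proposition 4.3]{Levine}, so there is no in-paper argument to compare against. Your overall strategy --- the connected-sum K\"unneth formula combined with the identification of $\mathcal{K}HF^\infty(Y,\mft)$ with the bottom exterior power $\Lambda^0 H^1(Y)\otimes\F[U,U^{-1}]$ and of $\mathcal{Q}HF^\infty(Y,\mft)$ with the top one --- is the right idea and is essentially the strategy of the cited source. Your computation of the joint kernel and cokernel of the contraction action, and the decomposition $\Lambda^*(A\oplus B)\cong\Lambda^*A\otimes\Lambda^*B$, are correct.

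The gap is in the final step. The K\"unneth formula is a chain-level statement about $CF^-$ (or $CF^{\le 0}$), not about $HF^+$, and the assertion that the image of $\mathcal{K}(\pi_{Y\#Z})$ is the external tensor product of the images of $\mathcal{K}(\pi_Y)$ and $\mathcal{K}(\pi_Z)$ cannot be taken literally: each of those images is a tower $\mathcal{T}^+=\F[U,U^{-1}]/U\F[U]$, and $\mathcal{T}^+\otimes_{\F[U]}\mathcal{T}^+=0$ because $U$ acts surjectively on $\mathcal{T}^+$ while every element of $\mathcal{T}^+$ is $U$-torsion. So the concluding appeal to ``the minimum grading of a tensor product of bounded-below modules'' is applied to an object that is not a tensor product of the two images. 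A second point needing care is that $\mathcal{K}$ is only left exact, so it need not commute with taking images of the maps in the $HF^-\to HF^\infty\to HF^+$ sequence. The standard repair mirrors Ozsv\'ath--Szab\'o's proof that $d$ is additive for rational homology spheres: reformulate $d_{bot}$ and $d_{top}$ in terms of $HF^{\le 0}$ (or $HF^-$), where the relevant pieces are free rank-one $\F[U]$-modules and the K\"unneth product of two distinguished non-torsion classes yields one inequality for each of $d_{bot}$ and $d_{top}$; the reverse inequalities then follow from the duality $d_{top}(Y,\mft)=-d_{bot}(-Y,\mft)$ (the preceding proposition in the paper) applied to $-(Y\# Z)=(-Y)\#(-Z)$. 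With that restructuring your argument goes through.
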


Let $B_n$ denote a circle bundle over a closed oriented genus $g$ surface with Euler characteristic $n\neq 0$. It can be obtained from $n$-framed surgery in $\#^{2g} S^{2}\times S^{1}$ along the ``Borromean knot." The torsion $\spinc$ structures on $B_{n}$ can be labelled by $-|n|/2\leq i \leq |n|/2$ \cite{Park, Ras}, though the labelling is not a bijection. A surgery exact triangle argument for the Borromean knot shows that
\[
	HF^{\infty}(B_{n}, i)\cong HF^{\infty}(\#^{2g} S^{2}\times S^{1}, \mft),
\]
where $\mft$ is the unique torsion $\spinc$ structure on $\#^{2g} (S^2\times S^1)$. Hence, $B_{n}$ is also standard for $n\neq 0$ \cite{Park, Ras}. 

The $d$-invariants for circle bundles $B_{n}$ have been computed in \cite{Park}.

\begin{theorem}\cite[Theorem 4.2.3]{Park}
\label{d-invaraint of circle bundle}
Let  $B_{-p}$ denote a circle bundle over a  closed oriented genus $g$ surface $\Sigma_{g}$ with Euler number $-p$. If $p>0$, then for any choice of $-p/2\leq i \leq p/2$ 
\[ d_{bot}(B_{p}, i)=-d_{top}(B_{-p}, i)=\phi(p, i)-g.\]
and
\[
d_{bot}(B_{-p}, i) = \left\{
        \begin{array}{ll}
            -\phi(p, i)-g & \quad \text{ if } |i|>g \\
            -\phi(p, i)-|i| & \quad  \text{ if } |i|\leq g \text{ and } g+i \text{ is even}\\
            -\phi(p, i)-|i|+1 & \quad \text{ if } |i|<g \text{ and } g+i \text{ is odd}, 
        \end{array}
    \right. 
\]

where
\[
	\phi(p, i)=d(L(p, 1), i)=-\max_{\{s\in \Z \mid s\equiv i (\text{mod } p)\}}  \dfrac{1}{4}  \left( 1-\dfrac{(p+2s)^{2}}{p} \right).
\]
\end{theorem}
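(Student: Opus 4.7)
My plan is to compute the $d$-invariants of $B_{\pm p}$ by realizing $B_n$ as $n$-framed surgery on the Borromean knot $\gamma \subset Y_0 := \#^{2g}(S^2 \times S^1)$ and applying the integer surgery mapping cone formula of \os. Since $HF^\infty$ is invariant, up to absolute grading shifts, under surgery on null-homologous knots, the surgery exact triangle yields $HF^\infty(B_{\pm p}, i) \cong \Lambda^{\ast} H^1(Y_0; \Z) \otimes \F[U, U^{-1}]$ as a $\Lambda^{\ast} H_1(Y_0;\Z)/\mathrm{Tors} \otimes \F[U]$-module for each torsion $\spinc$ structure $i$. This confirms that $B_{\pm p}$ is standard and exhibits the exterior algebra structure on which the definitions of $d_{bot}$ and $d_{top}$ depend.

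Next, I would write out the knot surgery mapping cone for $\gamma$, using the known description of $CFK^-(\gamma)$ as $\Lambda^{\ast} H \otimes \F[U]$ equipped with its natural Alexander filtration. The cohomology of this cone computes $HF^-(B_{\pm p}, i)$, and the absolute gradings are controlled by the $\spinc$ cobordism from the $2$-handle attachment: the grading shift of the form $\bigl(c_1(\mfs)^2 - n\bigr)/4$ from Proposition~\ref{prop:OSdfacts} recovers precisely the lens space term $\phi(p,i) = d(L(p,1), i)$ after maximizing over $s \equiv i \pmod p$. For the positive Euler number case $B_p$, the minimal-degree non-torsion element of $\mathcal{K} HF^\infty$ sits in exterior degree zero, and its image in $HF^-$ carries an additional $-g$ shift coming from the bottom of $\Lambda^{\ast} H$; this gives $d_{bot}(B_p, i) = \phi(p,i) - g$, and the identity $d_{top}(B_{-p}, i) = -d_{bot}(B_p, i)$ then follows from the orientation reversal statement quoted from \cite{Levine}.

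For $B_{-p}$ the computation of $d_{bot}$ requires identifying, inside the mapping cone, the least-graded $\Lambda^{\ast}H$-annihilated element whose image in $HF^-(B_{-p}, i)$ is non-torsion. The dichotomy $|i| > g$ versus $|i| \leq g$ reflects whether the Alexander extremum contributing this element lies outside or inside the support of $CFK^-(\gamma)$; in the outer regime only the bottom exterior generator survives, giving $-\phi(p,i)-g$, whereas in the inner regime the surviving generator lives in an intermediate exterior degree determined by $|i|$. The parity of $g+i$ then distinguishes whether that optimal generator has the correct parity to lie in the kernel of the $\Lambda^{\ast}H$-action or whether one must shift up by one exterior degree, producing the $-|i|$ versus $-|i|+1$ alternatives.

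The principal obstacle is precisely this last parity analysis. Tracking the connecting differentials in the surgery cone for the Borromean knot and determining which exterior-algebra generator minimizes the grading subject to lying in $\mathcal{K} HF^\infty$ and mapping nontrivially into $HF^-$ is the technical heart of the proof, and requires a careful bookkeeping of the Alexander filtration on $\Lambda^{\ast} H \otimes \F[U]$ and its interaction with the absolute gradings inherited from the surgery cobordism.
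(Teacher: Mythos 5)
First, a point of comparison: the paper does not prove this statement at all --- it is imported verbatim from Park's thesis \cite[Theorem 4.2.3]{Park}, so there is no internal argument to measure your proposal against. Your outline does follow the strategy of that source (and of the earlier Ozsv\'ath--Szab\'o computations it builds on): realize $B_{\pm p}$ as $\pm p$-framed surgery on the Borromean knot in $\#^{2g}(S^2\times S^1)$, use the identification of its knot Floer complex with $\Lambda^{\ast}H^1(\Sigma_g)\otimes \F[U]$ with $\Lambda^{g+j}H^1$ sitting in Alexander grading $j$, and read off $d_{bot}$ and $d_{top}$ from the integer surgery mapping cone, with the absolute grading pinned down by comparison with the unknot, i.e.\ with $L(p,1)$. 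The first displayed formula, $d_{bot}(B_p,i)=\phi(p,i)-g$, and the relation $d_{top}(B_{-p},i)=-d_{bot}(B_p,i)$ via \cite{Levine}, follow essentially as you describe, since $\mathcal{K}HF^\infty$ is the $\F[U,U^{-1}]$-tower in exterior degree zero, which sits at grading $-g$ relative to the middle by the grading convention on $\Lambda^{\ast}H^1$.

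However, as a proof the proposal has a genuine gap, and you name it yourself: the entire content of the second displayed formula --- the trichotomy $-\phi(p,i)-g$ versus $-\phi(p,i)-|i|$ versus $-\phi(p,i)-|i|+1$ according to whether $|i|>g$, or $|i|\le g$ with $g+i$ even, or $|i|<g$ with $g+i$ odd --- is only described, not derived. Everything preceding it (standardness of $B_{\pm p}$, the lens-space shift $\phi(p,i)$, the $-g$ from the bottom of the exterior algebra) reproduces known generalities about standard manifolds; the substance of the theorem is the computation of the minimal grading of a nonzero element in the image of $\mathcal{K}(\pi)$ for the \emph{negative} surgery, where the mapping cone carries one extra $\A^{1}$-type summand in each $\spinc$ structure and the $U$-torsion of the $A_i$-complexes of the Borromean knot genuinely contributes. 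To close the argument you would need to compute the homology of those $A_i$-complexes (supported in Alexander gradings $|j|\le g$), determine in which exterior degree the surviving $\Lambda^{\ast}H^1$-annihilated tower lies as a function of $i$, and verify that the parity of $g+i$ governs the one-step correction; none of this bookkeeping is carried out, and without it the three-case formula is an assertion rather than a conclusion. In short, the plan is sound and consistent with \cite{Park}, but the step that distinguishes this theorem from formal properties of standard $HF^\infty$ is exactly the step left undone.
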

 
\begin{remark}
For the rest of the paper, we use $\phi(p, i)$ to denote the $d$-invariant of the Spin$^{c}$ lens space $(L(p, 1), i)$ where $-p/2\leq i \leq p/2$ and $p>0$. For $p<0$, $\phi(p, i)=-\phi(-p, i)$. In this paper, we use the convention that $p$-surgery on the unknot yields the lens space $L(p, 1)$. 
\end{remark}

\begin{remark}
Observe that we can rewrite the formula in Theorem \ref{d-invaraint of circle bundle} using the function $f$ defined by \eqref{def f}:
\begin{equation}
\label{d circle bundle via f}
d_{bot}(B_{-p}, i)=-\phi(p, i)+2f_{g}(i)-g.
\end{equation}
\end{remark}

Ozsv\'ath and Szab\'o established the behaviour of the $d$-invariants of standard 3-manifolds under negative semi-definite $\spinc$-cobordisms. 

\begin{proposition}\cite[Theorem 9.15]{OS:Absolutely}
\label{prop:standard-inequality}
Let $Y$ be a three-manifold with standard $HF^\infty$, equipped with a torsion $\spinc$ structure $\mft$. Then for each negative semi-definite four-manifold $W$ which bounds $Y$ so that the restriction map $H^1(W)\rightarrow H^1(Y)$ is trivial, we have the inequality:
\begin{equation}
\label{eqn:standard-inequality}
	c_1(\mathfrak{s})^2 +  b_2^-(W) \leq 4d_{bot}(Y, \mft) +2b_1(Y)
\end{equation}
for all $\spinc$ structures $\mfs$ over $W$ whose restriction to $Y$ is $\mft$.
\end{proposition}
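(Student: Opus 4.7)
The plan is to apply the Heegaard Floer cobordism machinery to a punctured version of $W$. Removing an open $4$-ball gives $W^{\circ} := W \setminus \mathrm{int}(B^{4})$, which I regard as a Spin$^{c}$ cobordism $(W^{\circ}, \mfs^{\circ}) \colon (S^{3}, \mfs_{0}) \to (Y, \mft)$, where $\mfs^{\circ}$ is the unique extension of $\mfs$ across the removed ball (using that Spin$^{c}$ structures on $B^{4}$ are unique and that $c_{1}$ restricts to $0$ on $S^{3}$). The associated cobordism map on Heegaard Floer homology shifts the absolute $\Q$-grading by
\[
\Delta \;=\; \frac{c_{1}(\mfs^{\circ})^{2} - 2\chi(W^{\circ}) - 3\sigma(W^{\circ})}{4}.
\]
My first task is to convert the topological hypotheses on $W$ into algebraic facts about this map.

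The core technical step, and what I expect to be the main obstacle, is to produce a nonzero class in $\mathcal{K}HF^{\infty}(Y, \mft)$ in the image of the cobordism map at a controlled grading. Both hypotheses enter here. The negative semi-definite condition $b_{2}^{+}(W) = 0$ is used to establish non-vanishing of the induced map on $HF^{\infty}$: following \os's strategy, I would blow up $W$ at suitably many points (controlling the effect on $c_{1}^{2}$ via the blow-up formula), and then apply the Heegaard Floer surgery exact triangle to rule out that the induced map on the infinite tower is zero. Separately, the hypothesis that $H^{1}(W) \to H^{1}(Y)$ is trivial is used to confine the image to $\mathcal{K}HF^{\infty}$: the cobordism map is $\Lambda^{\ast}(H_{1}(W^{\circ})/\mathrm{Tors})$-equivariant, with the $H_{1}(Y)/\mathrm{Tors}$-action on the target factoring through the inclusion $H_{1}(Y) \to H_{1}(W^{\circ})$. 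By universal coefficients, the triviality of $H^{1}(W) \to H^{1}(Y)$ forces $H_{1}(Y;\Z)/\mathrm{Tors} \to H_{1}(W;\Z)/\mathrm{Tors}$ to vanish, and since $H_{1}(S^{3}) = 0$ the image is annihilated by $H_{1}(Y)/\mathrm{Tors}$ and so lies in $\mathcal{K}HF^{\infty}$.

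Post-composing with $\pi \colon HF^{\infty} \to HF^{+}$ produces a nonzero class in $\mathrm{Im}(\mathcal{K}\pi) \subset \mathcal{K}HF^{+}(Y, \mft)$ whose absolute grading is determined by $\Delta$ together with the canonical grading offset of the top exterior power $\Lambda^{b_{1}(Y)}H^{1}(Y)$ at grading $b_{1}(Y)/2$. Comparing this grading with the definition of $d_{bot}(Y,\mft)$ yields the desired one-sided bound. The final, mostly bookkeeping step is to rewrite the inequality in the stated form by substituting $\chi(W^{\circ}) = \chi(W) - 1$, $\sigma(W) = -b_{2}^{-}(W)$ (from $b_{2}^{+}(W) = 0$), and invoking the long exact sequence of the pair $(W, Y)$---which, under the hypothesis that $H^{1}(W) \to H^{1}(Y)$ is trivial, allows the combination $b_{1}(W) - b_{3}(W)$ appearing inside $\Delta$ to be expressed in terms of $b_{1}(Y)$. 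Collecting terms recovers the claimed inequality $c_{1}(\mfs)^{2} + b_{2}^{-}(W) \leq 4 d_{bot}(Y, \mft) + 2 b_{1}(Y)$.
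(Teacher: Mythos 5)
First, a remark on scope: the paper does not prove this proposition at all --- it is quoted from \cite[Theorem 9.15]{OS:Absolutely} --- so there is no internal proof to compare against, and your sketch is effectively a reconstruction of Ozsv\'ath--Szab\'o's argument. It assembles the right ingredients for that argument (puncturing $W$, the degree-shift formula, non-vanishing of $F^{\infty}$ for $b_2^+(W)=0$ via blow-ups and the surgery triangle, and $\Lambda^{\ast}(H_1/\mathrm{Tors})$-equivariance combined with the triviality of $H^1(W)\to H^1(Y)$ to force the image into $\mathcal{K}HF^{\infty}(Y,\mft)$), but the concluding step as written proves the wrong inequality. Since $d_{bot}$ is the \emph{minimal} grading of a nonzero element of $\mathrm{Im}\,\mathcal{K}(\pi)$, exhibiting a single nonzero class in that image at a grading $g$ determined by $\Delta$ shows only $d_{bot}\le g$, an upper bound; the proposition asserts the lower bound $d_{bot}\ge \bigl(c_1(\mfs)^2+b_2^-(W)-2b_1(Y)\bigr)/4$. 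What is actually needed is that $F^{\infty}_{W^{\circ},\mfs}$ maps $HF^{\infty}(S^3)$ \emph{onto} $\mathcal{K}HF^{\infty}(Y,\mft)$; non-triviality does upgrade to surjectivity because $\mathcal{K}HF^{\infty}(Y,\mft)$ is a rank-one free module and the map is homogeneous with unit leading coefficient, but this must be said, as it is exactly the content of the hard lemma. Granting surjectivity, the commutative square relating $\pi$, $F^{\infty}$ and $F^{+}$ gives $\mathrm{Im}\,\mathcal{K}(\pi)=F^{+}\bigl(HF^{+}(S^3)\bigr)$, which is supported in degrees $\ge\Delta$ because $HF^{+}(S^3)$ is supported in degrees $\ge 0$. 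It is this containment of the \emph{entire} image of $\mathcal{K}(\pi)$, not the existence of one class in it, that bounds $d_{bot}$ from below.

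Two smaller points on the bookkeeping. Because $W$ is only negative \emph{semi}-definite, $b_2(W)=b_2^-(W)+b_2^0(W)$ where $b_2^0(W)$ is the nullity of the intersection form, and the term $2b_1(Y)$ in the statement does not come from the grading offset of $\Lambda^{b_1}H^1(Y)$ (that offset is already absorbed into the definition of $d_{bot}$ via $\mathcal{K}$); it comes from the estimate $b_2^0(W)\le b_1(Y)$, which follows because the radical of the intersection form on $H_2(W;\Q)$ is the image of $H_2(Y;\Q)\to H_2(W;\Q)$ (Poincar\'e--Lefschetz duality plus the long exact sequence of the pair). With that, one gets
$\Delta=\tfrac14\bigl(c_1(\mfs)^2+b_2^-(W)+2b_1(W)+2b_3(W)-2b_2^0(W)\bigr)\ge\tfrac14\bigl(c_1(\mfs)^2+b_2^-(W)-2b_1(Y)\bigr)$,
which closes the argument once the surjectivity step above is in place.
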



\subsection{The $h$-function and L--space links}
\label{sec:hfunction}

We review the $h$-function for oriented links $\L\subseteq S^{3}$, as defined by the first author and N\'emethi \cite{GN}.

A link $\L=L_{1}\cup \cdots \cup L_{n}$ in $S^{3}$ defines a filtration on the Floer complex $CF^{-}(S^{3})$, and the filtration is indexed by the $n$-dimensional lattice $\H(\L)$ (see Definition \ref{Hfunctions}). 
Given $\bm{s}=(s_{1}, \cdots, s_{n})\in \H(\L)$,  the \emph{generalized Heegaard Floer complex} $\A^{-}(\L, \bm{s}) \subset CF^{-}(S^3)$ is the $\F[[U]]$-module defined to be a subcomplex of $CF^{-}(S^{3})$ corresponding to the filtration indexed by $\bm{s}$ \cite{ManOzs} (here we implicitly completed $CF^{-}(S^{3})$ over $\F[[U]]$, see Remark \ref{rem:completion}). 

By the large surgery theorem \cite[Theorem 12.1]{ManOzs}, the homology of $\A^{-}(\L, \bm{s})$ is isomorphic to the Heegaard Floer homology of a large surgery on the link $\L$ equipped with some Spin$^{c}$-structure as an $\F[[U]]$-module. 
Thus the homology of $\A^{-}(\L, \bm{s})$ is non-canonically isomorphic to  a direct sum of one copy of $\F[[U]]$ and some $U$-torsion submodule, and so the following definition is well-defined.

\begin{definition}\cite[Definition 3.9]{BG}
\label{Hfunction}
For an oriented link $\L\subseteq S^{3}$, we define the $H$-function $H_{\L}(\bm{s})$ by saying that $-2H_{\L}(\bm{s})$ is the maximal homological degree of the free part of $H_{\ast}(\A^{-}(\L, \bm{s}))$ where $\bm{s}\in \H$. 
\end{definition}

\begin{remark}
We sometimes write $H_{\L}(\bm{s})$ as $H(\bm{s})$ for simplicity if there is no confusion in the context. 
\end{remark}
More specifically, the large surgery theorem of Manolescu-Ozsv\'ath \cite[Theorem 12.1]{ManOzs} implies that $-2H_{\L}(\bm{s})$ is the $d$-invariant of large surgery on $\L$, after some degree shift that depends on the surgery coefficient and $\bm{s}$ (see \cite[Section 10]{ManOzs}, \cite[Theorem 4.10]{BG}).
Note that  the $H$-function is a topological invariant of links in the three-sphere since it is defined in terms of the link invariant $CFL^{\infty}$.

Many practitioners of Heegaard Floer homology are more accustomed to working with the integer-valued knot invariants $V^+_s$ and $H^+_s$ of Ni and Wu \cite{NiWu}. For knots, 
$H_K(s)=V^+_s$. For example, the $H$-function of the left-handed trefoil is $H(s) = 0$ for $s\geq0$, $H(s) = -s$ for $s<0$.

We now list several properties of the $H$-function.
\begin{lemma}\cite[Proposition 3.10]{BG}
\label{h-function increase}
{\emph(Controlled growth)} For an oriented link $\L\subseteq S^{3}$, the $H$-function $H_{\L}(\bm{s})$ takes nonnegative values, and $H_{\L}(\bm{s}-\bm{e}_{i})=H_{\L}(\bm{s})$ or $H_{\L}(\bm{s}-\bm{e}_{i})=H_{\L}(\bm{s})+1$ where $\bm{s}\in \H$. 

\end{lemma}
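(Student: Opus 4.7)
The plan is to analyze the natural chain maps between the $\mathcal{A}^{-}$ complexes and $CF^{-}(S^{3})$, tracking the top generator $\eta_{\bm{s}}$ of the free part of $H_{\ast}(\mathcal{A}^{-}(\L, \bm{s}))$ (whose degree is $-2H_{\L}(\bm{s})$ by definition). There are two key maps: the subcomplex inclusion $\iota: \mathcal{A}^{-}(\L, \bm{s} - \bm{e}_{i}) \hookrightarrow \mathcal{A}^{-}(\L, \bm{s})$ and the master inclusion $\iota': \mathcal{A}^{-}(\L, \bm{s}) \hookrightarrow CF^{-}(S^{3})$. Recall that $HF^{-}(S^{3}) \cong \F[[U]]$ sits in non-positive degrees with top generator in degree zero, by the convention $d(S^{3}) = 0$.

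For nonnegativity, I would first produce a non-torsion class in $\mathcal{A}^{-}(\L, \bm{s})$. Let $\xi \in CF^{-}(S^{3})$ be a cycle representing the generator of $HF^{-}(S^{3})$, and let $\bm{a}$ denote its Alexander filtration. For $k$ large enough the class $U^{k}\xi$ lies in $\mathcal{A}^{-}(\L, \bm{s})$ since the $U$-action lowers each filtration coordinate, and it is still non-zero in $HF^{-}(S^{3})$. Hence $\iota'_{\ast}$ is non-zero on the free part, so $\iota'_{\ast}(\eta_{\bm{s}}) \neq 0$; since $HF^{-}(S^{3})$ has no classes in positive degree, $-2H_{\L}(\bm{s}) \leq 0$, that is, $H_{\L}(\bm{s}) \geq 0$.

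The inequality $H_{\L}(\bm{s}) \leq H_{\L}(\bm{s}-\bm{e}_{i})$ then follows from applying the same trick to the composite $\iota' \circ \iota$: the image of $\eta_{\bm{s}-\bm{e}_{i}}$ in $HF^{-}(S^{3})$ is non-zero, so $\iota_{\ast}(\eta_{\bm{s}-\bm{e}_{i}})$ cannot be torsion in $H_{\ast}(\mathcal{A}^{-}(\L, \bm{s}))$ (otherwise $\iota'_{\ast}$ would annihilate it), hence it lies in the free part and comparison of degrees yields the bound. For the upper bound $H_{\L}(\bm{s}-\bm{e}_{i}) \leq H_{\L}(\bm{s}) + 1$, I would invoke the multi-variable link Floer structure: the variable $U_{i}$ drops the $i$-th Alexander filtration by one and the Maslov grading by two, so it induces a chain map $U_{i}: \mathcal{A}^{-}(\L, \bm{s}) \to \mathcal{A}^{-}(\L, \bm{s} - \bm{e}_{i})$ of degree $-2$ whose composition with $\iota$ agrees with $U$-multiplication on $\mathcal{A}^{-}(\L, \bm{s})$ after the specialization $U_{1}=\cdots=U_{n}=U$. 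On homology this gives $\iota_{\ast}(U_{i}\eta_{\bm{s}}) = U\eta_{\bm{s}}$, which is non-torsion of degree $-2H_{\L}(\bm{s}) - 2$; by the same torsion-avoiding argument $U_{i}\eta_{\bm{s}}$ is non-torsion in $H_{\ast}(\mathcal{A}^{-}(\L, \bm{s}-\bm{e}_{i}))$ of that degree, and comparison with $\eta_{\bm{s}-\bm{e}_{i}}$ yields the required inequality.

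The main obstacle is making precise the compatibility between the single $U$-action used in Manolescu-Ozsv\'ath's definition of $\mathcal{A}^{-}$ and the multi-variable $U_{i}$-action coming from link Floer homology. One needs to verify that after specialization the chain-level map $U_{i}: \mathcal{A}^{-}(\L, \bm{s}) \to \mathcal{A}^{-}(\L, \bm{s} - \bm{e}_{i})$ is well defined and that its composition with the inclusion $\iota$ is chain-homotopic to $U$-multiplication, so that the degree comparisons on homology go through as claimed. Once this bookkeeping is settled, the inequalities for both directions combine with nonnegativity to complete the proof.
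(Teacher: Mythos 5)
Your argument is correct, and it is essentially the standard one: the paper itself offers no proof of this lemma, citing it as \cite[Proposition 3.10]{BG}, and the proof given there runs along the same lines as yours (nonnegativity from the inclusion $\A^{-}(\L,\bm{s})\hookrightarrow CF^{-}(S^3)$ hitting the non-torsion part of $HF^{-}(S^3)\cong\F[[U]]$ in non-positive degrees, and the two inequalities from degree comparisons of non-torsion classes under the inclusion $\A^{-}(\L,\bm{s}-\bm{e}_i)\hookrightarrow\A^{-}(\L,\bm{s})$ and under the $U$-action). One remark: the ``main obstacle'' you flag at the end is avoidable. In the convention used here, where $\A^{-}(\L,\bm{s})$ is the filtration subcomplex of $CF^{-}(S^3)$ spanned by monomials $U^k x$ with $A_j(x)-k\le s_j$ for all $j$, the single variable $U$ already lowers every Alexander coordinate by one, so $U\cdot\A^{-}(\L,\bm{s})\subseteq\A^{-}(\L,\bm{s}-\bm{1})\subseteq\A^{-}(\L,\bm{s}-\bm{e}_i)$ and the composite with the inclusion back into $\A^{-}(\L,\bm{s})$ is literally multiplication by $U$; no multi-variable $U_i$ action or chain homotopy between $U_i$ and $U$ is needed, and the degree-$(-2)$ comparison you want follows immediately.
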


\begin{lemma}\cite[Lemma 5.5]{LiuY2}
\label{h-function symmetry}
{\emph(Symmetry)} 
For an oriented $n$-component  link $\L\subseteq S^{3}$, the $H$-function satisfies $H(-\bm{s})=H(\bm{s})+\sum_{i=1}^{n} s_i$ where $\bm{s}=(s_1, \cdots, s_n)$.
\end{lemma}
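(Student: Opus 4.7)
The plan is to deduce the symmetry from the conjugation invariance of Heegaard Floer homology. The guiding principle is that for any rational homology sphere $Y$ and $\spinc$-structure $\mfs$, one has $d(Y, \mfs) = d(Y, \bar{\mfs})$, where $\bar{\mfs}$ denotes the conjugate $\spinc$-structure.

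First, I would invoke the large surgery theorem of Manolescu--Ozsv\'ath \cite[Theorem 12.1]{ManOzs}: for surgery coefficients $\p = (p_1, \ldots, p_n)$ with each $p_i$ sufficiently large, the homology $H_*(\A^-(\L, \bm{s}))$ is isomorphic to $HF^-(S^3_{\p}(\L), \mfs_{\bm{s}})$ as an $\F[[U]]$-module, after an explicit grading shift. Reading off the top Maslov degree of the free part produces an equality
\[
d(S^3_{\p}(\L), \mfs_{\bm{s}}) = -2H(\bm{s}) + C(\bm{s}, \p),
\]
where $C(\bm{s}, \p)$ is the grading-shift term appearing in the large surgery theorem.

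Next, since $\L$ has vanishing pairwise linking numbers, the involution on $\spinc(S^3_{\p}(\L)) \cong \Z^n/\Lambda\Z^n$ induced by $\spinc$-conjugation corresponds to the map $\bm{s} \mapsto -\bm{s}$ on lattice representatives. Combining conjugation invariance of $d$ with the display above yields
\[
-2H(\bm{s}) + C(\bm{s}, \p) = -2H(-\bm{s}) + C(-\bm{s}, \p).
\]
A direct computation of the explicit shift yields $C(-\bm{s}, \p) - C(\bm{s}, \p) = -2\sum_{i=1}^n s_i$, which is independent of $\p$ (as it must be, since $H$ is a link invariant). Substituting produces the desired symmetry $H(-\bm{s}) = H(\bm{s}) + \sum_{i=1}^n s_i$.

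The main obstacle is the careful bookkeeping of grading shifts in the large surgery theorem. A more conceptual alternative would be to establish conjugation symmetry directly at the level of $CFL^\infty(\L)$: there is an involution sending Alexander grading $\bm{s}$ to $-\bm{s}$ and shifting Maslov grading by $2\sum_{i=1}^n s_i$, from which the claim follows by comparing the top Maslov degrees of the free parts of $H_*(\A^-(\L, \pm\bm{s}))$.
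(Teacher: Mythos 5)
Your strategy coincides with the paper's own justification: Lemma \ref{h-function symmetry} is quoted from \cite{LiuY2}, and the remark immediately following it extends the statement to arbitrary links by exactly the combination you propose, namely the large surgery theorem plus conjugation invariance of the $d$-invariant (together with the fact that, for vanishing linking numbers, conjugation acts on $\spinc(S^3_{\p}(\L))\cong\Z^n/\Lambda\Z^n$ by $\bm{s}\mapsto -\bm{s}$). So the conceptual content of your argument is the right one.

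There is, however, a sign inconsistency in the one step you assert without computing. With your normalization $d(S^3_{\p}(\L), \mfs_{\bm{s}}) = -2H(\bm{s}) + C(\bm{s},\p)$, conjugation invariance gives $-2H(\bm{s}) + C(\bm{s},\p) = -2H(-\bm{s}) + C(-\bm{s},\p)$, hence $H(-\bm{s}) - H(\bm{s}) = \tfrac{1}{2}\bigl(C(-\bm{s},\p) - C(\bm{s},\p)\bigr)$. To land on $H(-\bm{s}) = H(\bm{s}) + \sum_{i} s_i$ you therefore need $C(-\bm{s},\p) - C(\bm{s},\p) = +2\sum_{i} s_i$; the value $-2\sum_i s_i$ that you claim would instead yield $H(-\bm{s}) = H(\bm{s}) - \sum_i s_i$. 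The cleanest way to pin down the shift and its sign, without tracking the gradings in \cite{ManOzs} by hand, is to evaluate everything on the $n$-component unlink $O$: there $H_O(\bm{s}) = \sum_i (|s_i|-s_i)/2$ and $d(S^3_{\p}(O), \mfs_{\bm{s}}) = \sum_i d(L(p_i,1), s_i)$ is itself invariant under $\bm{s}\mapsto -\bm{s}$, so $C(-\bm{s},\p) - C(\bm{s},\p) = 2H_O(-\bm{s}) - 2H_O(\bm{s}) = 2\sum_i s_i$. Since the shift $C(\bm{s},\p)$ in the large surgery theorem depends only on $\bm{s}$ and $\p$ and not on the link, this computation is valid for all $\L$, and with that correction your argument closes.
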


Note that in \cite{LiuY2} the symmetry property is stated for L--space links, but the statement holds more generally. This is because the $H$-function is determined by the $d$-invariant of large surgery along the link and because $d$-invariants are preserved under $\spinc$-conjugation. See for example \cite[Lemma 2.5]{HLZ}.

\begin{lemma}\cite[Proposition 3.12]{BG}
\label{h-function bdy}
{\emph(Stabilization)}
For an oriented link $\L=L_1\cup \cdots \cup L_n \subseteq S^{3}$ with vanishing pairwise linking number, 
\[
	H_{\L}(s_1, \cdots, s_{i-1}, N, s_{i+1},  \cdots, s_n)=H_{\L\setminus L_i}(s_1, \cdots, s_{i-1}, s_{i+1}, \cdots, s_n)
\]
where $N$ is sufficiently large. 

\end{lemma}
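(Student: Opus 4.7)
The plan is to compare the two sides by identifying their underlying chain complexes once $N$ is chosen large enough. Recall from Definition~\ref{Hfunction} that $-2H_{\L}(\bm{s})$ is the maximal homological degree of the free part of $H_{*}(\A^{-}(\L,\bm{s}))$, where $\A^{-}(\L,\bm{s})$ is the $\F[[U]]$-subcomplex of (the completion of) $CF^{-}(S^{3})$ cut out by the link filtration at level $\bm{s}$. Fix an admissible multi-pointed Heegaard diagram $\mathcal{H}$ for $(S^{3},\L)$ with finitely many intersection points. Then $\A^{-}(\L,\bm{s})$ is spanned over $\F[[U]]$ by those generators $x$ of $\mathcal{H}$ whose Alexander multi-grading $(A_{1}(x),\ldots,A_{n}(x))$ satisfies $A_{j}(x)\leq s_{j}$ for every $j$, together with their $U$-translates.

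The first step is the observation that on the subcomplex $\bigcap_{j\neq i}\{A_{j}\leq s_{j}\}$ the $i$-th Alexander coordinate is bounded above. Indeed, there are only finitely many generators of $\mathcal{H}$, each having a specific value of $A_{i}$, and $U$ decreases $A_{i}$ by one. Hence for $N$ larger than the maximum value of $A_{i}(x)$ taken over the finitely many generators $x$ satisfying $A_{j}(x)\leq s_{j}$ for $j\neq i$, the constraint $A_{i}\leq N$ is automatic, giving an equality of $\F[[U]]$-subcomplexes
\[
\A^{-}(\L,(s_{1},\ldots,s_{i-1},N,s_{i+1},\ldots,s_{n}))\ =\ \bigcap_{j\neq i}\bigl\{A_{j}^{\L}\leq s_{j}\bigr\}.
\]

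The second step is to identify the right-hand side with $\A^{-}(\L\setminus L_{i},(s_{1},\ldots,\widehat{s_{i}},\ldots,s_{n}))$. After removing the basepoint pair marking $L_{i}$, the diagram $\mathcal{H}$ becomes an admissible diagram for $(S^{3},\L\setminus L_{i})$, and the Alexander gradings $A_{j}^{\L}$ for $j\neq i$ differ from the gradings $A_{j}^{\L\setminus L_{i}}$ only by the shift $\tfrac{1}{2}\,lk(L_{i},L_{j})$ predicted by the link Floer formalism. By hypothesis every such linking number vanishes, so the two gradings agree on the nose and the corresponding subcomplexes are literally equal. Passing to homology and extracting the maximal degree of the free part then yields the claimed equality of $H$-functions.

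The main obstacle is this sublink identification at the chain level. One has to verify that removing the basepoints of $L_{i}$ produces not only a chain-homotopy equivalence, but an actual chain-level identification of the filtered complexes under the restriction $\{A_{i}\leq N\}$; this relies on the construction of the Alexander gradings in the Manolescu--Ozsv\'ath link surgery formalism. The vanishing pairwise linking hypothesis is precisely what eliminates the half-integer shift $\tfrac{1}{2}\,lk(L_{i},L_{j})$ in the restriction formula, so that the identification is clean and the stabilization statement follows.
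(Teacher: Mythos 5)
The paper itself does not prove this lemma---it is imported wholesale from \cite[Proposition 3.12]{BG}---so there is no internal proof to compare against; your argument is the natural one and, as far as I can tell, essentially the one underlying the cited result: realize $\A^{-}(\L,\bm{s})$ as the filtration subcomplex of $CF^{-}(S^{3})$, observe that the $i$-th constraint becomes vacuous once $N$ is large, and identify what remains with the filtration subcomplex for the sublink, with the vanishing of the linking numbers killing the $\tfrac{1}{2}lk(L_i,L_j)$ shifts in the Alexander gradings. Two points need tightening. First, $\A^{-}(\L,\bm{s})$ is \emph{not} the $\F[[U]]$-span of the intersection points $x$ with $A_j(x)\le s_j$ for all $j$: it is spanned by the monomials $U^k x$ with $A_j(x)-k\le s_j$ for all $j$, and this includes elements $U^k x$ for generators $x$ that violate some of the constraints. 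Consequently your choice of $N$ (the maximum of $A_i$ over the \emph{constrained} generators only) need not make the condition $A_i\le N$ automatic on $\bigcap_{j\ne i}\{A_j\le s_j\}$; you should instead take $N\ge \max_x A_i(x)$ over \emph{all} intersection points of the diagram. This costs nothing, since the lemma only asserts the equality for $N$ sufficiently large, but as written the quantifier is off. Second, the ``main obstacle'' you flag is real but standard: deleting the $z$-basepoints of $L_i$ turns $\mathcal{H}$ into an admissible diagram for $\L\setminus L_i$ carrying extra \emph{free} basepoints, so after the chain-level identification you still must know that the $H$-function computed from such a non-minimal diagram agrees with the intrinsic $H_{\L\setminus L_i}$. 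This is part of the Manolescu--Ozsv\'ath formalism (invariance of the completed complexes under free basepoints, cf.\ Remark \ref{rem:completion}) and should be invoked explicitly rather than left implicit; with that reference added, your proof is complete.
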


For an $n$-component link $\L$ with vanishing pairwise linking numbers, $\H(\L)=\Z^{n}$. The $h$-function $h_{\L}(\bm{s})$  is defined as 
\[
	h_{\L}(\bm{s})=H_{\L}(\bm{s})-H_{O}(\bm{s}),
\]
where $h_\emptyset=0$, $O$ denotes the unlink with $n$ components, and $\bm{s}\in \Z^{n}$. Recall that for split links $\L$, the $H$-function 
$H(\L, \bm{s})=H_{L_{1}}(s_{1})+\cdots +H_{L_{n}}(s_{n})$ where $H_{L_{i}}(s_{i})$ is the $H$-function of the link component $L_{i}$, \cite[Proposition 3.11]{BG}. 
Then $H_{O}(\bm{s})=H(s_{1})+\cdots H(s_{n})$ where $H(s_{i})$ denotes the $H$-function of the unknot. More precisely, $H_{O}(\bm{s})=\sum_{i=1}^{n}(|s_{i}|-s_{i})/2$ by \cite[Section 2.6]{OS:Integer}.  
Hence $H_{\L}(\bm{s})=h_{\L}(\bm{s})$ for all $\bm{s}\succeq \bm{0}$. 
By Lemma \ref{h-function symmetry} we get
\begin{equation}
\label{eq: h symmetry}
h(-\bm{s})=h(\bm{s}).
\end{equation}

\begin{lemma}
\label{lem: h increases}
The function $h$ is non-decreasing towards the origin. That is, $h(\bm{s}-\bm{e}_i)\ge h(\bm{s})$ if $s_i>0$ and $h(\bm{s}-\bm{e}_i)\le h(\bm{s})$ if $s_i\le 0$.  
\end{lemma}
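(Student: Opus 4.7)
The plan is to express the claim as a direct consequence of the controlled growth property of $H_\L$ together with an explicit computation of the analogous difference for the unlink.

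First I would write
\begin{equation*}
h(\bm{s}-\bm{e}_i)-h(\bm{s})=\bigl(H_\L(\bm{s}-\bm{e}_i)-H_\L(\bm{s})\bigr)-\bigl(H_O(\bm{s}-\bm{e}_i)-H_O(\bm{s})\bigr),
\end{equation*}
and invoke Lemma \ref{h-function increase} to see that the first parenthesis lies in $\{0,1\}$ (it is nonnegative by controlled growth, and at most $1$ by the same lemma).

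Next I would compute the second parenthesis explicitly. Using the formula $H_O(\bm{s})=\sum_{j=1}^{n}(|s_j|-s_j)/2$ recalled from \cite[Section 2.6]{OS:Integer}, only the $i$-th summand changes, so
\begin{equation*}
H_O(\bm{s}-\bm{e}_i)-H_O(\bm{s})=\tfrac{1}{2}\bigl(|s_i-1|-(s_i-1)\bigr)-\tfrac{1}{2}\bigl(|s_i|-s_i\bigr).
\end{equation*}
A direct case check on the sign of $s_i$ gives that this difference equals $0$ when $s_i\ge 1$ and equals $1$ when $s_i\le 0$.

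Combining the two computations yields
\begin{equation*}
h(\bm{s}-\bm{e}_i)-h(\bm{s})\in
\begin{cases}
\{0,1\} & \text{if } s_i>0,\\
\{-1,0\} & \text{if } s_i\le 0,
\end{cases}
\end{equation*}
which is exactly the stated monotonicity. There is no real obstacle here; the only thing to be careful about is the case split at $s_i=0$ versus $s_i=1$, since the $|s_i|-s_i$ term is insensitive to positive shifts but jumps by $1$ when the argument crosses from nonpositive to positive. One could alternatively deduce the $s_i\le 0$ half from the $s_i>0$ half via the symmetry $h(-\bm{s})=h(\bm{s})$ from \eqref{eq: h symmetry}, but the direct computation above is just as short.
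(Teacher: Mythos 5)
Your proposal is correct and follows essentially the same route as the paper: both arguments combine the controlled growth property of $H_\L$ (Lemma \ref{h-function increase}) with the explicit values $H_O(s_i)=(|s_i|-s_i)/2$ for the unlink, splitting into the cases $s_i>0$ and $s_i\le 0$. Your write-up is just a slightly more explicit version of the paper's two-line computation.
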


\begin{proof}
 If $s_i>0$ then $H_O(s_i)=H_O(s_i-1)=0$, so
$$
h(\bm{s})-h(\bm{s}-\bm{e}_i)=H(\bm{s})-H(\bm{s}-\bm{e}_i)\le 0.
$$
If $s_i\le 0$ then $H_O(s_i)=-s_i$ and $H_O(s_i-1)=1-s_i$, so
$$
h(\bm{s})-h(\bm{s}-\bm{e}_i)=H(\bm{s})-H(\bm{s}-\bm{e}_i)+1\ge 0.
$$
\end{proof}

\begin{corollary}
\label{h nonnegative}
For all $\bm{s}$ one has $h(\bm{s})\ge 0$.
\end{corollary}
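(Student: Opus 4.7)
The plan is to induct on the number of components $n$ of $\L$. The base case $n=0$ is trivial since $h_{\emptyset}\equiv 0$, while for $n=1$ one has $h_{\L}(s)=H_{\L}(s)$ when $s\ge 0$, hence non-negative by Lemma~\ref{h-function increase}, and the case $s<0$ follows from the symmetry \eqref{eq: h symmetry}. For the inductive step I would aim to prove the stronger statement that for every index $i$ and every $\bm{s}\in\Z^{n}$,
\[
h_{\L}(\bm{s}) \;\ge\; h_{\L\setminus L_{i}}(\bm{s}_{\hat i}),
\]
where $\bm{s}_{\hat i}$ denotes $\bm{s}$ with its $i$-th coordinate deleted. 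Given this comparison, the inductive hypothesis applied to the $(n-1)$-component sublink $\L\setminus L_{i}$ (which again has vanishing pairwise linking numbers) finishes the proof.

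To establish the key inequality, I would push the $i$-th coordinate of $\bm{s}$ away from the origin. If $s_{i}\ge 0$, Lemma~\ref{lem: h increases} says that $h_{\L}(\bm{s}+\bm{e}_{i})\le h_{\L}(\bm{s})$, so iterating gives $h_{\L}(\bm{s})\ge h_{\L}(\bm{s}+k\bm{e}_{i})$ for every $k\ge 0$; choosing $k$ large enough, the stabilization Lemma~\ref{h-function bdy} together with the fact that $H_{O}(s_{1},\dots,N,\dots,s_{n})=H_{O_{n-1}}(\bm{s}_{\hat i})$ for $N\gg 0$ identifies the right-hand side with $h_{\L\setminus L_{i}}(\bm{s}_{\hat i})$. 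If instead $s_{i}<0$, the same lemma gives $h_{\L}(\bm{s})\ge h_{\L}(\bm{s}-k\bm{e}_{i})$ for every $k\ge 0$, and pushing $s_{i}\to-\infty$ one uses the symmetry $h_{\L}(\bm{t})=h_{\L}(-\bm{t})$ in \eqref{eq: h symmetry} to reduce to the already-handled stabilization in the positive direction, arriving again at $h_{\L\setminus L_{i}}(\bm{s}_{\hat i})$.

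The main obstacle, though mild, is the last manoeuvre: Lemma~\ref{h-function bdy} is stated only for $N$ large \emph{positive}, so the $s_{i}\to-\infty$ case is not immediate and must be routed through the symmetry \eqref{eq: h symmetry} applied both to $h_{\L}$ and to $h_{\L\setminus L_{i}}$. Once this bookkeeping is recorded, the chain $h_{\L}(\bm{s})\ge h_{\L\setminus L_{i}}(\bm{s}_{\hat i})\ge\cdots\ge h_{\emptyset}=0$ gives the conclusion.
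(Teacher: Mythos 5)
Your proposal is correct and follows essentially the same route as the paper: induction on the number of components, with stabilization (Lemma \ref{h-function bdy}) handling $s_i\gg 0$, the symmetry \eqref{eq: h symmetry} handling $s_i\ll 0$, and the monotonicity of Lemma \ref{lem: h increases} bridging the middle. The paper phrases the last step as ``$h\ge 0$ at both ends plus non-decreasing towards the origin,'' whereas you push away from the origin to get the (slightly stronger, but equivalent in effect) comparison $h_{\L}(\bm{s})\ge h_{\L\setminus L_i}(\bm{s}_{\hat i})$; the ingredients and logic are the same.
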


\begin{proof}
We prove it by induction on the number $n$ of components of $\L$. If $n=0$, it is clear.
Assume that we proved the statement for $n-1$. Observe that by Lemma \ref{h-function bdy} for $s_i\gg 0$ we have 
$h(\bm{s})=h_{\L\setminus L_i}(\bm{s})\ge 0$. For $s_i\ll 0$ by \eqref{eq: h symmetry} we have
$$
h(\bm{s})=h(-\bm{s})=h_{\L\setminus L_i}(-\bm{s})\ge 0.
$$
Now by Lemma \ref{lem: h increases} we have $h(\bm{s})\ge 0$ for all $\bm{s}$.
\end{proof}

In \cite{OS:lenspaces}, Ozsv\'ath and Szab\'o introduced the concept of L--spaces. 

\begin{definition}
A 3-manifold $Y$ is an L--space if it is a rational homology sphere and its Heegaard Floer homology has minimal possible rank: for any Spin$^{c}$-structure $\mathfrak{s}$, $\widehat{HF}(Y, \mathfrak{s})=\F$ or, equivalently,   $HF^{-}(Y, \mathfrak{s})$ is a free $\F[U]$-module of rank 1. 
\end{definition}

\begin{definition}\cite{GN,LiuY2}
\label{definition of L--space link}
An oriented $n$-component link $\L\subset S^{3}$ is an L--space link if there exists  $\bm{0}\prec \bm{p}\in \mathbb{Z}^{n}$ such that the surgery manifold $S^{3}_{\bm{q}}(\L)$ is an L--space for any $\bm{q}\succeq \bm{p}$. 
\end{definition}

We list some useful properties of L--space links:

\begin{theorem}\cite{LiuY2}
\label{l-space link cond}
(a) Every sublink of an L--space link is an L--space link.

(b) A link is an L--space link if and only if  for all $\bm{s}$ one has $H_{\ast}(\A^{-}(\L, \bm{s}))=\F[[U]]$.

(c) Assume that for some $\bm{p}$ the surgery $S^{3}_{\bm{p}}(L)$ is an L--space. In addition, assume that for all sublinks $\L'\subset \L$ the surgeries $S^{3}_{\bm{p}}(\L')$ are L--spaces too, and the framing matrix $\Lambda$ is positive definite.
Then for all $\bm{q}\succeq \bm{p}$ the surgery manifold $S^{3}_{\bm{q}}(\L)$ is an L--space, and so $\L$ is an L--space link.
\end{theorem}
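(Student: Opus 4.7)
I would establish the three parts via a simultaneous induction on $n = |\L|$, with the base case $n=1$ being known for knots. The engine is the Manolescu--Ozsv\'ath large surgery theorem, which identifies $H_\ast(\A^-(\L, \bm{s}))$ with $HF^-(S^3_{\bm{p}}(\L), \mathfrak{s}(\bm{s}))$ as graded $\F[[U]]$-modules once $\bm{p}$ is coordinatewise sufficiently positive, where $\mathfrak{s}(\bm{s})$ runs through Spin$^c$-structures on the surgery.

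I would prove (c) first, as it is standalone. The plan is to bump one coordinate at a time: view $L_i$ as a knot in $Y := S^3_{\bm{p}'}(\L\setminus L_i)$, where $\bm{p}'$ omits the $i$-th coordinate, and apply the Ozsv\'ath--Szab\'o integer surgery exact triangle to obtain
\begin{equation*}
\mathrm{rk}\,\widehat{HF}(S^3_{\bm{p}+\bm{e}_i}(\L)) \leq \mathrm{rk}\,\widehat{HF}(S^3_{\bm{p}}(\L)) + \mathrm{rk}\,\widehat{HF}(Y).
\end{equation*}
By the sublink hypothesis and the L-space property of $S^3_{\bm{p}}(\L)$, both terms on the right equal $|H_1|$. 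Since $\Lambda$ is diagonal with $p_i>0$ (positive definiteness plus vanishing linking numbers), a direct computation gives $|H_1(S^3_{\bm{p}+\bm{e}_i}(\L))| = |H_1(S^3_{\bm{p}}(\L))| + |H_1(Y)|$, and the universal lower bound $\mathrm{rk}\,\widehat{HF} \geq |H_1|$ then forces equality, making $S^3_{\bm{p}+\bm{e}_i}(\L)$ an L-space. Positive definiteness is preserved under incrementing any $p_i$, so the single-step bump iterates along any monotone path from $\bm{p}$ to $\bm{q}$.

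Part (b) follows next. The forward direction is immediate: choose $\bm{p}$ large enough that $S^3_{\bm{p}}(\L)$ is an L-space and that the large surgery identification applies, and read off $H_\ast(\A^-(\L, \bm{s})) \cong \F[[U]]$. For the converse, suppose $H_\ast(\A^-(\L, \bm{s})) \cong \F[[U]]$ for every $\bm{s}$. Stabilization of the generalized complex $\A^-(\L, (\bm{s}', s_j))$ as $s_j\to\infty$ (paralleling Lemma \ref{h-function bdy}) shows that the same property passes to every sublink $\L' \subset \L$; the inductive hypothesis on (b) then upgrades each $\L'$ to an L-space link, so $S^3_{\bm{p}}(\L')$ is an L-space for $\bm{p}$ sufficiently large. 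Choosing one such $\bm{p}$ large enough that the large surgery identification covers every Spin$^c$ class and summing over them shows $S^3_{\bm{p}}(\L)$ is an L-space; part (c) then propagates this to all $\bm{q}\succeq\bm{p}$.

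Part (a) is now a corollary: the stabilization argument just used in the converse of (b) shows that the generalized Floer complex of any sublink $\L'$ inherits the $\F[[U]]$-property from that of $\L$, and (b) applied in one fewer component (inductive hypothesis) yields the L-space link structure of $\L'$. I expect the main technical obstacle to be in (c), specifically selecting the version of the integer surgery exact triangle whose third term is exactly the sublink surgery $Y$; this likely requires passing through an intermediate $0$-surgery on $L_i$ and chasing the resulting cancellations, and one must also verify that the framing-matrix arithmetic giving $|H_1(S^3_{\bm{p}+\bm{e}_i}(\L))| = |H_1(S^3_{\bm{p}}(\L))| + |H_1(Y)|$ genuinely relies on positive-definiteness (so the argument correctly fails for mixed-sign surgery coefficients).
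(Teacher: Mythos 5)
The paper does not actually prove this theorem; it is quoted verbatim from Y.~Liu's work \cite{LiuY2}, so there is no in-paper argument to compare against. That said, your proposal follows what is essentially the standard route from that reference: the surgery exact triangle plus the rank lower bound $\operatorname{rk}\widehat{HF}(M)\geq |H_1(M)|$ for (c), the large surgery theorem for (b), and the stabilization quasi-isomorphisms $\Phi^{\pm L_i}$ (the content of \cite[Lemma 10.1]{ManOzs}, quoted in Section \ref{subsec:truncation}) for (a). Two of your side worries are unfounded: you do not need to pass through an intermediate $0$-surgery, since the triple of slopes $(\infty, p_i, p_i+1)$ on $L_i\subset S^3_{\bm{p}'}(\L\setminus L_i)$ already satisfies the Farey condition and gives the triangle directly; and the converse of (b) does not actually need (c) or the sublink induction, because the large surgery theorem identifies $HF^-(S^3_{\bm{q}}(\L),\mathfrak{s})$ with $H_*(\A^-(\L,\bm{s}))$ for \emph{every} $\bm{q}$ beyond the threshold, which is exactly the "all sufficiently large surgeries" clause in Definition \ref{definition of L--space link}.

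The one genuine gap is in the iteration step of (c). Your single-coordinate bump from $\bm{q}$ to $\bm{q}+\bm{e}_i$ uses the exact triangle whose third term is $S^3_{\bm{q}'}(\L\setminus L_i)$ with $\bm{q}'$ the restriction of the \emph{current} framing $\bm{q}$, not of the original $\bm{p}$. Once you have bumped some coordinate $j\neq i$ earlier along the monotone path, the hypothesis "$S^3_{\bm{p}}(\L')$ is an L--space for all sublinks" no longer covers this third term. The fix is to strengthen the inductive statement: prove, by induction on $|\L'|$ and on $\sum_i(q_i-p_i)$, that $S^3_{\bm{q}|_{\L'}}(\L')$ is an L--space for every sublink $\L'\subseteq\L$ and every $\bm{q}\succeq\bm{p}$; positive definiteness restricts to sublinks and is preserved under increasing diagonal entries, so the same one-step argument applies at every level. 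A minor further point: the cited statement allows a general positive definite framing matrix $\Lambda$ (nonzero linking numbers), whereas you assume $\Lambda$ diagonal. Your determinant identity $\det\Lambda(\bm{p}+\bm{e}_i)=\det\Lambda(\bm{p})+\det\Lambda_{\hat{i}\hat{i}}$ holds for any symmetric $\Lambda$ by multilinearity in the $i$-th row, and positive definiteness makes both principal minors positive, so the argument goes through unchanged in the general case; you should state it that way rather than invoking vanishing linking numbers.
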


\begin{remark}
If all pairwise linking numbers between the components of $\L$ vanish, then $\Lambda$ is positive definite if and only if all $p_i>0$.
Therefore for (c) one needs to assume that there exist positive $p_i$ such that $S^{3}_{\bm{p}}(\L')$ is an L--space for any sublink $\L'$.
\end{remark}

For L--space links, the $H$-function can be computed from the multi-variable Alexander polynomial.
Indeed, by (b) and the inclusion-exclusion formula, one can write
\begin{equation}
\label{computation of h-function 1}
\chi(HFL^{-}(\L, \bm{s}))=\sum_{B\subset \lbrace 1, \cdots, n \rbrace}(-1)^{|B|-1}H_{\L}(\bm{s}-\bm{e}_{B}),
\end{equation}
as in \cite[(3.14)]{BG}. The Euler characteristic $\chi(HFL^{-}(\L, \bm{s}))$ was computed in \cite{OS:linkpoly},
\begin{equation}
\label{computation 3}
\tilde{\Delta}(t_{1}, \cdots, t_{n})=\sum_{\bm{s}\in \H(\L)}\chi(HFL^{-}(\L, \bm{s}))t_{1}^{s_{1}}\cdots t_{n}^{s_{n}}
\end{equation}
where $\bm{s}=(s_{1}, \cdots, s_{n})$, and
\begin{equation}
\label{mva}
\widetilde{\Delta}_{\L}(t_{1}, \cdots, t_{n}): = \left\{
        \begin{array}{ll}
           (t_{1}\cdots t_{n})^{1/2} \Delta_{\L}(t_{1}, \cdots, t_{n}) & \quad \textup{if } n >1, \\
            \Delta_{\L}(t)/(1-t^{-1}) & \quad  \textup{if } n=1. 
        \end{array}
    \right. 
\end{equation}

\begin{remark}
Here we expand the rational function as power series in $t^{-1}$, assuming that the exponents are bounded in positive direction. The Alexander polynomials are normalized so that they are symmetric about the origin. This still leaves out the sign ambiguity which can be resolved for L--space links by requiring that $H(s)\ge 0$ for all $s$.
\end{remark}

One can regard \eqref{computation of h-function 1} as a system of linear equations for $H(s)$ and solve it explicitly 
using the values of the $H$-function for sublinks as the boundary conditions. We refer to \cite{BG,GN} for general formulas,
and consider only links with one and two components here.

For $n=1$ the equation \eqref{computation of h-function 1} has the form 
\[
\chi(HFL^{-}(\L, s))=H(s-1)-H(s),
\]
so
\[
H(s)=\sum_{s'>s}\chi(HFL^{-}(\L, s')),\ \sum_{s}t^sH(s)=t^{-1}\Delta_{\L}(t)/(1-t^{-1})^2.
\]

For $n=2$ the equation \eqref{computation of h-function 1} has the form 
\begin{equation}
\label{chi from H 2 comp}
\chi(HFL^{-}(\L, \bm{s}))=-H(s_1-1,s_2-1)+H(s_1-1,s_2)+H(s_1,s_2-1)-H(s_1,s_2).\end{equation}

\begin{lemma}
Suppose that $L_1$ and $L_2$ are unknots and $lk(L_1,L_2)=0$, then 
\begin{equation}
\label{h from Alexander}
\sum_{s_1,s_2}t_1^{s_1}t_2^{s_2}h(s_1,s_2)=-\frac{t_1^{-1}t_2^{-1}}{(1-t_1^{-1})(1-t_2^{-1})}\widetilde{\Delta}(t_1,t_2).
\end{equation}
\end{lemma}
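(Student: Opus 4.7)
The plan is to recognize that this is essentially a finite generating function identity, provided one first checks that $h$ has finite support. Namely, by the hypothesis that $L_1$ and $L_2$ are unknots, the $h$-functions of the one-component sublinks vanish identically (indeed $h_{L_i}(s)=H_{L_i}(s)-H_O(s)=0$ since $L_i$ is an unknot). By the stabilization property (Lemma \ref{h-function bdy}), $h(s_1,s_2)=h_{L_2}(s_2)=0$ whenever $s_1$ is sufficiently large, and similarly for $s_2$. Using the symmetry $h(-\bm{s})=h(\bm{s})$ derived in \eqref{eq: h symmetry}, the same vanishing holds for $s_1, s_2$ sufficiently negative. Thus $F(t_1,t_2):=\sum_{\bm{s}}h(\bm{s})t_1^{s_1}t_2^{s_2}$ is a genuine Laurent polynomial.

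Next, I would simplify the right-hand side: the rational factor satisfies
\[
\frac{t_1^{-1}t_2^{-1}}{(1-t_1^{-1})(1-t_2^{-1})}=\frac{1}{(1-t_1)(1-t_2)},
\]
so the claimed identity is equivalent to $(1-t_1)(1-t_2)F(t_1,t_2)=-\widetilde{\Delta}(t_1,t_2)$. I would then expand the left-hand side by distributing and reindexing the four resulting sums by shifting $s_i\mapsto s_i-1$ where appropriate, which gives
\[
(1-t_1)(1-t_2)F(t_1,t_2)=\sum_{\bm{s}}\bigl[h(s_1,s_2)-h(s_1-1,s_2)-h(s_1,s_2-1)+h(s_1-1,s_2-1)\bigr]t_1^{s_1}t_2^{s_2}.
\]

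To finish, I would compare this coefficient by coefficient with $-\widetilde{\Delta}(t_1,t_2)$, whose $(s_1,s_2)$-coefficient is $-\chi(HFL^-(\L,\bm{s}))$ by \eqref{computation 3}. Using formula \eqref{chi from H 2 comp}, the task reduces to verifying that the second mixed difference of $h$ equals the second mixed difference of $H$, i.e.\ that the second mixed difference of $H_O$ vanishes. But for the two-component unlink $H_O(s_1,s_2)=H_O(s_1)+H_O(s_2)$ by \cite[Proposition 3.11]{BG} and the explicit formula $H_O(s)=(|s|-s)/2$, so $H_O$ is a sum of a function of $s_1$ and a function of $s_2$, which visibly has vanishing second mixed difference. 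This completes the proof.

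There is no real obstacle here: the only subtle point is justifying the formal generating function manipulation, which is taken care of by establishing finite support of $h$ at the outset. Everything else is a routine algebraic cancellation reducing the claim to a property of the unlink $H$-function.
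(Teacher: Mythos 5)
Your proof is correct, and it rests on the same two ingredients as the paper's: the inclusion--exclusion relation \eqref{chi from H 2 comp} between $\chi(HFL^-)$ and the mixed second difference of $H$, and the stabilization/symmetry properties that control $h$ away from the origin. The organization is reversed, though. The paper \emph{inverts} the difference operator: it uses Lemma \ref{h-function bdy} to write $h(\bm{s})=H(\bm{s})-H(s_1,N)-H(N,s_2)=-\sum_{\bm{s}'\succeq\bm{s}+\bm{1}}\chi(HFL^-(\L,\bm{s}'))$ and then recognizes the double sum as multiplication of $\widetilde{\Delta}$ by the geometric-series factor $t_1^{-1}t_2^{-1}/\bigl((1-t_1^{-1})(1-t_2^{-1})\bigr)$. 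You instead \emph{apply} the difference operator: clear denominators, match coefficients against \eqref{chi from H 2 comp}, and reduce to the observation that $H_O(s_1,s_2)=H_O(s_1)+H_O(s_2)$ has vanishing mixed second difference. Your route has the small advantage of making the convergence issue explicit (finite support of $h$ is exactly what legitimizes dividing back by $(1-t_1)(1-t_2)$ in the ring of Laurent series in $t_i^{-1}$, which the paper leaves implicit), at the cost of having to note separately that $h=H-H_1-H_2$ when the components are unknots --- a fact the paper's summation derives as a byproduct. Both proofs implicitly use that $\L$ is an L--space link, since \eqref{chi from H 2 comp} is derived under that hypothesis; that assumption is inherited from the surrounding section rather than the lemma's statement.
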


\begin{proof}
By Lemma \ref{h-function bdy} for sufficiently large $N$ we have $H(s_1,N)=H_1(s_1)$ and $H(N, s_2)=H_2(s_2)$ .
By \eqref{chi from H 2 comp} we get
\[
H(s_1,s_2)-H_1(s_1)-H_2(s_2)=H(s_1,s_2)-H(s_1,N)-H(N,s_2)=
\]
\[
-\sum_{\bm{s}'\succeq \bm{s}+\bm{1}}\chi(HFL^{-}(\L, \bm{s}')).
\]
Since $L_1$ and $L_2$ are unknots, we get $h(s_1,s_2)=H(s_1,s_2)-H_1(s_1)-H_2(s_2)$ and 
\[
\sum_{s_1,s_2}t_1^{s_1}t_2^{s_2}h(s_1,s_2)=-\sum_{s_{1}, s_{2}}\sum_{\bm{s}'\succeq \bm{s}+\bm{1}}t_1^{s_1}t_2^{s_2}\chi(HFL^{-}(\L, \bm{s}'))=
\]
\[
-\frac{t_1^{-1}t_2^{-1}}{(1-t_1^{-1})(1-t_2^{-1})}\sum_{\bm{s}'}t_1^{s'_1}t_2^{s'_2}\chi(HFL^{-}(\L, \bm{s}'))=
-\frac{t_1^{-1}t_2^{-1}}{(1-t_1^{-1})(1-t_2^{-1})}\widetilde{\Delta}(t_1,t_2).
\]
\end{proof}

\begin{example}
\label{wh H}
The (symmetric) Alexander polynomial of the Whitehead link equals 
\[
\Delta(t_1,t_2)=-(t_1^{1/2}-t_1^{-1/2})(t_2^{1/2}-t_2^{-1/2}),
\]
so
\[
\widetilde{\Delta}(t_1,t_2)=(t_1t_2)^{1/2}\Delta(t_1,t_2)=-(t_1-1)(t_2-1).
\]
The $H$-function has the following values:

\begin{center}
\begin{tikzpicture}
\draw (1,0)--(1,5);
\draw (2,0)--(2,5);
\draw (3,0)--(3,5);
\draw (4,0)--(4,5);
\draw (0,1)--(5,1);
\draw (0,2)--(5,2);
\draw (0,3)--(5,3);
\draw (0,4)--(5,4);
\draw (0.5,4.5) node {2};
\draw (1.5,4.5) node {1};
\draw (2.5,4.5) node {0};
\draw (3.5,4.5) node {0};
\draw (4.5,4.5) node {0};
\draw (0.5,3.5) node {2};
\draw (1.5,3.5) node {1};
\draw (2.5,3.5) node {0};
\draw (3.5,3.5) node {0};
\draw (4.5,3.5) node {0};
\draw (0.5,2.5) node {2};
\draw (1.5,2.5) node {1};
\draw (2.5,2.5) node {1};
\draw (3.5,2.5) node {0};
\draw (4.5,2.5) node {0};
\draw (0.5,1.5) node {3};
\draw (1.5,1.5) node {2};
\draw (2.5,1.5) node {1};
\draw (3.5,1.5) node {1};
\draw (4.5,1.5) node {1};
\draw (0.5,0.5) node {4};
\draw (1.5,0.5) node {3};
\draw (2.5,0.5) node {2};
\draw (3.5,0.5) node {2};
\draw (4.5,0.5) node {2};
\draw [->,dotted] (0,2.5)--(5,2.5);
\draw [->,dotted] (2.5,0)--(2.5,5);
\draw (5,2.7) node {$s_1$};
\draw (2.3,5) node {$s_2$};
\end{tikzpicture}
\end{center}
One can  check that \eqref{chi from H 2 comp} is satisfied for all $(s_1,s_2)$. Also, 
\[
h(s_1,s_2)=\begin{cases}
1\ \qquad\text{if}\ (s_1,s_2)=(0,0)\\
0\ \qquad\text{otherwise},\\
\end{cases}
\]
which agrees with \eqref{h from Alexander}.
\end{example}

\begin{lemma}
If for an L--space link $\L$ one has $h(0,0)=0$ then $\L$ is the unlink. 
\end{lemma}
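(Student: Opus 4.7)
The plan is to show in three stages that $h\equiv 0$, then that each component of $\L$ is an unknot, and finally that $\L$ itself is the unlink.

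First, I would combine the monotonicity, symmetry, and non-negativity of $h$ to deduce that $h\equiv 0$ on $\Z^2$. Lemma~\ref{lem: h increases} shows $h$ is non-decreasing toward the origin in each coordinate, and together with the symmetry $h(-\bm{s})=h(\bm{s})$ from \eqref{eq: h symmetry} this makes $(0,0)$ a global maximum. Since $h\geq 0$ (Corollary~\ref{h nonnegative}) and $h(0,0)=0$, the function $h$ must vanish identically, so $H_{\L}\equiv H_{O}$.

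Next, I would identify each component as an unknot. Applying the stabilization property (Lemma~\ref{h-function bdy}) with $s_1\to \infty$ gives $H_{L_2}(s_2)=H_{\L}(s_1,s_2)=H_{O}(s_1,s_2)=H_{\text{unknot}}(s_2)$, so $L_2$ has the same $H$-function as the unknot. Since sublinks of L--space links are L--space links (Theorem~\ref{l-space link cond}(a)), $L_2$ is itself an L--space knot. For an L--space knot $K$ one has $\nu^{+}(K)=\tau(K)=g(K)$, and the condition $V_0^{+}(L_2)=H_{L_2}(0)=0$ then forces $g(L_2)=0$, so $L_2$ is the unknot. By symmetry $L_1$ is also the unknot.

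With both components unknotted and linking number zero, formula~\eqref{h from Alexander} applies, and $h\equiv 0$ forces $\widetilde{\Delta}_{\L}(t_1,t_2)=0$. Because $\L$ is an L--space link whose $H$-function (and those of its sublinks) matches the unlink, the subcomplexes $\A^{-}(\L,\bm{s})$ have the same homological structure as those of $O$; in particular $\widehat{HFL}(\L)\cong \widehat{HFL}(O)$. Invoking a detection theorem for the unlink via link Floer homology (in the spirit of Ni's work) then yields that $\L$ is isotopic to the unlink.

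The principal obstacle is this last step: converting the vanishing of the Floer-theoretic invariants into the topological conclusion that $\L$ is the unlink. This relies on a non-trivial detection result. A possibly more elementary alternative would be to argue directly via the Manolescu--Ozsv\'ath surgery formula that, when $h\equiv 0$, the surgery complex for $\L$ is isomorphic to the surgery complex for the unlink at every framing, and then invoke the characterization of the unlink among L--space links with unknotted components suggested by the remark following Theorem~\ref{thm:taubound}.
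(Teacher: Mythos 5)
Your proof follows essentially the same route as the paper's: Lemma~\ref{lem: h increases} together with nonnegativity forces $h\equiv 0$, and the topological conclusion is then delegated to an unlink-detection result, which in the paper is supplied by the citation \cite[Theorem 1.3]{Liu} (exactly the ``non-trivial detection result'' you flag as the principal obstacle, proved there via the fact that the $H$-function of an L--space link determines $\widehat{HFL}$ and hence the Thurston norm). Your intermediate step showing each component is unknotted is correct but not needed once that detection theorem is invoked.
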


\begin{proof}
If $h(0,0)=0$ then by Lemma \ref{lem: h increases} we have $h(s_1,s_2)=0$ for all $s_1,s_2$. The rest of the proof follows from \cite[Theorem 1.3]{Liu}.
\end{proof}

For example, the H-function, and consequently $\widehat{HFL}$ and the Thurston norm of the link complement of an L--space link of two-components may be calculated from the Alexander polynomial, albeit with a nontrivial spectral sequence argument, as in \cite{LiuB2}.


\section{Surgery formula and truncations}
\label{sec:linksurgery}

\subsection{Surgery for knots}
\label{subsec:knots surgery}

In this subsection we review the ``mapping cone'' complex for knots \cite{OS:Integer}, and its finite rank truncation. We will present it in an algebraic and graphical form ready for generalization to links. 
Let $K$ be a knot in $S^3$ and let $p\in \Z$.

For each $s\in \Z$ we consider complexes $\A^0_s:=\A^{-}(K,s)$, and $\A^1_s=\A^{-}(\emptyset)$. The surgery complex is defined as
$$
\C=\prod_{s} \C_s,\ \C_s=\A^0_s+\A^1_s.
$$
The differential on $\C$ is induced by an internal differential $\Phi^\emptyset$ in $\A^0_s,\A^1_s$, and two types of chain maps, 
$\Phi^{+}_s:\A^0_s\to \A^1_s$, $\Phi^{-}_s:\A^0_s\to \A^1_{s+p}$.
Then $D_s=\left( \begin{array}{cc} \Phi^\emptyset & 0 \\ \Phi^{+}_s+\Phi^{-}_s & \Phi^\emptyset \end{array} \right)$. 
The complex $(\C, D)$ is usually represented with a zig-zag diagram in which we omit the internal differential $\Phi^\emptyset$,
\begin{equation}
\label{zigzag}
\xymatrix@C=15pt@R=12pt{
\cdots \ar@{.>}[dr]_{h} & \A^0_{-b}\ar@{.>}[d]_v \ar[dr]_h & \A^0_{-b+p}\ar[d]_v\ar[dr]_h  & \cdots\ar[dr]_h  &\A^0_{s} \ar[d]_v \ar[dr]_h & \A^0_{s+p} \ar[d]_v \ar[dr]_h & \cdots \ar[dr]_h & \A^0_{b} \ar[d]_v \ar@{.>}[dr]_{h} & \cdots\\
\cdots & \A^1_{-b} & \A^1_{-b+p} &\cdots & \A^1_{s} & \A^1_{s+p} & \ldots & \A^1_{b}&\cdots
}
\end{equation}
Here the vertical maps are given by $\Phi^{+}_s$ and the sloped maps by $\Phi^{-}_s$.
We instead present the complex $\C$ graphically as follows:
for each $s$ we represent $\C_s$ as a circle at a point $s$ containing two dots representing $\A^0_s$ and $\A^1_s$. The internal differential and $\Phi^{+}_s$ act within each circle, while $\Phi^{-}_s$ jumps between different circles. To avoid cluttering we do not draw the differentials in this picture. See Figure \ref{knot1}.

One can choose a sufficiently large positive integer $b$ such that for $s>b$ the map $\Phi^{+}_s$ is a quasi-isomorphism, and for $s<-b$ the map $\Phi^{-}_s$ is a quasi-isomorphism. The first condition means that we can erase all  circles (and all dots inside them) to the right of $b$ without changing the homotopy type of $\C$. The second condition is more subtle and depends on the sign of the surgery coefficient $p$. 

\begin{figure}[H]
\centering
\includegraphics[width=\textwidth]{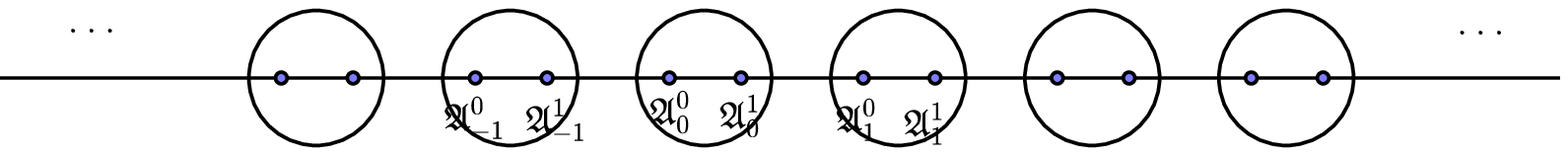}
\caption{The surgery complex $\C$ for a knot.}
\label{knot1}
\end{figure}

If $p>0$, we can use $\Phi^{-}_s$ to contract $\A^0_s$ with $\A^1_{s+p}$ for $s<-b$. By applying all these contractions at once, we erase all $\A^0_s$ for $s<-b$ and all $\A^1_{s+p}$ for $s<p-b$. As a result, graphically we will have a width $p$ interval $[-b,p-b)$ where each circle contains only $\A^0_s$, and a long interval $[p-b,b]$ where each circle contains both subcomplexes. See Figure \ref{knot2}.
\begin{figure}[H]
\centering
\includegraphics[width=\textwidth]{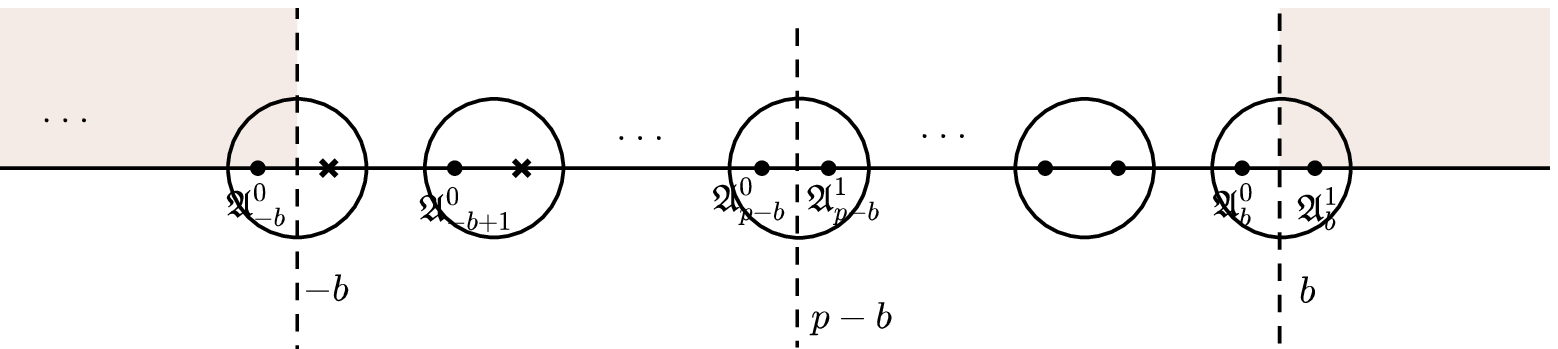}
\caption{The complex $\C$ after contraction when $p>0$. }
\label{knot2}
\end{figure}

If $p<0$, a similar argument shows that we will have a width $p$ interval $[p-b,-b)$ where each circle contains only $\A^1_s$, and a long interval $[-b,b]$ where each circle contains both subcomplexes. Note that in both cases in each $\spinc$ structure there is exactly one half-empty circle and a lot of full circles. Denote the truncated complex by $\C_b$. See Figure \ref{knot3}.
\begin{figure}[H]
\centering
\includegraphics[width=\textwidth]{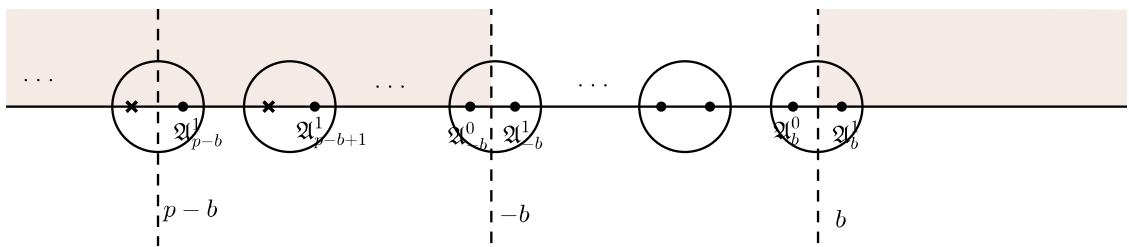}
\caption{The complex $\C$ after contraction when $p<0$. }
\label{knot3}
\end{figure}

Next, we would like to match $\A^0_s$ and $\A^1_s$ in $\C_b$ with the cells in a quotient or sub-complex $\CW(p,i,b)$ of a finite 1-dimensional CW complex. 
Each $\A^0_s$ corresponds to a 1-cell, and $\A^1_s$ to a 0-cell, and the boundary maps correspond to $\Phi^{\pm}_s$. 
The complexes corresponding to the previous two pictures are comprised of disjoint unions of $|p|$ intervals. 
Depending on the sign of $p$, each connected component is identified with one of the interval on the line subdivided by integer points pictured in Figure \ref{knot4}.

More specifically, for $p>0$ and each $\spinc$-structure $i$ (identified with a remainder modulo $|p|$), 
the complex $\CW(p,i,b)$ has one more 1-cell than 0-cell and can be identified with an open subdivided interval. We think of this as the closed subdivided interval $R$ with its two boundary cells $\partial R$ erased. The homology of $\CW(p,i,b)$ over $\F$ is $H_*(R, \partial R)\cong \F$, generated by the the sum of all 1-cells. 

For $p<0$ we have instead one more 0-cell than 1-cell. The complex $\CW(p,i,b)$ is now a closed interval $R$ with no boundary cells erased. The homology of $\CW(p,i,b)$ is $H_*(R,\emptyset) \cong \F$, generated by the class of a 0-cell.

\begin{figure}[H]
\centering
\includegraphics[width=3.8in]{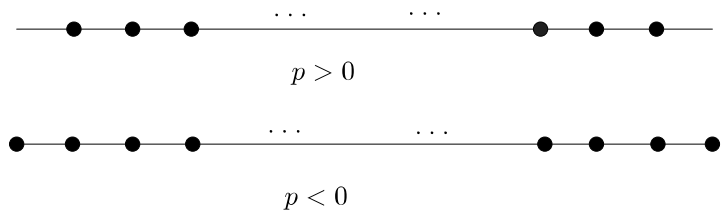}
\caption{The  complex $\CW(p, i, b)$.}
\label{knot4}
\end{figure}

So far, all of this is really just a rephrasing of the mapping cone formula of \cite{OS:Integer}. 
However, we will see that such pictures are easier to handle for more components, and the topology of the complexes $\CW(p,i,b)$ plays an important role. We will use this observation later in section \ref{subsec:dfromcells}.

\subsection{Truncation for 2-component L--space links }
\label{subsec:truncation}

We first review the Manolescu-Ozsv\'ath link surgery complex \cite{ManOzs} for oriented $2$-component  links $\L=L_{1}\cup L_{2}$ with vanishing linking number. Let $\mathcal{H}^{\L}=(\Sigma, \bm{\alpha}, \bm{\beta}, \bm{w}, \bm{z})$ be an admissible, generic, multi-pointed Heegaard diagram for $\L$.  Note that $\H(\L)\cong \Z^{2}$. 

For any sublink $M\subseteq \L$, set $N=\L-M$. We define a map 
\[
	\psi^{M}: \Z^{|\L|}\rightarrow \Z^{|N|}
\]
to be  the projection to the components corresponding to $L_{i}\subseteq N$. 
For sublinks $M\subseteq \L$, we use $\bH^{\L-M}$ to denote the Heegaard diagram of $\L-M$ obtained from $\bH^{\L}$ by forgetting the $z$ basepoints on the sublink $M$. The diagram $\bH^{\L-M}$ is associated with the generalized Floer complex $\A^{-}(\bH^{\L-M}, \psi^{M}(\bm{s})).$ 

In general, the surgery complex is complicated. For 2-component links with vanishing linking numbers, we describe the chain complex and its differential in detail. For the surgery matrix, we write  
\[
\Lambda=\begin{pmatrix}
p_{1} & 0 \\
0 & p_{2}
\end{pmatrix}.
\]
For a link $\L=L_{1}\cup L_{2}$, a two digit binary superscript is used to keep track of which link components are forgotten.
Let $\A^{00}_{\bm{s}}=\A^{-}(\bH^{\L}, \bm{s})$, $\A^{01}_{\bm{s}}=\A^{-}(\bH^{\L-L_{2}}, s_{1})$, $\A^{10}_{\bm{s}}=\A^{-}(\bH^{\L-L_{1}}, s_{2})$ and $\A^{11}_{\bm{s}}=\A^{-}(\bH^{\L-L_{1}-L_{2}}, \varnothing)$ where $\bm{s}=(s_{1}, s_{2})\in \Z^{2}$. Let 
$$\C_{\bm{s}}=\bigoplus_{\varepsilon_{1}, \varepsilon_{2}\in \lbrace 0, 1 \rbrace} \A^{\varepsilon_{1}\varepsilon_{2}}_{\bm{s}}.$$
The surgery complex is defined as
$$\C(\bH^{\L}, \Lambda)=\prod_{\bm{s}\in \Z^{2}} \C_{\bm{s}}.$$
The differential in the  complex is defined as follows. Consider sublinks $\varnothing, \pm L_{1}, \pm L_{2}$ and $\pm L_{1}\pm L_{2}$ where $\pm$ denotes whether or not the orientation of the sublink is the same as the one induced from  $\L$. Based on \cite{ManOzs}, 
we have the following maps, where $\Phi_{\bm{s}}^{\varnothing}$ is the internal differential on any chain complex $\A^{\varepsilon_{1}\varepsilon_{2}}_{\bm{s}}$. 
\begin{eqnarray}
\label{maps}
\begin{aligned}
\Phi^{L_{1}}_{\bm{s}}: \A^{00}_{\bm{s}}\rightarrow \A^{10}_{\bm{s}}, \quad &\Phi^{-L_{1}}_{\bm{s}}: \A^{00}_{\bm{s}}\rightarrow \A^{10}_{\bm{s}+\Lambda_{1}}, \\
\Phi^{L_{2}}_{\bm{s}}: \A^{00}_{\bm{s}}\rightarrow \A^{01}_{\bm{s}}, \quad &\Phi^{-L_{2}}_{\bm{s}}: \A^{00}_{\bm{s}}\rightarrow \A^{01}_{\bm{s}+\Lambda_{2}}, \\
\Phi^{L_{1}}_{s_{1}}: \A^{01}_{\bm{s}}\rightarrow \A^{11}_{\bm{s}}, \quad &\Phi^{-L_{1}}_{s_{1}}: \A^{01}_{\bm{s}}\rightarrow \A^{11}_{\bm{s}+\Lambda_{1}}, \\
\Phi^{L_{2}}_{s_{2}}: \A^{10}_{\bm{s}}\rightarrow \A^{11}_{\bm{s}}, \quad &\Phi^{-L_{2}}_{s_{2}}: \A^{10}_{\bm{s}}\rightarrow \A^{11}_{\bm{s}+\Lambda_{2}}, 
\end{aligned}
\end{eqnarray}
where $\Lambda_{i}$ is the $i$-th column of $\Lambda$.  
In addition, there are ``higher'' differentials 
\begin{eqnarray}
\label{higher maps}
\begin{aligned}
\Phi^{L_{1}+L_2}_{\bm{s}}: \A^{00}_{\bm{s}}\rightarrow \A^{11}_{\bm{s}}, \quad \Phi^{L_{1}-L_2}_{\bm{s}}: \A^{00}_{\bm{s}}\rightarrow \A^{11}_{\bm{s}+\Lambda_2}, \\
\Phi^{-L_{1}+L_2}_{\bm{s}}: \A^{00}_{\bm{s}}\rightarrow \A^{11}_{\bm{s}+\Lambda_1}, \quad \Phi^{-L_{1}-L_2}_{\bm{s}}: \A^{00}_{\bm{s}}\rightarrow \A^{11}_{\bm{s}+\Lambda_1+\Lambda_2}.
\end{aligned}
\end{eqnarray}
Let 
\[
	D_{\bm{s}}=\Phi^{\varnothing}_{\bm{s}}+\Phi^{\pm L_{1}}_{\bm{s}}+\Phi^{\pm L_{2}}_{\bm{s}}+\Phi^{\pm L_{1}}_{s_{1}}+\Phi^{\pm L_{2}}_{s_{2}}+\Phi^{\pm L_{1}\pm L_{2}}_{\bm{s}},
\]
and let $D=\prod_{\bm{s}\in \Z^{2}} D_{\bm{s}}$. Then $(\C(\bH^{\L}, \Lambda), D)$ is the Manolescu-Ozsv\'ath surgery complex. 

 The surgery complex naturally splits as a direct sum corresponding to the Spin$^c$-structures. The Spin$^c$-structures on $S^{3}_{\Lambda}(\L)$ are identified with $\H(\L)/ H(\L, \Lambda)\cong \Z_{p_1}\times \Z_{p_2}$ where $H(\L, \Lambda)$ is the subspace spanned  by $\Lambda$. For $\mft \in \H(\L)/ H(\L, \Lambda)$, choose $\bm{s}=(s_{1}, s_{2})$ corresponding with $\mft$ and let 
\[
	\C(\Lambda, \mft)=\bigoplus_{i, j\in \Z} \C_{\bm{s}+i\Lambda_{1}+j\Lambda_{2}}.
\]
Then by \cite{ManOzs}, 
\[
	HF^{-}(S^{3}_{\Lambda}(\L), \mft)\cong H_{\ast}(\C(\Lambda, \mft), D)
\]
up to some grading shift.

Now we review the truncation of the surgery complex $(\C(\bH^{\L}, \Lambda), D)$ \cite{ManOzs}, which mimics the truncation of the mapping cone for knots. 

\begin{lemma}\cite[Lemma 10.1]{ManOzs}
There exists a constant $b>0$ such that for any $i=1, 2$, and for any sublink $M\subset L$ not containing the component $L_{i}$, the chain map 
\[
	\Phi^{\pm L_{i}}_{\psi^{M}(\bm{s})}: \mathfrak{A}^{-}(\mathcal{H}^{L-M}, \psi^{M}(\bm{s}))\rightarrow \A^{-}(\bH^{L-M-L_{i}}, \psi^{M\cup L_{i}}(\bm{s}))
\]
induces an isomorphism on homology provided that either 
\begin{itemize}
\item $\bm{s}\in \Z^{2}$ is such that $s_{i}>b$, and $L_{i}$ is given the orientation induced from $L$; or 

\item $\bm{s}\in \Z^{2}$ is such that $s_{i}<-b$, and $L_{i}$ is given the orientation opposite to the one induced from $L$. 
\end{itemize}

\end{lemma}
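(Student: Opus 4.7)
The strategy is to reduce this statement to a stabilization / ``large surgery'' property already implicit in the knot case, and then promote the knot-level threshold to a uniform constant by using that there are only finitely many sublinks of $\L$ and only two components. In spirit, this is the link analogue of the standard fact that $v_s \colon A^-_s \to B^-_s$ is a quasi-isomorphism for $|s|$ sufficiently large, which underlies the truncation of the mapping cone described in Section \ref{subsec:knots surgery}.

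Concretely, the first step is to unpack the definition of $\Phi^{\pm L_i}_{\psi^{M}(\bm{s})}$ from \cite{ManOzs}. On chain-level generators, with $L_i$ carrying its induced orientation, $\Phi^{L_i}_{\psi^{M}(\bm{s})}$ is essentially the ``forget the $z$-basepoints on $L_i$'' map between the filtered complexes $\mathfrak{A}^-(\mathcal{H}^{\L-M}, \psi^{M}(\bm{s}))$ and $\mathfrak{A}^-(\mathcal{H}^{\L-M-L_i}, \psi^{M \cup L_i}(\bm{s}))$, while $\Phi^{-L_i}_{\psi^{M}(\bm{s})}$ forgets the $w$-basepoints on $L_i$; the true maps differ from these naive ones by holomorphic-triangle corrections. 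The second step is the key finiteness input: for a fixed admissible Heegaard diagram $\mathcal{H}^{\L-M}$ the set of intersection points is finite, hence the Alexander filtration $\mathcal{F}_i$ takes only finitely many values on generators. Let $b_{M,i}$ be any integer strictly greater than $\max_{\bm{x}} |\mathcal{F}_i(\bm{x})|$. Then for $s_i > b_{M,i}$ the $i$-th filtration constraint in $\mathfrak{A}^-(\mathcal{H}^{\L-M}, \psi^{M}(\bm{s}))$ is vacuous, and the source and target of $\Phi^{L_i}_{\psi^{M}(\bm{s})}$ are spanned by the same generators; a direct comparison (as in the proof of Lemma \ref{h-function bdy} and the energy-filtration argument of \cite[\S 4, \S 10]{ManOzs}) shows that the chain map is a quasi-isomorphism. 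The symmetric case $s_i < -b_{M,i}$ with the opposite orientation follows by swapping the roles of $w_i$ and $z_i$, equivalently by the $\mathrm{Spin}^c$-conjugation symmetry of the link Floer complex.

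The third step is to obtain a \emph{uniform} constant $b$. Because $\L$ has only two components, there are only finitely many sublinks $M \subset \L$, and hence finitely many pairs $(M, L_i)$ with $L_i \not\subset M$. Setting
\[
b \;:=\; \max_{(M,L_i)} b_{M,i}
\]
gives a single threshold that works simultaneously in both bullet points of the lemma.

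The main obstacle is the gap between the honest definition of $\Phi^{\pm L_i}_{\psi^{M}(\bm{s})}$ via holomorphic polygon counts and the naive ``forget a basepoint'' identification used above. In the stable range $s_i > b$ one must argue that the extra polygon contributions are either zero or chain-homotopic to the identity piece, which is done by exploiting positivity of intersections with the $w_i$ (resp.\ $z_i$) basepoints to rule out nonconstant disks crossing them, together with the induction on the number of components already built into the definition of the surgery complex in Section \ref{subsec:truncation}. This is precisely where one uses that the link has vanishing pairwise linking number, so that the $H$-functions of sublinks stabilize as in Lemma \ref{h-function bdy}, ensuring that the target complex really agrees with the source once $|s_i|$ exceeds $b$.
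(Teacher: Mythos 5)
The paper does not actually prove this statement: it is quoted verbatim as \cite[Lemma 10.1]{ManOzs}, so the only meaningful comparison is with the original proof there, and your outline does follow the same standard route. The core of your argument is right: admissibility gives finitely many generators, hence the Alexander multi-grading is bounded on $\mathcal{H}^{\L-M}$; once $s_i$ exceeds that bound the $i$-th filtration constraint in $\A^{-}(\bH^{\L-M},\psi^{M}(\bm{s}))$ is vacuous, so the inclusion into the ``$s_i=+\infty$'' complex is an isomorphism, while the remaining destabilization piece of $\Phi^{\pm L_i}$ is a quasi-isomorphism by standard invariance; the case $s_i<-b$ with reversed orientation is symmetric, and taking the maximum of the thresholds over the finitely many pairs $(M,L_i)$ gives a uniform $b$. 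This is exactly how Manolescu--Ozsv\'ath argue.

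One point in your last paragraph is off and worth correcting. The lemma holds for arbitrary links; the vanishing of the pairwise linking numbers is \emph{not} used here (in this paper it only serves to identify $\H(\L)$ with $\Z^n$ and simplify bookkeeping). Moreover, invoking Lemma \ref{h-function bdy} to ``ensure that the target complex agrees with the source'' is backwards: the stabilization of the $H$-function is a \emph{consequence} of the quasi-isomorphism statement you are proving (via the large surgery theorem), so using it as an input would be circular. The polygon-counting corrections are instead handled, as in \cite{ManOzs}, by the fact that the destabilization maps are chain homotopy equivalences independently of $\bm{s}$, so the only $\bm{s}$-dependence sits in the inclusion map, which your finiteness argument already controls.
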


\begin{figure}[H]
\centering
\includegraphics[width=\textwidth]{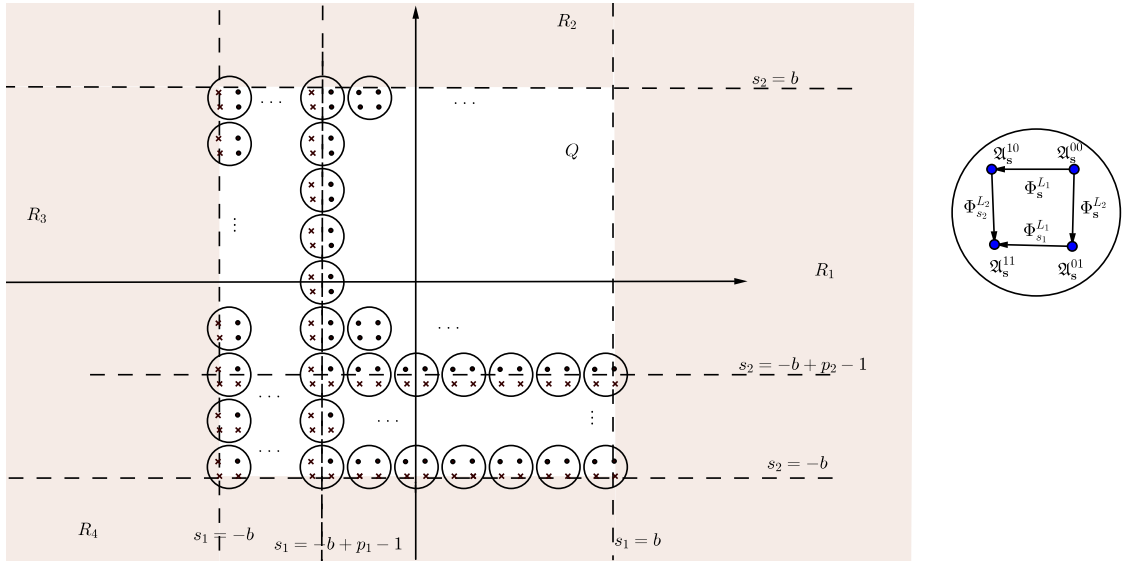}
\caption{ Truncated complex for $p_1, p_2>0$ \label{positive}}
\end{figure}

Without loss of generality, we will assume that
\[
b>\max(|p_1|,|p_2|).
\]
We consider five regions on the plane:
\[
Q=\{|s_1|\le b,|s_2|\le b\},\ R_1=\{s_1>b, s_2\leq b\},\ R_2=\{s_1\geq -b, s_2>b\},
\]
\[
 R_3=\{s_1<-b, s_2\geq -b\},\ R_4=\{s_1\leq b, s_2<-b\}.\  
\]

\begin{remark}
\label{rem: different truncation}
One can also use different constants $b_1,b_2$ to truncate the complex in vertical and in horizontal directions.
As a result, the rectangle $Q$ would be bounded by the lines $s_1=\pm b_1, s_2=\pm b_2$. All results below 
hold unchanged in this more general case.
\end{remark}

Depending on the signs of $p_{1}$ and $p_{2}$, the surgery complex may truncated as follows (see also the detailed case analysis of \cite[Section 10]{ManOzs}).

\textbf{Case 1}: $p_{1}>0, p_{2}>0$. 
In this case, let $\mathcal{C}_{R_{1}\cup R_{2}}$ be the subcomplex of $\C(\bH^{\L}, \Lambda)$ consisting of those terms $\A^{\varepsilon_{1}\varepsilon_{2}}_{\bm{s}}$ supported in $R_{1}\cup R_{2}$. The subcomplex $\mathcal{C}_{R_{1}\cup R_{2}}$ is acyclic \cite{ManOzs}. In the quotient complex $\C/\C_{R_{1}\cup R_{2}}$, define a subcomplex $\C_{R_{3}\cup R_{4}}$ consisting of those terms $\A_{\bm{s}}^{\varepsilon_{1}\varepsilon_{2}}$ with the property that $\bm{s}-\varepsilon_{1}\Lambda_{1}-\varepsilon_{2}\Lambda_{2}\in R_{3}\cup R_{4}$. Let $\C_{Q}$ be the quotient of $\C/\C_{R_{1}\cup R_{2}}$ by $\C_{R_3\cup R_4}$. Then $\C_{Q}$ is quasi-isomorphic to the original complex $\C(\bH^{\L}, \Lambda)$, and  $\C_{Q}$ consists of dots inside the box  indicated as in Figure \ref{positive}.

\begin{figure}[H]
\centering
\includegraphics[width=\textwidth]{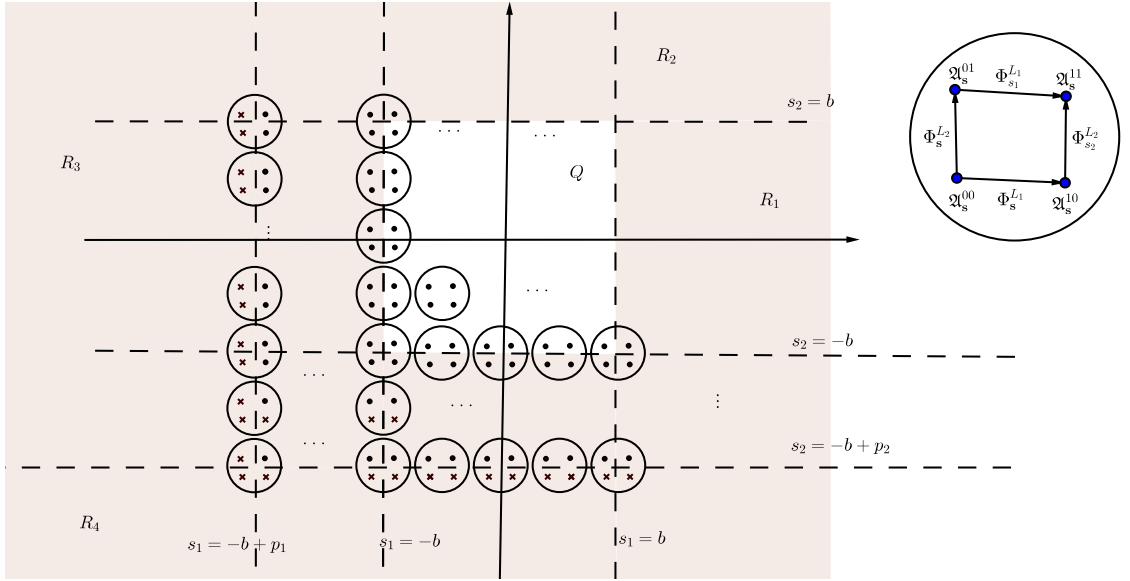}
\caption{Truncated complex for $p_1, p_2<0$ \label{negative}}
\end{figure}

\textbf{Case 2}: $p_{1}<0, p_{2}<0$.
This is similar to Case 1, except that $\C_{R_{1}\cup R_{2}}$ and $\C_{R_{3}\cup R_{4}}$ are now quotient complexes, and $\C_{Q}$ is a subcomplex as shown in Figure \ref{negative}. Note that $\C_Q$ contains all the solid dots pictured, including those outside of box $Q$.

\textbf{Case 3}: $p_{1}>0, p_{2}<0$.
First define two acyclic subcomplexes: one is $\C_{R_{1}}$, which consists of terms $\A_{\bm{s}}^{\varepsilon_{1}\varepsilon_{2}}$ such that  $\bm{s}\in R_{1}$. The other is $\C_{R_{3}}$, and consists of terms $\A_{\bm{s}}^{\varepsilon_{1}\varepsilon_{2}}$ such that either $\bm{s}-\varepsilon_{1}\Lambda_{1}\in R_{3}$ or $(\bm{s}\in R_{4}, \varepsilon_{2}=1$ and $\bm{s}-\varepsilon_{1}\Lambda_{1}-\Lambda_{2}\in R_{3})$. After quotienting by these acyclic subcomplexes, define two further acyclic quotient complexes $C_{R_{2}}$ consisting of $\A_{\bm{s}}^{\varepsilon_{1}\varepsilon_{2}}$ with $\bm{s}\in R_{2}$, and $\C_{R_{4}}$ consisting of $\A_{\bm{s}}^{\varepsilon_{1}\varepsilon_{2}}$ such that $\bm{s}-\varepsilon_{2}\Lambda_{2}\in R_{4}$. Let $\C_{Q}$ be the resulting subcomplex which is shown as in Figure \ref{mix}. The case where $p_{1}<0, p_{2}>0$ is similar.

\begin{figure}[H]
\centering
\includegraphics[width=\textwidth]{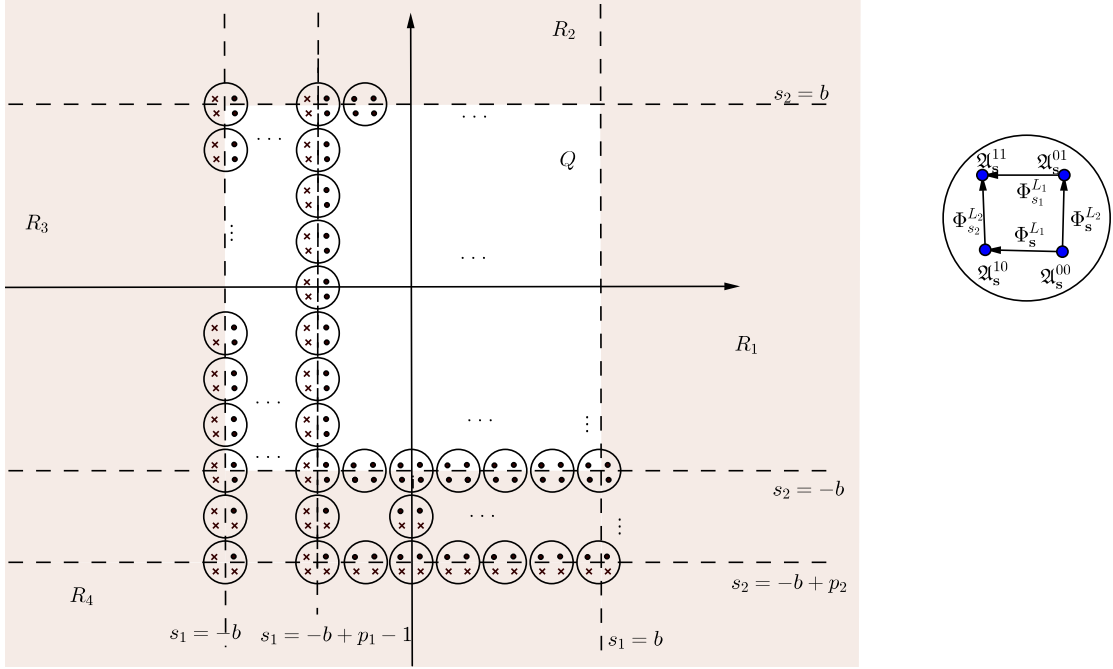}
\caption{Truncated complex for $p_1>0, p_2<0$.}
\label{mix}
\end{figure}

The truncated complex $\C_{Q}$ with the differential obtained by restricting $D$ to $\C_{Q}$ is homotopy equivalent to $(\C(\bH^{\L}, \Lambda), D)$. Then the homology of the truncated complex is isomorphic to $HF^{-}(S^{3}_{p_{1}, p_{2}}(\L))$ up to some grading shift which is independent of the link, but only depends on the homological data \cite{ManOzs}.

For L--space links, Y. Liu introduced the \emph{perturbed surgery formula} to compute the homology of the truncated complex. For the rest of the subsection, we let $\L=L_{1}\cup L_{2}$ denote a 2-component L--space link with vanishing linking number. By Theorem \ref{l-space link cond}, each sublink is also an L--space link. Then 
$$H_{\ast}(\A^{-}(\bH^{L-M}, \psi^{M}(\bm{s})))\cong \F[[U]]$$
 for all $\bm{s}\in \H(L)$ and all sublinks $M\subset L$ \cite{LiuY1, LiuY2}. Moreover, since 
 $\A^{-}(\bH^{L-M}, \psi^{M}(\bm{s}))$ is defined as a bounded complex of free finitely generated $\F[[U]]$--modules, and its homology is also free, $\A^{-}(\bH^{L-M}, \psi^{M}(\bm{s}))$  is homotopy equivalent to $\F[[U]]$.
 
Therefore the surgery complex is homotopy equivalent to the  \emph{perturbed surgery complex} where each $\A^{-}(\bH^{L-M}, \psi^{M}(\bm{s}))$ is replaced by $\F[[U]]$  with the zero differential. The maps $\Phi^{\overrightarrow{L_{i}}}_{\psi^{M}(\bm{s})}$ are replaced as follows:
 $$\tilde{\Phi}^{\pm L_{i}}_{\bm{s}}=U^{H(\pm s_{1}, \pm s_{2})-H_{\bar{i}}(\pm s_{\bar{i}})}: \F[[U]]\rightarrow \F[[U]], $$
 $$\tilde{\Phi}^{\pm L_{i}}_{s_{i}}=U^{ H_{i}(\pm s_{i})}: \F[[U]]\rightarrow \F[[U]]. $$
 Here $\bar{i}\in \lbrace 1, 2 \rbrace \setminus \lbrace i \rbrace$ and $H_{i}(s_{i})$ denotes the $H$-function for $L_{i}$, $i=1, 2$. Finally,  the ``higher'' differentials $\Phi^{\pm L_{1}\pm L_{2}}_{\bm{s}}$ are replaced by some
 differentials $\tilde{\Phi}^{\pm L_{1}\pm L_{2}}_{\bm{s}}$ which must vanish by parity reasons \cite[Lemma 5.6]{LiuY2}.
 
 We will denote the resulting perturbed truncated complex by $(\widetilde{\C_Q}, D)$.
Its homology is isomorphic to the Heegaard Floer  homology of  $S^3_{\p}(\L)$ \cite{ManOzs, LiuY2}. 
Because we are using truncated complexes from here on, it suffices to consider polynomials over $\F[U]$. 
\begin{remark}
Similar complexes and their truncations can be defined for any link with an arbitrary number of components and vanishing pairwise linking numbers. However, for general links with two components the higher differentials could be nontrivial. For L--space links with three or more components one can define the perturbed complex as above, 
but the higher differentials might survive in it as well. See also \cite{Lidman} for a discussion of associated spectral sequences.
\end{remark}

\subsection{Gradings}
\label{subsec:gradings}

In the above discussion we ignored the gradings on all the complexes involved in the surgery formula. 
The homological grading on the surgery complex consists of three separate parts: 
\begin{itemize}
\item[(a)]  The {\em Maslov grading} on $\A^{-}(\bH^{L-M}, \psi^{M}(\bm{s}))$ as a subcomplex of $\A^-(S^3)$.
\item[(b)]  The {\em shift} depending on $\bm{s}$ but not on $M$ (see Remark \ref{rem:shift}). 
\item[(c)]  The {\em cube degree} which we define as 2 for $\A^{00}_{\bm{s}}$, 1 for $\A^{01}_{\bm{s}}$ and $\A^{10}_{\bm{s}}$ and 0 for $\A^{11}_{\bm{s}}$. 
\end{itemize}
We will call the {\em internal degree} the sum of the first two parts and denote it by $\deg$. The homological degree is then the sum of the internal degree and the cube degree. The components of the differential in the surgery complex  change these degrees differently: $\Phi_{\bm{s}}^{\emptyset}$ decreases the internal degree by 1 and preserves the cube degree, 
$\Phi_{\bm{s}}^{\pm L_i}$ preserve the internal degree and  decrease the cube degree by 1, and $\Phi_{\bm{s}}^{\pm L_1\pm L_2}$ increase the internal degree by 1 and  decrease the cube degree by 2. 
The action of $U$ decreases the internal degree by 2 and preserves the cube degree. 
Note that after perturbation of the surgery complex, the only non-vanishing differentials are the $\Phi_{\bm{s}}^{\pm L_i}$, which preserve the internal degree.

\begin{remark}
Note that our cube degrees (shifts applied to cells) differ from the shifts in the Ozsv\'ath-Szab\'o mapping cone formula. In is important to note that the calculations we do with our surgery complex are not absolutely graded, but depend on an overall shift calculated from a two-component unlink. 
\end{remark}

For L--space links, we can replace $\A^{-}(\bH^{L-M}, \psi^{M}(\bm{s}))$ by a copy of $\F[U]$, the internal degree in it is completely determined by the internal degree of the generator. 
For $M\subseteq \{1,2\}$ let $z_M(\bm{s})$ denote the 
generator in the homology of $\mathfrak{A}^{-}(\mathcal{H}^{L-M}, \psi^{M}(\bm{s}))$. 
By the above, in the perturbed surgery complex the differential preserves the internal degree and  decreases the cube degree by 1.

\begin{proposition}
\label{gradingchange}
The internal degrees of $z_{M}(\bm{s})$ can be expressed via the internal degrees of $z_{1,2}(\bm{s})$ as following:
\begin{equation}
\label{deg z1 and z2}
\deg z_{1}(\bm{s})=\deg z_{1,2}(\bm{s}) - 2H_2(s_2),\ \deg z_{2}(\bm{s})=\deg z_{1,2}(\bm{s}) - 2H_1(s_1),
\end{equation}
\begin{equation}
\label{deg z0}
\deg z_{\emptyset}(\bm{s})=\deg z_{1,2}(\bm{s}) - 2H(s_1,s_2).
\end{equation}
Also, the internal degrees of $z_{1,2}(\bm{s})$ satisfy the following recursive relations:
\begin{equation}
\label{grading change 1}
\deg z_{1,2}(s_1+p_1,s_2)=\deg z_{1,2}(s_1,s_2)+2s_1,
\end{equation}
\begin{equation}
\label{grading change 2}
\deg z_{1,2}(s_1,s_2+p_2)=\deg z_{1,2}(s_1,s_2)+2s_2.
\end{equation}
\end{proposition}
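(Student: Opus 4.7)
The plan is to reduce both sets of equations to the definition of the internal degree, using two structural inputs recorded in Section~\ref{subsec:gradings}: the $\bm{s}$-dependent shift is independent of the sublink $M$, and the perturbed chain maps $\tilde{\Phi}^{\pm L_i}$ preserve the internal degree.

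First I would write $\sigma(\bm{s})$ for the shift from item~(b) of Section~\ref{subsec:gradings}. Since the top generator of $H_\ast(\A^-(\bH^{L-M}, \psi^M(\bm{s})))$ sits in Maslov degree $-2H_{L-M}(\psi^M(\bm{s}))$ by Definition~\ref{Hfunction}, in the L--space setting we have
\[
\deg z_M(\bm{s}) = -2H_{L-M}(\psi^M(\bm{s})) + \sigma(\bm{s}).
\]
Taking $M = \{1,2\}$ gives $\sigma(\bm{s}) = \deg z_{1,2}(\bm{s})$ since $H_\emptyset = 0$. Substituting back for $M = \{1\}, \{2\}, \emptyset$ and using $H_{L_i} = H_i$ and $H_L = H$ produces \eqref{deg z1 and z2} and \eqref{deg z0} immediately.

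For the recursions, I would exploit the degree-preserving perturbed map
\[
\tilde{\Phi}^{-L_1}_{\bm{s}} = U^{H(-s_1,-s_2) - H_2(-s_2)} \colon \A^{00}_{\bm{s}} \to \A^{10}_{\bm{s}+\Lambda_1}.
\]
The source generator $z_\emptyset(\bm{s})$ has internal degree $\deg z_{1,2}(\bm{s}) - 2H(s_1,s_2)$ by \eqref{deg z0}, while its image $U^{H(-s_1,-s_2) - H_2(-s_2)}\,z_1(\bm{s}+\Lambda_1)$ has internal degree
\[
\deg z_{1,2}(s_1+p_1,s_2) - 2H_2(s_2) - 2\bigl(H(-s_1,-s_2) - H_2(-s_2)\bigr)
\]
by \eqref{deg z1 and z2}. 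The symmetry of Lemma~\ref{h-function symmetry} rewrites $H(-s_1,-s_2) - H_2(-s_2)$ as $H(s_1,s_2) - H_2(s_2) + s_1$, and equating the source and target degrees yields \eqref{grading change 1}. The identical argument applied to $\tilde{\Phi}^{-L_2}_{\bm{s}}$ delivers \eqref{grading change 2}.

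The main obstacle is conceptual rather than computational: it lies in justifying the two structural inputs, namely that the shift is genuinely $M$-independent and that the perturbation is calibrated to preserve the internal grading exactly, rather than to shift it by an $M$-dependent constant. Both facts are asserted in Section~\ref{subsec:gradings}; once they are granted, the proof reduces to the short symmetry computation above.
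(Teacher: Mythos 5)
Your proof is correct, and it differs from the paper's in a way worth noting. For \eqref{deg z1 and z2} and \eqref{deg z0} the paper does not appeal to the $M$-independence of the shift directly; instead it writes out the perturbed differential explicitly (e.g.\ $D(z_{1}(\bm{s}))=U^{H_2(s_2)}z_{1,2}(\bm{s})+U^{H_2(-s_2)}z_{1,2}(s_1,s_2+p_2)$, and similarly for $z_\emptyset$ and $z_2$) and reads the degree relations off from the fact that $D$ preserves internal degree. Your route --- writing $\deg z_M(\bm{s})=-2H_{L-M}(\psi^M(\bm{s}))+\sigma(\bm{s})$ and pinning down $\sigma(\bm{s})=\deg z_{1,2}(\bm{s})$ via $H_\emptyset=0$ --- is a legitimate and arguably more direct use of Definition \ref{Hfunction} together with item (b) of Section \ref{subsec:gradings}; the two structural inputs you flag are exactly what the paper asserts there, so nothing is missing. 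For the recursions the mechanism is the same as the paper's (equate internal degrees across a degree-preserving component of $D$ and apply Lemma \ref{h-function symmetry}), but you extract \eqref{grading change 1} from the $2$-cell-to-$1$-cell map $\tilde{\Phi}^{-L_1}_{\bm{s}}\colon\A^{00}_{\bm{s}}\to\A^{10}_{\bm{s}+\Lambda_1}$, whereas the paper uses the shorter $1$-cell-to-$0$-cell computation, equating the degrees of the two terms $U^{H_1(s_1)}z_{1,2}(\bm{s})$ and $U^{H_1(-s_1)}z_{1,2}(s_1+p_1,s_2)$ in $D(z_2(\bm{s}))$; your version needs the extra cancellation of $H(\bm{s})$ but the symmetry computation $H(-\bm{s})-H_2(-s_2)=H(\bm{s})-H_2(s_2)+s_1$ is right and the conclusion follows. (Incidentally, your exponent $H(-s_1,-s_2)-H_2(-s_2)$ is the correct one from the general formula for $\tilde\Phi^{\pm L_i}_{\bm{s}}$; the displayed differential in the paper's proof contains a typo $H_2(-s_1)$ at that spot.)
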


\begin{remark}
\label{rem:shift}
The {\em shift} mentioned in the beginning of this subsection is nothing but $\deg z_{1,2}(\bm{s})$.
\end{remark}

\begin{proof}
The differential has the following form:
\begin{eqnarray*}
	D (z_{\emptyset}(\bm{s}))&=&U^{H(\bm{s})-H_1(s_1)}z_{2}(s_1,s_2)+U^{H(\bm{s})-H_2(s_2)}z_{1}(s_1,s_2)+ \\
 && U^{H(-\bm{s})-H_1(-s_1)}z_{2}(s_1,s_2+p_2)+U^{H(-\bm{s})-H_2(-s_1)}z_{1}(s_1+p_1,s_2), \\
 	D (z_{2}(s_1,s_2)) &=& U^{H_1(s_1)}z_{1,2}(s_1,s_2)+U^{H_1(-s_1)}z_{1,2}(s_1+p_1,s_2), \\
	D (z_{1}(s_1,s_2)) &=& U^{H_2(s_2)}z_{1,2}(s_1,s_2)+U^{H_2(-s_2)}z_{1,2}(s_1,s_2+p_2),\\
	D (z_{1,2}(s_1,s_2)) &=&0.
\end{eqnarray*}
The differential preserves the internal degree, therefore $\deg z_{1}(s)=\deg z_{1,2}(s)-2H_2(s_2)$ and $\deg z_{\emptyset}(s)=\deg z_{1}(s)-2(H(\bm{s})-H_2(s_2))$.
By Lemma \ref{h-function symmetry},  $H_1(-s_1)=H_1(s_1)+s_1$, $H_2(-s_2)=H_2(s_2)+s_2$. Therefore
\[
-2H_1(s_1)+\deg z_{1,2}(s_1,s_2)=-2H_1(-s_1)+\deg z_{1,2}(s_1+p_1,s_2)=
\]
\[
-2H_1(s_1)-2s_1+\deg z_{1,2}(s_1+p_1,s_2),
\]
which implies \eqref{grading change 1} and \eqref{grading change 2}.
\end{proof}

\subsection{Associated CW complex}
\label{subsec:CW}

Observe from the definition of the iterated cone, we may assign each summand of $\C_Q$ with the cells of a quotient or sub-complex $\CW(\bm{p}, \bm{i}, \bm{b})$ of 
a finite rectangular CW complex $R$, in a similar manner as was done for knots. In particular, each $\A^{00}_{\bm{s}}$ corresponds to a 2-cell, each of $\A^{01}_{\bm{s}}$ and $\A^{10}_{\bm{s}}$ to a 1-cell, and $\A^{11}_{\bm{s}}$ to a 0-cell, with boundary maps specified by \eqref{maps}. For example, the following diagram shows the 2-cell corresponding with $\A^{00}_{\bm{s}}$ when $p_1, p_2>0$. 
\begin{equation}
\xymatrix@C=30pt@R=30pt{
\A^{11}_{\bm{s}+\Lambda_2} & \A^{01}_{\bm{s}+\Lambda_2} \ar[l]_{\Phi^{L_1}} \ar[r]^{\Phi^{-L_1}} & \A^{11}_{\bm{s}+\Lambda_1 +\Lambda_2} \\
\A^{10}_{\bm{s}} \ar[d]^{\Phi^{L_2}} \ar[u]_{\Phi^{-L_2}}& \A^{00}_{\bm{s}} \ar[d]^{\Phi^{L_2}} \ar[u]_{\Phi^{-L_2}}\ar[l]_{\Phi^{L_1}}\ar[r]^{\Phi^{-L_1}} & \A^{10}_{\bm{s}+\Lambda_1} \ar[d]^{\Phi^{L_2}} \ar[u]_{\Phi^{-L_2}}\\
\A^{11}_{\bm{s}} & \A^{01}_{\bm{s}}  \ar[l]_{\Phi^{L_1}}\ar[r]^{\Phi^{-L_1}}& \A^{11}_{\bm{s}+\Lambda_1} \\
}
\end{equation}

In all of the cases of the truncation, the resulting  complex $\CW(\bm{p}, \bm{i}, \bm{b})$ will be a rectangle  on a square lattice, possibly with some parts of the boundary erased.
The squares, edges and vertices are all cells in this complex. 

\begin{figure}[H]
\centering
\includegraphics[width=\textwidth]{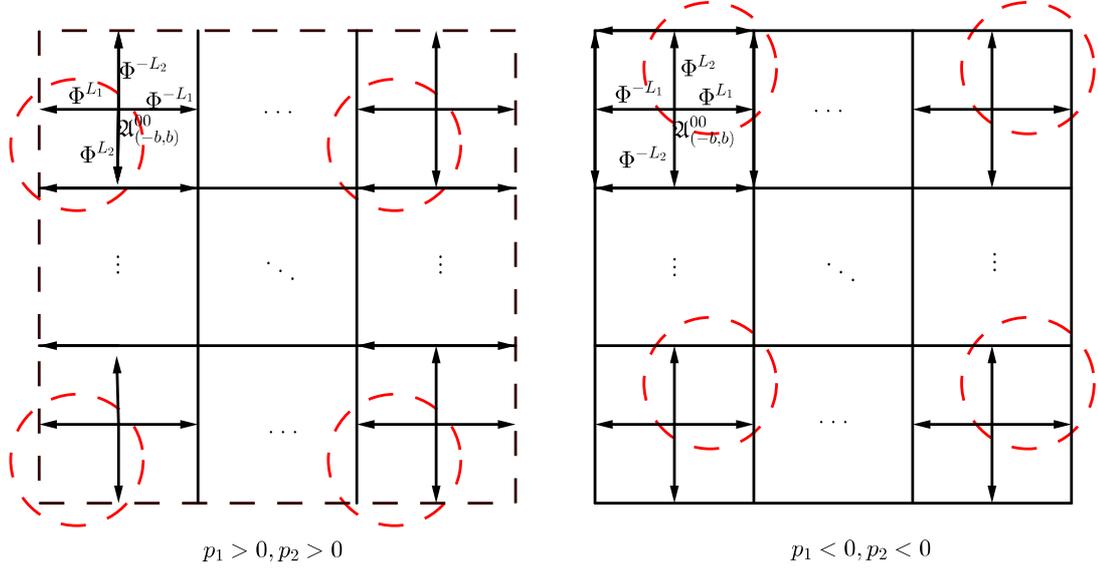}
\caption{Cases (a) and (b).}
\label{surgery 1}
\end{figure}

We can consider the corresponding chain complex $C$ over $\F$ generated by these cells
and the usual differential $\partial$. The homology of this complex is naturally isomorphic to the homology of $R$ relative to the union of erased cells. Specifically, we will consider three situations:

\begin{figure}[H]
\centering
\includegraphics[width=2.5in]{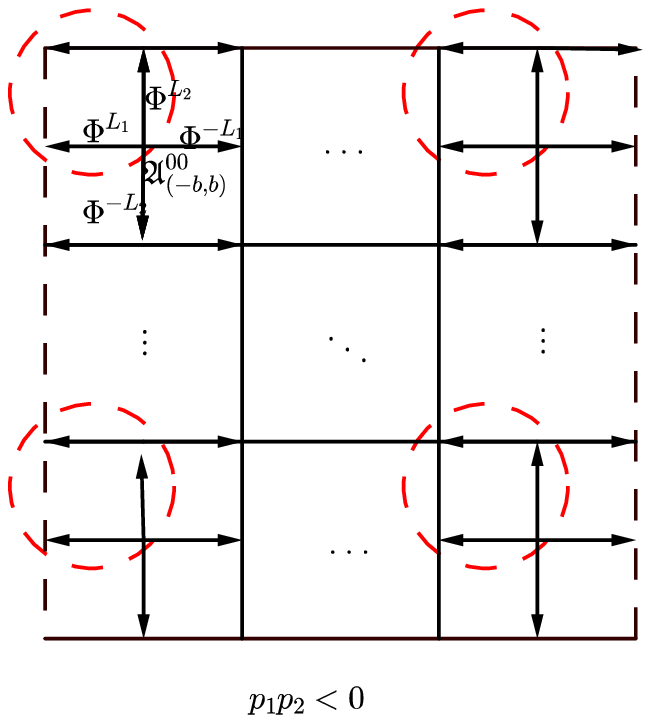}
\caption{Case (c). }
\label{surgery 2}
\end{figure}

\begin{itemize}
\item[(a)] If none of the cells are erased, then $R$ is contractible, so $H_0(C,\partial)\cong \F$ is generated by the class of a 0-cell, and all other homologies vanish. This corresponds to the case when both surgery coefficients are  negative as in Figure \ref{surgery 1}.
\item[(b)] If all 1- and 0-cells on the boundary of $R$ are erased, then $(R,\partial R)\simeq (S^2,pt)$. Therefore $H_2(C,\partial)\cong\F$ is generated by the sum of all 2-cells, and all other homologies vanish. This corresponds to the case when both surgery coefficients are  positive.
\item[(c)] If all 1- and 0-cells on a pair of opposite sides of $R$ are erased, then $R$ relative to erased cells is homotopy eqivalent to $(S^1,pt)$. Therefore $H_1(C,\partial)\cong \F$ is generated by the class of any path connecting opposite erased boundaries, and all other homologies vanish. This corresponds to the case when the surgery coefficients have different signs as in Figure \ref{surgery 2}.
\end{itemize}


\section{The $d$-invariant of surgery}
\label{subsec:dfromcells}

\subsection{$d$-invariant from cells}
Given the CW complex $\CW(\bm{p},\bm{i}, \bm{b})$ in Section \ref{subsec:CW}, we can reconstruct the (perturbed, truncated) surgery complex $(\widetilde{\C_Q},D)$  as follows. Each cell $\sq$ of  $\CW(\bm{p},\bm{i}, \bm{b})$ corresponds to a copy of $\F[U]$ generated by some element $z(\sq)$.
It has some internal degree which we will denote by $\deg(\sq)$. Every component of the boundary map in 
$\CW(\bm{p},\bm{i}, \bm{b})$ corresponds to a  component of $D$. By \cite{LiuY2}, $D$ is nonzero and hence given by multiplication by a certain power of $U$.  
By Proposition \ref{gradingchange} the internal degrees $\deg(\sq)$ have the same parity and $\deg(\sq_i)\geq \deg(\sq)$ if $\sq_i$ shows up in the differential of $\sq$. We get the following equation:
\begin{equation}
D(z(\sq))=\sum U^{\frac{1}{2}(\deg(\sq_i)-\deg(\sq))}z(\sq_i),\ \text{if}\ \partial \sq=\sum \sq_i.
\end{equation}
As above, the complex $(\widetilde{\C_Q},D)$  is bigraded: the {\em cube grading} of $z(\sq)U^{k}$ equals the dimension of $\sq$, while the internal degree of $z(\sq)U^k$ equals $\deg(\sq)-2k$. The differential $D$ preserves the internal degree and decreases the cube grading by 1. The actual homological grading on the surgery complex is the sum of two degrees.

The homology of $(\widetilde{\C_Q},D)$  could be rather complicated,  and is similar to the so-called lattice homology considered by N\'emethi \cite{Nemethi}. Nevertheless, the homology of  $(\widetilde{\C_Q},D)$   modulo $U$-torsion can be described explicitly. Let $(C,\partial)$ denote the chain complex computing the cellular homology of $CW(\bm{p},\bm{i}, \bm{b})$. Consider the map $$\varepsilon: \widetilde{\C_Q}\to C, \quad  \varepsilon(z(\sq)U^k)=\sq.$$ Clearly, $\varepsilon$ is a chain map, that is, $\partial \varepsilon=\varepsilon D$. Given a cell $\sq$, we call $z(\sq)U^k$ its graded lift of internal degree $\deg(\sq)-2k$. The following proposition is straightforward.

\begin{proposition}
\label{graded lift unique}
Let $c$ be a chain in $C$. It admits a graded lift of internal degree $N$ (that is, a homogeneous chain $\alpha$ in 
$\widetilde{\C_Q}$ such that $\varepsilon(\alpha)=c$) if and only if $N$ is less than or equal to the minimal internal degree of cells in $c$. If a graded lift exists, it is unique. Any two graded lifts of different internal degrees are related by a factor $U^k$ for some $k$.
\end{proposition}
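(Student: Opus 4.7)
The plan is to exploit the fact that $\widetilde{\C_Q}$ decomposes as a direct sum of free $\F[U]$-modules indexed by cells of $\CW(\bm{p},\bm{i},\bm{b})$, so the statement reduces cell-by-cell to a trivial claim about homogeneous elements in $\F[U]$. More precisely, as an $\F$-vector space,
\[
\widetilde{\C_Q}=\bigoplus_{\sq}\F[U]\cdot z(\sq),
\]
and the map $\varepsilon$ sends each summand $\F[U]\cdot z(\sq)$ onto $\F\cdot\sq$ by killing all powers of $U$. Within a single summand, the homogeneous elements are precisely $U^{k}z(\sq)$ for $k\ge 0$, with internal degree $\deg(\sq)-2k$.

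For existence, I would argue as follows. Write $c=\sum_{i}\sq_{i}$ in $C$. By Proposition \ref{gradingchange}, all $\deg(\sq_{i})$ share a common parity, and I take $N$ of that same parity with $N\le\min_{i}\deg(\sq_{i})$. Set $k_{i}=(\deg(\sq_{i})-N)/2\in\Z_{\ge 0}$ and define
\[
\alpha=\sum_{i}U^{k_{i}}z(\sq_{i}).
\]
Then each summand $U^{k_{i}}z(\sq_{i})$ has internal degree $N$, so $\alpha$ is homogeneous of internal degree $N$, and $\varepsilon(\alpha)=\sum_{i}\sq_{i}=c$. Conversely, if some cell $\sq_{i}$ appearing in $c$ satisfies $\deg(\sq_{i})<N$, then no element of $\F[U]\cdot z(\sq_{i})$ has internal degree $N$ (the maximal internal degree there is $\deg(\sq_{i})$), so the $\sq_{i}$-component of any homogeneous lift of internal degree $N$ would have to vanish, contradicting $\varepsilon(\alpha)=c$.

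For uniqueness at a fixed internal degree $N$, any homogeneous lift $\alpha$ decomposes uniquely as $\alpha=\sum_{\sq}\alpha_{\sq}$ with $\alpha_{\sq}\in\F[U]\cdot z(\sq)$. The condition $\varepsilon(\alpha)=c$ forces $\alpha_{\sq}$ to be a homogeneous element of $\F[U]\cdot z(\sq)$ whose image under $\varepsilon$ is the coefficient of $\sq$ in $c$; this pins down $\alpha_{\sq}=U^{(\deg(\sq)-N)/2}z(\sq)$ if $\sq$ occurs in $c$ and $\alpha_{\sq}=0$ otherwise. Finally, for two graded lifts $\alpha,\alpha'$ of respective internal degrees $N\le N'$, applying uniqueness cell-by-cell yields $\alpha_{\sq}=U^{(N'-N)/2}\alpha'_{\sq}$, hence $\alpha=U^{(N'-N)/2}\alpha'$.

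There is no serious obstacle here; the only point that requires the earlier development is the parity statement from Proposition \ref{gradingchange} (which is what makes the exponents $k_{i}$ genuine integers), and the absence of relations between the generators $z(\sq)$ for different cells, which is built into the construction of the perturbed complex.
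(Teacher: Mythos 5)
Your proof is correct and is exactly the cell-by-cell argument one would expect; the paper itself simply declares this proposition ``straightforward'' and records no proof, so there is nothing to diverge from. Your handling of the parity issue via Proposition \ref{gradingchange} (making the exponents $k_i$ integers) is the one point worth making explicit, and you do.
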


\begin{lemma}
Let $z$ be a homogeneous chain in $\widetilde{\C_Q}$. Then $z$ is a cycle if and only if $\varepsilon(z)$ is a cycle. Also, $U^{k}z$ is a boundary for large enough $k$ if and only if $\varepsilon(z)$ is a boundary.
\end{lemma}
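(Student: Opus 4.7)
The proof hinges on two ingredients already in place: the chain map identity $\partial\varepsilon=\varepsilon D$, and the uniqueness of graded lifts from Proposition \ref{graded lift unique}. The key preliminary observation is that because $D$ preserves internal degree and $\varepsilon$ sends a homogeneous chain to a cellular chain of the ``matching'' internal degree, any two homogeneous elements of $\widetilde{\C_Q}$ of the same internal degree that have the same $\varepsilon$-image must coincide — this is just Proposition \ref{graded lift unique} applied with $c$ equal to their common image. I would also note at the outset that $\varepsilon(U\cdot x)=\varepsilon(x)$ since $\varepsilon(z(\sq)U^k)=\sq$ is independent of $k$.

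For the first claim, the forward direction is immediate: $Dz=0$ forces $\partial\varepsilon(z)=\varepsilon(Dz)=0$. Conversely, if $\varepsilon(z)$ is a cycle, then $Dz$ is homogeneous (because $D$ preserves internal degree) and satisfies $\varepsilon(Dz)=\partial\varepsilon(z)=0$. Applying uniqueness of graded lifts to $c=0$ forces $Dz=0$.

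For the second claim, the forward direction is again quick: if $U^kz=Dw$, then
\[
\varepsilon(z)=\varepsilon(U^kz)=\varepsilon(Dw)=\partial\varepsilon(w),
\]
so $\varepsilon(z)$ is a boundary. For the converse, assume $\varepsilon(z)=\partial c$ for some cellular chain $c$. I would choose an integer $M$ having the same parity as $\deg z$ and at most the minimum internal degree of the cells appearing in $c$; Proposition \ref{graded lift unique} then produces a unique graded lift $w$ of $c$ of internal degree $M$. Setting $k=(\deg z-M)/2\ge 0$, both $Dw$ and $U^kz$ are homogeneous of internal degree $M$, and
\[
\varepsilon(Dw)=\partial c=\varepsilon(z)=\varepsilon(U^kz).
\]
Uniqueness of graded lifts then yields $Dw=U^kz$, so $U^kz$ is a boundary.

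The only subtlety is the bookkeeping in the last step: one must check that all internal degrees of generators $z(\sq)$ share a common parity, which is ensured by the recursive formulas of Proposition \ref{gradingchange}, so that the integer $k=(\deg z-M)/2$ can be taken to be a non-negative integer for $M$ chosen sufficiently negative. Beyond this parity/existence check, the argument is a direct double application of the uniqueness of graded lifts.
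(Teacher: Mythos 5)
Your proof is correct and follows essentially the same route as the paper's: both directions of each claim rest on the chain map identity $\partial\varepsilon=\varepsilon D$ together with the uniqueness of graded lifts from Proposition \ref{graded lift unique}, and your converse for the boundary statement (lift $c$ to a homogeneous chain $w$ of low enough internal degree, then match $Dw$ against $U^kz$ by uniqueness) is exactly the paper's argument. The only difference is that you spell out the parity bookkeeping for $k=(\deg z-M)/2$, which the paper leaves implicit.
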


\begin{proof}
If $z$ is a cycle then  $\varepsilon(z)$ is a cycle since $\varepsilon$ is a chain map. Conversely, if $\varepsilon(z)$ is a cycle, then $\varepsilon(D(z))=0$, and hence $D(z)=0$.

If $U^{k}z=D \alpha$ then by applying $\varepsilon$ we get $\varepsilon(z)=\partial \varepsilon(\alpha)$. Conversely, assume that $\varepsilon(z)=\partial \beta$. Pick a graded lift $\alpha$ of internal degree $N$ such that  
$\varepsilon(\alpha)=\beta$. Then $\varepsilon(D \alpha)=\varepsilon(z)$, so $D\alpha$ is a graded lift of $z$. By Proposition \ref{graded lift unique} we have $D \alpha=U^{\frac{1}{2}(\deg(z)-N)}z$.
\end{proof}

\begin{corollary}
The free part of the homology $H_*(\widetilde{\C_Q},D)/Tors$ is generated by the graded lifts of representatives of homology classes in $H_*(C,\partial)$. Two classes are equivalent if and only if they have the same internal degree and lift the same homology class.
\end{corollary}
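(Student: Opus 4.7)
The plan is to deduce the corollary directly from the preceding lemma and Proposition \ref{graded lift unique}, by exhibiting a bijection between homogeneous non-torsion classes in $H_*(\widetilde{\C_Q},D)/\mathrm{Tors}$ and pairs consisting of a non-zero class in $H_*(C,\partial)$ together with a compatible internal degree.

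First I would check that $\varepsilon$ descends to a well-defined map $\varepsilon_*: H_*(\widetilde{\C_Q},D)/\mathrm{Tors} \to H_*(C,\partial)$. Since $\varepsilon$ is a chain map it sends cycles to cycles. Moreover, if $z$ is a cycle whose class is $U$-torsion, say $U^k z = D\alpha$, then applying $\varepsilon$ and using the fact (visible from the definition $\varepsilon(z(\sq)U^k)=\sq$) that $\varepsilon$ kills the $U$-action gives $\varepsilon(z) = \varepsilon(U^k z) = \partial\varepsilon(\alpha)$, so the class $[\varepsilon(z)]$ vanishes in $H_*(C,\partial)$. Hence $\varepsilon_*$ is well defined on the torsion-free quotient.

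Next I would produce non-torsion classes from $H_*(C,\partial)$. Given a non-trivial class $[c]$, pick a cycle representative $c$ and any graded lift $\tilde c$ (which exists by Proposition \ref{graded lift unique}); by the preceding lemma $\tilde c$ is a cycle. If some $U^k[\tilde c]$ vanished in homology, the lemma would force $c=\varepsilon(\tilde c)$ to be a boundary in $C$, contradicting $[c]\neq 0$. Conversely, any homogeneous non-torsion cycle $z\in \widetilde{\C_Q}$ is tautologically a graded lift of $\varepsilon(z)$, and by the lemma the class $[\varepsilon(z)]\in H_*(C,\partial)$ is non-zero (otherwise $U^k z$ would be a boundary for large $k$). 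Thus the graded lifts of representatives of non-trivial cellular homology classes exhaust all homogeneous non-torsion classes.

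Finally, for the equivalence statement I would argue both directions. Suppose $\tilde c_1$ and $\tilde c_2$ are graded lifts of the same internal degree lifting the same cellular homology class. Then $\varepsilon(\tilde c_1 - \tilde c_2) = \partial\beta$ for some $\beta\in C$, so by the preceding lemma $U^k(\tilde c_1-\tilde c_2) = D\gamma$ for some $k$ and some $\gamma\in \widetilde{\C_Q}$, and therefore $[\tilde c_1]=[\tilde c_2]$ in $H_*(\widetilde{\C_Q},D)/\mathrm{Tors}$. Conversely, if two homogeneous non-torsion classes agree in $H_*/\mathrm{Tors}$, they must have a common internal degree (both being non-zero homogeneous elements of an internally graded torsion-free module) and they must map to the same class under $\varepsilon_*$. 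The argument is essentially a bookkeeping exercise driven by the preceding lemma, so I do not expect any substantive obstacle; the only point requiring a little care is keeping straight that $\varepsilon$ is only $\F$-linear while the homology carries an $\F[U]$-module structure.
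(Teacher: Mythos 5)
Your argument is correct and follows exactly the route the paper intends: the paper states this corollary without proof as an immediate consequence of the preceding lemma and Proposition on graded lifts, and your write-up simply fills in that routine verification (that $\varepsilon$ descends to the torsion-free quotient, that graded lifts of nontrivial cellular classes are non-torsion and exhaust the homogeneous non-torsion classes, and the two directions of the equivalence). No gaps; the care you note about $\varepsilon$ being only $\F$-linear is exactly the right point to watch and is handled correctly.
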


It follows that in all cases (a)-(c) in section \ref{subsec:CW} the free part $H_*(\widetilde{\C_Q},D)/Tors$ is isomorphic to $\F[U]$.
Let $d$ denote the internal degree of the generator of this copy of $\F[U]$ (this is essentially the $d$-invariant of the surgery).
We are ready to compute $d$:

\begin{theorem}
\label{square erase}
The $d$-invariant of the complex $(\widetilde{\C_Q},D)$ can be computed in terms of $\CW(\bm{p},\bm{i}, \bm{b})$ as following:
\begin{itemize}
\item[(a)] If no cells of the rectangle $R$ are erased, this is the maximal value of $\deg(\sq)$ for 0-cells $\sq$.
\item[(b)] If all boundary cells are erased, this is the minimal value of $\deg(\sq)$ for 2-cells $\sq$.
\item[(c)] If two sides are erased, this is $\max_{c}\min_{\sq\in c}\deg(\sq)$, where 
$c$ is a simple lattice path connecting the erased sides.
\end{itemize}
\end{theorem}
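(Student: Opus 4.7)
The plan is to combine the corollary preceding this theorem with Proposition \ref{graded lift unique} in order to reduce the computation of the $d$-invariant to an explicit combinatorial optimization on the CW complex $\CW(\bm{p},\bm{i},\bm{b})$. By that corollary, the free part $H_*(\widetilde{\C_Q},D)/\mathrm{Tors}\cong \F[U]$ is generated by the graded lift of any cellular representative of the unique nonzero class in $H_*(C,\partial)$; by Proposition \ref{graded lift unique}, the maximal internal degree of a graded lift of a chain $c$ is precisely $\min_{\sq\in c}\deg(\sq)$. Therefore
\[
d \;=\; \max_{c}\,\min_{\sq\in c}\deg(\sq),
\]
where $c$ ranges over cellular cycles representing the generator of the appropriate relative homology group.

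To finish, I would analyze each of the three cases (a)--(c) from section \ref{subsec:CW}, in each of which the generator has an explicit description. In case (a), since $R$ is path-connected, every $0$-cell represents the generator of $H_0(C,\partial)\cong\F$, so the optimization simply maximizes $\deg(\sq)$ over all $0$-cells. In case (b), $\CW(\bm{p},\bm{i},\bm{b})$ has no $3$-cells, hence $B_2=0$, so the sum of all $2$-cells is the unique nonzero $2$-cycle in $Z_2=C_2$; the optimization then reduces to $\min_{\sq}\deg(\sq)$ over all $2$-cells. In these two cases the representative is essentially unique (up to scalar), so the claimed formulas follow immediately.

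Case (c) is the main point and the step I expect to require the most care. Any simple lattice path $p$ joining the two erased sides is a relative $1$-cycle representing the generator of $H_1(C,\partial)\cong\F$, and restricting the optimization to such paths yields the formula in the statement. The remaining technicality is to verify that enlarging the admissible cycles to arbitrary relative $1$-cycles does not improve the optimum. I would argue this by a path-extraction principle: given any $1$-cycle $c$ with $\min_{\sq\in c}\deg(\sq)=N$, the support of $c$ lies entirely in the subcomplex $C^{\geq N}$ of cells of internal degree $\geq N$, and $c$ still represents the nonzero class in the corresponding relative $H_1$. Since the relative complex in this case is homotopy equivalent to $S^1$, the presence of a nonzero class in $H_1(C^{\geq N},\partial\cap C^{\geq N})$ forces $C^{\geq N}$ to contain a simple lattice path joining the two erased sides; every cell of this path has degree $\geq N$, so the maximum is attained on a simple path, completing case (c).
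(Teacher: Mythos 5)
Your proposal is correct and follows essentially the same route as the paper: reduce via the corollary on graded lifts and Proposition \ref{graded lift unique} to the optimization $\max_c\min_{\sq\in c}\deg(\sq)$ over cellular representatives of the generator, then identify the optimum explicitly in each of the three cases. In particular, your path-extraction step in case (c) is the same observation the paper makes, namely that any relative $1$-cycle representing the nontrivial class contains in its support a simple lattice path joining the erased sides whose minimal degree is no smaller than that of the whole cycle.
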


\begin{proof}
In (a), $H_*(C,\partial)$ is generated by the class of a point (that is, a 0-cell). All points are equivalent in $\widetilde{\C_Q}$ modulo torsion, and any lift of a 0-cell $\sq$ has the form $U^{k}z(\sq)$ and has internal degree less than or equal to $\deg(\sq)$. Therefore the maximal internal degree of a graded lift of a point equals $\max \deg(\sq)$. 

In (b), $H_*(C,\partial)$ is generated by the sum of all 2-cells. The graded lift of this chain exists in internal degrees $\min \deg(\sq)$ and less.

In (c), similarly, for a given 1-chain $c$ representing the nontrivial homology class, a graded lift is possible in internal degrees 
$\min_{\sq\in c}\deg(\sq)$ and less. Therefore to find the internal degree of the generator of $\F[U]$ we need to take the maximum over all $c$. It remains to notice that any such $c$ contains a simple lattice path $c'$ connecting the erased sides, and $\min_{\sq\in c'}\deg(\sq)\ge \min_{\sq\in c}\deg(\sq)$.
\end{proof}

\subsection{Proof of Theorem \ref{thm:generalizedniwu}}
\label{d-grading}
Let us describe the gradings on the surgery complex in more detail.  

Let us fix a $\spinc$--structure $\bm{i}=(i_1, i_2)$ on $S^3_{\p}(\L)$. 
The four quadrants on the plane are denoted $(\pm,\pm)$. 
In each quadrant, we can find a unique point $s_{\pm \pm}(\bm{i})$ in $\spinc$--structure $\bm{i}$ that is the closest to the origin, as in Figure \ref{fourpoints}. 
If $i_1=0$ or $i_2=0$ then some of $s_{\pm\pm}$ coincide, and in particular, if $i_1=i_2=0$ then $s_{\pm \pm}(\bm{i})=(0,0)$ for all signs. We also define integers $s_{\pm}^{(1)}$ and $s_{\pm}^{(2)}$ to be the coordinates of the points, i.e. 
\[
s_{\pm \pm}=(s_{\pm}^{(1)},s_{\pm}^{(2)}).
\]

\begin{lemma}
\label{grafor}
If $p_1>0, p_2>0$, then 
$$
\deg z_{\emptyset}(s_{\pm\pm}(\bm{i}))=\deg z_{1, 2}(s_{++}(i_1, i_2))-2h(s_{\pm \pm}(i_1, i_2)).
$$
\end{lemma}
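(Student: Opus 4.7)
The plan is to reduce everything to $\deg z_{1,2}(s_{++})$ using the recursive relations from Proposition \ref{gradingchange}, and then match the correction terms to $h$ using the explicit form of $H_O$ on a two-component unlink.

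First, from Proposition \ref{gradingchange} I have
\[
\deg z_{\emptyset}(\bm{s}) = \deg z_{1,2}(\bm{s}) - 2H(s_1,s_2)
\]
for every $\bm{s}\in\Z^2$, and the recursions
\[
\deg z_{1,2}(s_1+p_1,s_2)=\deg z_{1,2}(s_1,s_2)+2s_1,\qquad \deg z_{1,2}(s_1,s_2+p_2)=\deg z_{1,2}(s_1,s_2)+2s_2.
\]
Since the four points $s_{\pm\pm}$ lie in the same $\spinc$--structure, they differ from $s_{++}=(s_{+}^{(1)},s_{+}^{(2)})$ by translations by $\pm p_1 e_1$ or $\pm p_2 e_2$, and by definition $s_{+}^{(i)}\ge 0$, $s_{-}^{(i)}<0$, with $s_{+}^{(i)}-s_{-}^{(i)}=p_i$. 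Applying the recursive formula once for each coordinate where we pass from a $+$ to a $-$ index gives, for example,
\[
\deg z_{1,2}(s_{-+})=\deg z_{1,2}(s_{++})-2s_{-}^{(1)},\quad \deg z_{1,2}(s_{--})=\deg z_{1,2}(s_{++})-2s_{-}^{(1)}-2s_{-}^{(2)},
\]
and similarly for $s_{+-}$.

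Second, I combine these with the identity $\deg z_{\emptyset}(\bm{s})=\deg z_{1,2}(\bm{s})-2H(\bm{s})$ to get, for each sign choice,
\[
\deg z_{\emptyset}(s_{\pm\pm})=\deg z_{1,2}(s_{++}) - 2H(s_{\pm\pm}) + 2\bigl(\text{shift from the recursions}\bigr),
\]
where the shift equals $0, -s_{-}^{(1)}, -s_{-}^{(2)}, -s_{-}^{(1)}-s_{-}^{(2)}$ in the $(++),(-+),(+-),(--)$ cases respectively. To finish, I need to verify that in each case this shift equals $H_O(s_{\pm\pm})$, because then $-2H(s_{\pm\pm})+2H_O(s_{\pm\pm})=-2h(s_{\pm\pm})$ by the definition $h=H-H_O$, giving exactly the claimed formula.

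The last verification is immediate from the explicit form of the $H$--function of the two-component unlink recorded in section \ref{sec:hfunction}: $H_O(s_1,s_2)=\tfrac12(|s_1|-s_1)+\tfrac12(|s_2|-s_2)$. Since $s_{+}^{(i)}\ge 0$ contributes $0$ and $s_{-}^{(i)}<0$ contributes $-s_{-}^{(i)}$, the four values $H_O(s_{\pm\pm})$ are exactly $0,\,-s_{-}^{(1)},\,-s_{-}^{(2)},\,-s_{-}^{(1)}-s_{-}^{(2)}$, matching the shifts computed above. No step looks genuinely hard; the only thing to be careful about is handling the degenerate cases $s_{+}^{(i)}=0$ (where two of the four points coincide), but the formulas then collapse consistently because $H_O$ vanishes on the relevant coordinate.
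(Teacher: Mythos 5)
Your proposal is correct and follows essentially the same route as the paper's proof: apply \eqref{deg z0} at each of the four points, use the recursions \eqref{grading change 1}--\eqref{grading change 2} to shift everything to $\deg z_{1,2}(s_{++})$, and identify the resulting shifts (your $-s_{-}^{(1)}$, $-s_{-}^{(2)}$ are the paper's $p_1-s_1$, $p_2-s_2$) with $H_O(s_{\pm\pm})$ so that $-2H+2H_O=-2h$. The paper likewise handles the degenerate case $s_{+}^{(i)}=0$ by a separate check, exactly as you indicate.
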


\begin{proof}
Assume that $s_{++}(i_1, i_2)=(s_1, s_2)$. By Equation \ref{deg z0}, 
$$\deg z_{\emptyset}(s_{++}(\bm{i}))=\deg z_{1, 2}(s_{++}(\bm{i}))-2H(s_{++}(\bm{i})).$$ 
Suppose $s_1\neq 0, s_2\neq 0$. By Proposition \ref{gradingchange}, 
\begin{multline*}\deg z_{\emptyset}(s_{-+}(\bm{i}))=\deg z_{1, 2}(s_{-+}(\bm{i}))-2H(s_{-+}(\bm{i}))=\\
\deg z_{1, 2}(s_{++}(\bm{i}))-2(s_1-p_1)-2H(s_{-+}(\bm{i})).
\end{multline*}
Similarly,
$$\deg z_{\emptyset}(s_{+-}(\bm{i}))=\deg z_{1, 2}(s_{++}(\bm{i}))-2(s_2-p_2)-2H(s_{+-}(\bm{i})),$$
$$\deg z_{\emptyset}(s_{--}(\bm{i}))=\deg z_{1, 2}(s_{++}(\bm{i}))-2(s_1-p_1)-2(s_2-p_2)-2H(s_{--}(\bm{i})).$$
For the unlink $O$ with two components, we have
$$H_{O}(s_{++}(\bm{i}))=0, H_{O}(s_{-+}(\bm{i}))=p_1-s_1, H_{O}(s_{+-}(\bm{i}))=p_2-s_2$$
and 
$$H_{O}(s_{--}(\bm{i}))=p_1-s_1+p_2-s_2.$$
Therefore, 
$$\deg z_{\emptyset}(s_{\pm\pm}(\bm{i}))=\deg z_{1, 2}(s_{++}(\bm{i}))-2H(s_{\pm\pm}(\bm{i}))+2H_{O}(s_{\pm\pm}(\bm{i}))=$$
$$\deg z_{1, 2}(s_{++}(\bm{i}))-2h(s_{\pm\pm}(\bm{i})).$$
If $s_1=0$ and $s_2\neq 0$, then 
$$s_{\pm +}(\bm{i})=(0, s_2), \quad s_{\pm -}(\bm{i})=(0, s_2-p_2).$$
It is easy to check that the equation in Lemma \ref{grafor} still holds. Similarly, it also holds in the case $s_2=0$.
\end{proof}

\begin{figure}[H]
\centering
\includegraphics[width=2.6in]{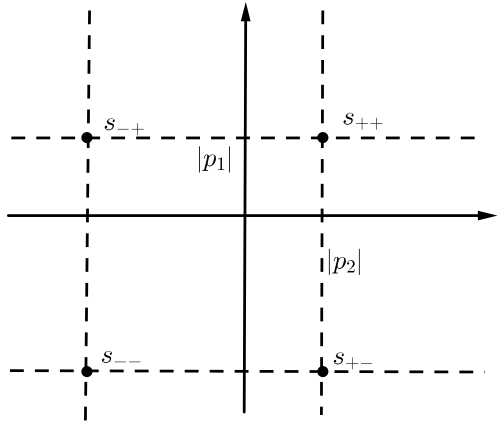}
\caption{For each $\spinc$--structure $\bm{i}$, there is a unique point $s_{\pm \pm}(\bm{i})$ in each quadrant that is the closest to the origin.}
\label{fourpoints}
\end{figure}

\begin{lemma}
\label{grafor1d}
If $p_1>0$ then
$$
\deg z_{2}(s_{\pm}^{(1)},t)=\deg z_{1, 2}(s_{+}^{(1)},t)-2h_1(s_{\pm}^{(1)}).
$$
\end{lemma}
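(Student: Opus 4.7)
The plan is to mirror the proof of Lemma \ref{grafor}, reducing everything to the formulas from Proposition \ref{gradingchange} together with the explicit form $H_O(s)=(|s|-s)/2$ of the unknot's $H$-function, which in particular gives $H_1(s)=h_1(s)$ for $s\ge 0$ and $H_1(s)=h_1(s)-s$ for $s\le 0$.

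For the $+$ case, since $s_+^{(1)}\ge 0$, equation \eqref{deg z1 and z2} applied at $(s_+^{(1)},t)$ yields
\[
\deg z_2(s_+^{(1)},t)=\deg z_{1,2}(s_+^{(1)},t)-2H_1(s_+^{(1)})=\deg z_{1,2}(s_+^{(1)},t)-2h_1(s_+^{(1)}),
\]
which is exactly the claim.

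For the $-$ case, note that $s_-^{(1)}\le 0$ and $s_+^{(1)}=s_-^{(1)}+p_1$ when $i_1\ne 0$; the edge case $i_1=0$ forces $s_+^{(1)}=s_-^{(1)}=0$ and the claim is tautological. Applying \eqref{deg z1 and z2} at $(s_-^{(1)},t)$, then using the recursion \eqref{grading change 1} to rewrite $\deg z_{1,2}(s_-^{(1)},t)=\deg z_{1,2}(s_+^{(1)},t)-2s_-^{(1)}$, and substituting $H_1(s_-^{(1)})=h_1(s_-^{(1)})-s_-^{(1)}$, the two contributions of $2s_-^{(1)}$ cancel and the desired formula follows.

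No substantive obstacle is expected: the argument is routine bookkeeping analogous to Lemma \ref{grafor}, and the only mild subtlety is consistently tracking the shift \eqref{grading change 1} between representatives in different $\spinc$-orbits when $s_-^{(1)}<0$. The second coordinate $t$ plays no role in the computation since we never move in the $s_2$-direction.
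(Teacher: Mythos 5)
Your proof is correct and follows essentially the same route as the paper's: apply \eqref{deg z1 and z2}, use \eqref{grading change 1} to shift between $s_-^{(1)}$ and $s_+^{(1)}=s_-^{(1)}+p_1$, and convert $H_1$ to $h_1$ via $H_O(s)=(|s|-s)/2$. The only difference is cosmetic (you also spell out the tautological edge case $i_1=0$, which the paper leaves implicit).
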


\begin{proof}
The proof is similar to the proof of Lemma \ref{grafor}. Assume that $s_1=s_{+}^{(1)}\neq 0$. Then $s_{-}^{(1)}=s_1-p_1$
and
$$
\deg z_{2}(s_1,t)=\deg z_{1, 2}(s_1,t)-2H_1(s_1)=\deg z_{1, 2}(s_1,t)-2h_1(s_1), 
$$
$$
\deg z_{2}(s_1-p_1,t)=\deg z_{1, 2}(s_1,t)-2H_1(s_1-p_1)-2(s_1-p_1)=\deg z_{1, 2}(s_1,t)-2h_1(s_1-p_1).
$$
\end{proof}

\noindent
{\bf Proof of Theorem \ref{thm:generalizedniwu}:}
{\bf (a)} Assume $p_1,p_2<0$. Then by Theorem \ref{square erase}(a), in which case no cells are erased, we get 
\[
	d(S^3_{\p}(\L),(i_1,i_2))=\max_{s_k=i_k+a_kp_k}\deg z_{1,2}(s_1,s_2).
\]
The internal degree of $z_{1, 2}(s_1, s_2)$ does not depend on the link, but depends on the framing matrix $\Lambda$. Since the $(p_1, p_2)$-surgery on the unlink decomposes as $L(p_1, 1)\#L(p_2, 1)$ and has the same framing matrix, then 
\[
	d(S^{3}_{\p}(\L), (i_1, i_2))=\phi(p_1, i_1)+\phi(p_2, i_2).
\]

{\bf (b)} Assume $p_1,p_2>0$. Then by Theorem \ref{square erase}(b), in which case all boundary cells are erased, we get 
\[
d(S^3_{\p}(\L),(i_1,i_2))=\min_{s_k=i_k+a_kp_k}\deg z_{\emptyset}(s_1,s_2)+2.
\]
Note that we add 2 here because the homological degree of a generator is a sum of $\deg$ and its cube degree.
Let us prove that $\deg z_{\emptyset}(s_1,s_2)$ decreases towards the origin.  Indeed, by combining \eqref{deg z0} and \eqref{grading change 1}, we get:
\begin{equation*}
\label{grading change z0}
\deg z_{\emptyset}(s_1+p_1,s_2)=\deg z_{\emptyset}(s_1,s_2)+2s_1+2H(s_1,s_2)-2H(s_1+p_1,s_2).
\end{equation*}
By Lemma \ref{h-function increase}
\[
0\le H(s_1,s_2)-H(s_1+p_1,s_2)\le p_1.
\]
Therefore for $s_1\ge 0$ we have $\deg z_{\emptyset}(s_1+p_1,s_2)\ge \deg z_{\emptyset}(s_1,s_2)$ and
for $s_1\le -p_1$ we have $\deg z_{\emptyset}(s_1+p_1,s_2)\le \deg z_{\emptyset}(s_1,s_2)$.

Therefore the minimal value is achieved at one of $s_{\pm \pm}(\bm{i})$. 
By Lemma \ref{grafor}, 
$$\deg z_{\emptyset}(s_{\pm\pm}(\bm{i}))=\deg z_{1, 2}(s_{++}(\bm{i}))-2h(s_{\pm \pm}(\bm{i})).$$ 
Then 
$$
d(S^3_{\p}(\L),(i_1,i_2))=\deg z_{1, 2}(s_{++}(\bm{i}))-2\max h(s_{\pm \pm}(\bm{i}))+2,
$$ 
where, as above, $\deg z_{1, 2}(s_{++}(\bm{i}))$ does not depend on the link.
For the unlink  $h=0$, hence
$$\deg z_{1, 2}(s_{++}(\bm{i}))+2=d(S^3_{\p}(O),(i_1,i_2))=\phi(p_1, i_1)+\phi(p_2, i_2).$$

{\bf (c)} Assume that $p_1>0, p_2<0$. Then by Theorem \ref{square erase}(c), we get 
$$
d(S^{3}_{\p}(\L), (i_1, i_2))=\max_{c}\min_{\sq\in c}\deg(\sq)+1
$$
where $c$ is a simple lattice path connecting the erased sides. Let $c(t)$ be the horizontal path connecting erased boundaries at height $t$. 
Let us compute $\min_{\sq\in c(t)}\deg(\sq)$. By 
Proposition \ref{gradingchange} we get 
$$
\deg z_{2}(s_1+p_1, t)=\deg z_{2}(s_1, t)+2H_1(s_1)-2H_1(s_1+p_1)+2s_1.
$$
and similarly to case (b) we conclude that the minimum is achieved at one of $(s_{\pm}^{(1)},t)$.
Also, by Lemma \ref{grafor1d} we get
\begin{equation}
\label{grading121}
\min_{\sq\in c(t)}\deg(\sq)=\deg z_{1,2}(s_{+}^{(1)}, t)-2\max h_1(s_{\pm}^{(1)}).
\end{equation}

By  Proposition \ref{gradingchange}, we have
$$
\deg z_{2}(s_1, s_2+p_2)=\deg z_{2}(s_1, s_2)+2s_2.
$$
Since $p_2<0$, this means that for fixed $s_1$ the internal degree of $z_{2}(s_1, t)$ increases towards the origin and achieves its maximum at $t_0=s_{+}^{(2)}+p_2$. 

For an arbitrary simple path $c'$ connecting the erased boundaries, it must contain a horizontal segment corresponding to  
$z_2(s_{\pm}^{(1)}, t)$.
Then 
$$
\min_{\sq\in c'}\deg(\sq)\le \deg z_2(s_{\pm}^{(1)}, t)\le \deg z_2(s_{\pm}^{(1)}, t_0)=\min_{\sq\in c(t_0)}\deg(\sq).
$$
Therefore,
$$
\max_{c}\min_{\sq\in c}\deg(\sq)=\min_{\sq\in c(t_0)}\deg(\sq)=\deg z_{1,2}(s_{+}^{(1)},s_{+}^{(2)}+p_2)-2\max h_1(s_{\pm}^{(1)}).
$$
The second equality follows  the same argument as the one for \eqref{grading121}.
Again, the first term does not depend on the link and hence equals the $d$-invariant of the lens space:
$$
\deg z_{1,2}(s_{+}^{(1)},s_{+}^{(2)}+p_2)+1=d(S^3_{\p}(O),i_1,i_2)=\phi(p_1, i_1)+\phi(p_2, i_2).
$$
Finally, it follows from \cite[Proposition 1.6]{NiWu} that 
$$
d(S^3_{p_1}(L_1),i_1)=\phi(p_1, i_1)-2\max h_1(s_{\pm}^{(1)}),
$$
so
$$
d(S^3_{\p}(\L),(i_1,i_2))=d(S^3_{p_1}(L_1),i_1)+\phi(p_2, i_2).
$$\null\hfill\qedsymbol

\subsection{Example: $d$-invariants and twisting}
\label{subsec:Kirby}

We can use this result to prove a curious property of the $H$-function for L--space links of linking number zero. Suppose that $L_1$ is an unknot. Then after performing a Rolfsen twist, a $(+1,p_2)$-surgery on $\L$ is homeomorphic to $p_2$-surgery on some knot $L'_2$ obtained from $L_2$ by a negative full twist \cite[Section 5]{GS}. See Figure \ref{rolfsen}. Note that while Theorem \ref{l-space link cond} implies that $L_2$ is an L--space knot (since $\L$ is an L--space link),  $L_2'$ does not need to be  an L--space knot, see Corollary \ref{cor: L2prime L space}. 
  
\begin{theorem}
\label{H for slamdunk}
Let $\L=L_1\cup L_2$ be an L--space link of linking number zero, and $L_{1}$ is an unknot. The $H$-function for $L'_2$ equals $H(0,s_2)$.
\end{theorem}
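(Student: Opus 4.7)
My strategy is to combine the Kirby-calculus identification of surgery manifolds with the $d$-invariant formulas on both sides of the twist. Since $L_1$ is unknotted with $lk(L_1,L_2)=0$, performing the Rolfsen twist on $L_1$ identifies $S^3_{1,p_2}(\L)\cong S^3_{p_2}(L'_2)$ for every positive integer $p_2$; because the linking number vanishes, the framing on $L_2$ is preserved and the induced bijection on $\spinc$-structures matches $(0,i_2)$ on the left with $i_2$ on the right. Hence the $d$-invariants of the two sides agree in matching $\spinc$-structures, and I can read off $H_\L(0,\cdot)$ by comparing the two formulas.

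For any $p_2>0$, Theorem \ref{thm:generalizedniwu}(b) applied with $p_1=1$ and $i_1=0$ gives, using $\phi(1,0)=0$ together with the fact that $i_1=0$ collapses the four points $s_{\pm\pm}(0,i_2)$ to just $(0,s_+^{(2)})$ and $(0,s_-^{(2)})$,
\[
d(S^3_{1,p_2}(\L),(0,i_2)) = \phi(p_2,i_2) - 2\max\{h_\L(0,s_+^{(2)}),\,h_\L(0,s_-^{(2)})\}.
\]
On the knot side, the integer-surgery formula of Ni–Wu \cite[Proposition 1.6]{NiWu} is valid for \emph{any} knot $K$ and, rewritten using $h_K(s)=V^+_{|s|}(K)$, reads
\[
d(S^3_{p_2}(L'_2),i_2) = \phi(p_2,i_2) - 2\max\{h_{L'_2}(s_+^{(2)}),\,h_{L'_2}(s_-^{(2)})\}.
\]
Equating these two expressions yields, for every $p_2>0$ and every $i_2$,
\[
\max\{h_\L(0,s_+^{(2)}),\,h_\L(0,s_-^{(2)})\}=\max\{h_{L'_2}(s_+^{(2)}),\,h_{L'_2}(s_-^{(2)})\}.
\]

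To extract the individual values, fix an arbitrary $s\in\Z$ and choose $p_2\gg|s|$ with $i_2\equiv s\pmod{p_2}$ in the fundamental domain, so that one of $s_\pm^{(2)}$ equals $s$ while the other has very large absolute value. By stabilization (Lemma \ref{h-function bdy}) together with the symmetry of the $H$-function, both $h_\L(0,\cdot)$ and $h_{L'_2}(\cdot)$ vanish at the far-away point, and the identity collapses to $h_\L(0,s)=h_{L'_2}(s)$. Since the two-component unlink splits, $H_O(0,s)$ coincides with the unknot $H$-function evaluated at $s$, and the identical normalization converts $h_{L'_2}$ to $H_{L'_2}$ on the knot side; adding this common term to both sides promotes the $h$-equality to $H_\L(0,s)=H_{L'_2}(s)$, which is the claim. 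The main delicate points are the $\spinc$-structure identification under the Rolfsen diffeomorphism (essentially automatic here because $lk(L_1,L_2)=0$ prevents any mixing of labels, but deserving a careful check), and the need to invoke Ni–Wu in full generality: $L'_2$ is not assumed to be an L-space knot, so we cannot replace its $H$-function by something simpler on the right-hand side.
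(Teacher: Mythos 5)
Your proposal is correct and follows essentially the same route as the paper: identify $S^3_{1,p_2}(\L)\cong S^3_{p_2}(L'_2)$ by a Rolfsen twist, compute the $d$-invariant of the left side via Theorem \ref{thm:generalizedniwu}(b) with $p_1=1$, $i_1=0$, and compare with the (general, non-L--space) Ni--Wu/large-surgery formula for $L'_2$ at $p_2\gg 0$ to read off $h_\L(0,s)=h_{L'_2}(s)$. The paper phrases the knot side directly through the large surgery theorem rather than stating Ni--Wu for all $p_2$ and then specializing, but this is the same argument with the same ingredients.
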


\begin{proof}
By definition, the $H$--function is equal (up to a shift) to the $d$-invariant of $S^3_{p_2}(L'_2)$ or, equivalently, of
$S^3_{1,p_2}(\L)$ for $p_2\gg 0$. Since $p_1=1$, a $\spinc$-structure on the surgery is given by a lattice point $(0,i_2)$ where $-p_2/2\le i_2 \le p_2/2$. 
The $d$-invariant is determined by the values of the $H$-function of $\L$ at the points $(0,i_2)$. By Theorem \ref{thm:generalizedniwu} we get 
$$
d(S^3_{p_2}(L'_2), i_2)=d(S^3_{1,p_2}(L), (0, i_2))=0+\phi(p_2,i_2)-2h(0,i_2).
$$
Indeed, $\phi(1,0)=0$ since $1$-surgery of $S^3$ along the unknot is $S^3$. Then $h(0, i_2)=h_{L'_2}(i_2)$. Hence, the $H$-function for $L'_{2}$ equals $H(0, s_2)$.
\end{proof}

\begin{figure}[H]
\centering
\begin{tikzpicture}
\node[anchor=south west,inner sep=0] at (0,0) {\includegraphics[width=0.4\textwidth]{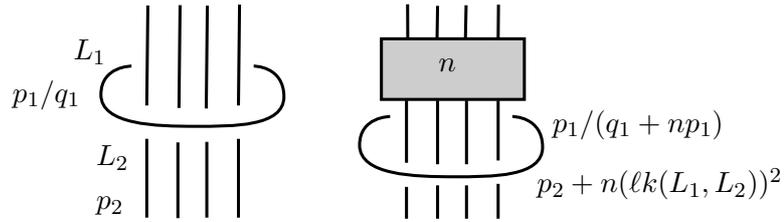}};
\node[label=above right:{$L_1$}] at (-0.6,1.8){};
\node[label=above right:{$p_1/q_1$}] at (-1.4,1.2){};
\node[label=above right:{$p_1/(q_1+np_1)$}] at (5.7,0.8){};
\node[label=above right:{$n$}] at (4.2,1.7){};
\node[label=above right:{$p_2$}] at (-0.3, -0.2){};
\node[label=above right:{$L_2$}] at (-0.3, 0.4){};
\node[label=above right:{$p_2+n(\ell{k}(L_1,L_2))^2$}] at (5.5, 0){};
\end{tikzpicture}
\caption{A Rolfsen twist. Here we take $p_1/q_1=\pm 1$ and $n=\mp1$.}
\label{rolfsen}
\end{figure}

\begin{remark}
Similarly, we can consider $(-1, p_2)$-surgery on $\L$. Let  $L''_2$ be  the knot obtained from $L_2$ by a   positive full twist. By Theorem  \ref{thm:generalizedniwu}, 
$$d(S^{3}_{-1, p_2}(\L), i_2)=d(S^{3}_{p_2}(L''_2), i_2)=d(S^{3}_{p_2}(L_2), i_2).$$
Hence, $H_{L_2}(s)=H_{L''_2}(s)$.
\end{remark}

\begin{example}
If $L$ is the positively-clasped Whitehead link then $L'_2$ is the right-handed trefoil, and $L''_2$ is the figure eight knot. See Figure \ref{whitehead}. The values of the $H$-function for the Whitehead link on the axis agree with the values of the $H$-function of the trefoil (see also Example \ref{wh H}). The values of the $H$-function for the unknot agree with the  values of the $H$-function for the figure eight knot. 
\end{example}

Assume from now on that $L$ is nontrivial so that $H(0,0)>0$. If $L_1$ is an unknot, then by the stabilization property (Lemma \ref{h-function bdy}) for $s_2\gg 0$ we have $H(0,s_2)=H_1(0)=0$.
We define 
\[
b_2=\max\{s_2: H(0,s_2)>0\}.
\]
Clearly, $b_2\ge 0$. Since $H(\bm{s})=h(\bm{s})$ for $\bm{s}\succeq \mathbf{0}$, note that we could have also defined $b_2$ as $\max\{s_2: h(0,s_2)>0\}$.
\begin{corollary}
\label{tau invariant}
In the above notations one has $\nu^{+}(L'_2)=b_2+1$.
\end{corollary}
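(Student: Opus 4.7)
The plan is to combine Theorem \ref{H for slamdunk} with the definition of $\nu^{+}$ and the monotonicity of the $H$-function along the positive axis. Recall that by \cite[Definition 2.1]{HW}, for a knot $K$ in $S^{3}$ the invariant $\nu^{+}(K)$ can be characterized as the smallest nonnegative integer $s$ such that $H_{K}(s)=0$ (equivalently, $V_{s}^{+}(K)=0$ in the Ni--Wu notation).

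First I would invoke Theorem \ref{H for slamdunk}, which identifies
\[
H_{L'_{2}}(s)=H_{\L}(0,s)
\]
for all $s\in\Z$. Combined with the reformulation of $\nu^{+}$ above, this gives
\[
\nu^{+}(L'_{2})=\min\{s\ge 0: H_{\L}(0,s)=0\}.
\]

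Next I would translate this back into a statement about $h$. Since $H_{O}(0,s)=0$ for $s\ge 0$, we have $h(0,s)=H_{\L}(0,s)$ for every $s\ge 0$, so that
\[
b_{2}=\max\{s_{2}\ge 0: H_{\L}(0,s_{2})>0\}.
\]
Because $\L$ is assumed nontrivial, $H_{\L}(0,0)>0$, so $b_{2}\ge 0$ is well-defined. By Lemma \ref{h-function increase} applied in the $s_{2}$-direction, the function $s_{2}\mapsto H_{\L}(0,s_{2})$ is non-increasing for $s_{2}\ge 0$ and takes nonnegative integer values (Corollary \ref{h nonnegative}). Hence once it hits $0$ it remains $0$; that is, $H_{\L}(0,s)=0$ for all $s\ge b_{2}+1$, while $H_{\L}(0,b_{2})>0$.

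Putting these together,
\[
\nu^{+}(L'_{2})=\min\{s\ge 0: H_{\L}(0,s)=0\}=b_{2}+1,
\]
which is the claimed identity. There is no real obstacle here beyond checking the book-keeping: the only subtle point is the passage from $H$ to $h$ along the positive $s_{2}$-axis, which is transparent once one notes that $H_{O}$ vanishes there, so this step simply amounts to unwinding definitions after applying Theorem \ref{H for slamdunk}.
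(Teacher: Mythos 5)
Your argument is correct and is essentially the paper's own proof: both rest on Theorem \ref{H for slamdunk} identifying $H_{L'_2}(s)$ with $H_{\L}(0,s)$ and then unwind the characterization of $\nu^{+}$ in terms of the vanishing of the $H$-function (the paper phrases this as $\nu^{+}(K)=\max\{s:H_K(s)>0\}+1$, you as $\min\{s\ge 0:H_K(s)=0\}$, which agree by the monotonicity you check). The extra book-keeping about $h$ versus $H$ on the positive axis is fine and matches the remark preceding the corollary in the paper.
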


\begin{proof}
By Theorem \ref{H for slamdunk} $H(0,s_2)$ agrees with the $H$-function of $L'_2$, and following the definition of the invariant $\nu^+$ in \cite{HW},
\[
\nu^{+}(L'_2)=\max\{s_2:H_{L'_2}(s_2)>0\}+1=\max\{s_2:H(0,s_2)>0\}+1=b_2+1.\qedhere
\]
\end{proof}
In particular this means that $L'_2$ has nonzero $H$-function and positive $\nu^{+}$-invariant. Note that Proposition \ref{prop:tau} is the special case of Corollary \ref{tau invariant} when we assume that both $L_1$ and $L_2$ are unknotted.

\subsection{Example: $\pm 1$ surgery}

Let $\L=L_{1}\cup L_{2}$ denote an L--space link with vanishing linking number.  If $p_{1}=p_{2}=-1$, then by Theorem \ref{square erase}, no cells in the truncated square are erased, and the $d$-invariant of the surgery complex $d(S^{3}_{-1, -1}(\L))$ equals  the $d$-invariant of the lens space $L(-1, 1)\# L(-1, 1)$ which is zero.

If $p_{1}=p_{2}=1$, there is a unique $\spinc$-structure $(0, 0)$ on $d(S^{3}_{1, 1}(\L))$. Then $s_{\pm \pm}(0, 0)=(0, 0)$. By Theorem \ref{thm:generalizedniwu},
\begin{equation*}
\label{onesurg}
d(S^{3}_{1, 1}(\L))=-2h(0, 0).
\end{equation*}

\begin{figure}[H]
\centering
\begin{tikzpicture}
\node[anchor=south west,inner sep=0] at (0,0) {\includegraphics[width=0.4\textwidth]{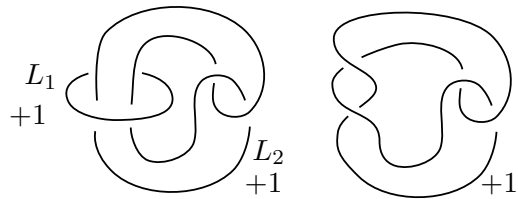}};
\node[label=above right:{$L_1$}] at (-0.8,1.2){};
\node[label=above right:{$+1$}] at (-1,0.7){};
\node[label=above right:{$L_2$}] at (2.2, 0.2){};
\node[label=above right:{$+1$}] at (2.1, -0.2){};
\node[label=above right:{$+1$}] at (5.2, -0.2){};
\end{tikzpicture}
\caption{After $+1$ surgery along component $L_1$ of the positively-clasped Whitehead link we obtain the right-handed trefoil in $S^3$.}
\label{whitehead}
\end{figure}


\section{Classification of L--space surgeries}
\label{unknot components}

For L--space links with unknotted components, we give a complete description of (integral) L--space surgery coefficients.
We define nonnegative integers $b_1,b_2$ as in Corollary \ref{tau invariant}:
$$
b_1=\max\{s_1: h(s_1,0)>0\},\ b_2=\max\{s_2: h(0,s_2)>0\}.
$$

\begin{theorem}
\label{bounds from tau}
Assume that $\L$ is a nontrivial $L$--space link with unknotted components and linking number zero.   Then $S^3_{p_1,p_2}(\L)$ is an L--space if and only if $p_1>2b_1$ and $p_2>2b_2$.
\end{theorem}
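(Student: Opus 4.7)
The plan is to combine the monotonicity result of Theorem \ref{l-space link cond}(c) with a direct analysis of the perturbed truncated surgery complex of Section \ref{subsec:truncation}. Since $b_1,b_2\geq 0$, the hypothesis $p_1>2b_1,p_2>2b_2$ forces $p_1,p_2>0$; the case when some $p_i\leq 0$ is handled separately, by showing that for a nontrivial L-space link the perturbed complex over $\F[U]$ cannot be free in those cases (the $d$-invariants in Theorem \ref{thm:generalizedniwu}(a),(c) agree with those of $L(p_1,1)\#L(p_2,1)$, but nontriviality of $h$ obstructs the full L-space structure at the level of the complex).

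As a preparatory step, I would show that $\mathrm{supp}(h)\subseteq[-b_1,b_1]\times[-b_2,b_2]$. Indeed, by definition of $b_1$ and symmetry \eqref{eq: h symmetry}, $h(s_1,0)=0$ for $|s_1|>b_1$; by Lemma \ref{lem: h increases} and \eqref{eq: h symmetry}, $h(s_1,s_2)\le h(s_1,0)$ for all $s_2$, giving $h(s_1,s_2)=0$ whenever $|s_1|>b_1$. A symmetric argument disposes of the $s_2$ direction.

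\emph{Forward direction.} Since $L_1,L_2$ are unknots, all positive surgeries on sublinks of $\L$ are lens spaces, so the sublink hypothesis of Theorem \ref{l-space link cond}(c) is automatic and the set of L-space surgery coefficients in $\Z_{>0}^2$ is upward-closed. It therefore suffices to prove that the threshold surgery $S^3_{2b_1+1,2b_2+1}(\L)$ is an L-space. At this threshold, distinct lattice representatives within any $\spinc$ class differ by $p_i>2b_i$ in the $i$th coordinate, so each $\spinc$ class contains at most one lattice point in the support $[-b_1,b_1]\times[-b_2,b_2]$ of $h$; at every other representative the function $H$ coincides with $H_O$. Using the CW model for case (b) of Section \ref{subsec:CW}, the perturbed surgery complex in each $\spinc$ structure agrees with the two-component unlink's complex everywhere except possibly at one exceptional 2-cell. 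Since the unlink surgery is $L(2b_1+1,1)\#L(2b_2+1,1)$, an L-space, a local analysis at the exceptional cell shows that raising $H$ by a single positive value of $h$ shifts some internal degrees but does not create $U$-torsion in the cellular complex.

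\emph{Converse direction.} I would argue the contrapositive: assume $p_1\le 2b_1$ (the case $p_2\le 2b_2$ is symmetric). In the $\spinc$ structure containing $(b_1,0)$, the translate $(b_1-p_1,0)$ is another representative, and since $|b_1-p_1|\le b_1$, Lemma \ref{lem: h increases} combined with \eqref{eq: h symmetry} gives $h(b_1-p_1,0)=h(p_1-b_1,0)\ge h(b_1,0)>0$. Hence two distinct lattice representatives in this $\spinc$ class carry positive $h$. Tracing the perturbed complex in this $\spinc$ structure through the CW model of Section \ref{subsec:CW} then produces an explicit $U$-torsion generator in the homology, contradicting the L-space property. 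This argument is uniform in $p_2$, so it rules out L-space-ness for every $(p_1,p_2)$ with $p_1\le 2b_1$.

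\emph{Main obstacle.} The principal technical step in both directions is the cell-level analysis converting the positivity pattern of $h$ on lattice points of a single $\spinc$ class into a statement about $U$-torsion in the corresponding piece of $\widetilde{\C_Q}$. This requires carefully combining the grading formulas of Proposition \ref{gradingchange} with the cellular differentials of Section \ref{subsec:CW} beyond what is needed for Theorem \ref{thm:generalizedniwu}, which only tracks the free part. An alternative shortcut for the special case $b_1=0$ uses Proposition \ref{prop:tau}, the Rolfsen-twist identification $S^3_{1,p_2}(\L)=S^3_{p_2}(L'_2)$, and the Ni--Wu classification of L-space surgeries on knots; the result then extends to all $p_1\ge 1$ by upward-closedness. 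Handling $b_1>0$, however, genuinely requires the complex-level torsion analysis above.
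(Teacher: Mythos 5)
Your overall strategy---bounding the support of $h$ by $[-b_1,b_1]\times[-b_2,b_2]$, locating the threshold at $p_i=2b_i+1$, and detecting failure of the L--space condition from two lattice representatives carrying positive $h$---matches the paper's, and your preparatory step and the reduction via Theorem \ref{l-space link cond}(c) are both correct. But in each direction the decisive step is asserted rather than proved, and in each case it is exactly the step that constitutes the paper's proof. In the forward direction you keep the full truncated complex and claim that ``a local analysis at the exceptional cell shows that raising $H$ by a single positive value of $h$ \dots does not create $U$-torsion.'' Nothing in your write-up supports this: Theorem \ref{square erase} controls only the free part of $H_*(\widetilde{\C_Q},D)$ via $H_*(C,\partial)$, the torsion is not controlled by the cellular homology, and comparing with the unlink complex only identifies the free parts up to a degree shift. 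The paper sidesteps the issue by choosing the truncation constants to be $b_1,b_2$ themselves (Remark \ref{rem: different truncation}): when $p_i>2b_i$ each $\spinc$ structure then retains exactly one cell $\A^{00}_{\bm{s}}\simeq\F[U]$, so there is nothing left to check. Your version still needs an argument (for instance, cancelling every differential that is multiplication by $U^0=1$, which amounts to performing that same further truncation).

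In the converse direction, ``tracing the perturbed complex \dots produces an explicit $U$-torsion generator'' is likewise not a proof, and the actual argument is subtler than exhibiting one torsion class: the paper assumes the surgery \emph{is} an L--space, so that both $U^{-1}D(z_{\emptyset}(-b_1,0))$ and $U^{-1}D(z_{\emptyset}(p_1-b_1,0))$ must bound, and then derives the contradictory pair of inequalities $\deg z_{\emptyset}(p_1-b_1,0)\ge \deg z_{\emptyset}(-b_1,0)+2$ and $\deg z_{\emptyset}(-b_1,0)\ge \deg z_{\emptyset}(p_1-b_1,0)+2$. One cannot say in advance which of the two candidate classes is nontrivial torsion, so no ``explicit generator'' is available without this degree bookkeeping. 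Finally, the case $p_i\le 0$ is only waved at, while the paper needs it and proves it: $\alpha=U^{-1}D(z_{\emptyset}(0,0))$ can only be a boundary when the CW model is in case (b) of Section \ref{subsec:CW}, i.e.\ when $p_1,p_2>0$, since in cases (a) and (c) the cell at $(0,0)$ is the unique $2$-chain with its cellular boundary, forcing $\beta=U^{-1}z_{\emptyset}(0,0)$, which does not exist. You correctly flag the cell-level torsion analysis as the main obstacle, but identifying the obstacle is not the same as overcoming it.
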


\begin{proof}
By Lemma \ref{lem: h increases} we have $h(s_1,s_2)=0$ outside the rectangle $[-b_1,b_1]\times [-b_2,b_2]$.
Also, $h(-b_1,0)=h(b_1,0)>0$, so by Lemma \ref{lem: h increases}, $h(s_1,0)>0$ for $-b_1\le s_1\le b_1$.

Assuming that $p_1>2b_1$ and $p_2>2b_2$, then we can truncate the surgery complex to obtain a rectangle 
where in each $\spinc$ structure  $\bm{i}$, there  is exactly one lattice point $\A^{00}_{\bm{s}}$; see Figure \ref{positive}. Hence, $HF^{-}(S^{3}_{\p}(\L), \bm{i})\cong H_{\ast}(\A^{00}_{\bm{s}})\cong \F[U]$. Therefore $S^{3}_{\p}(\L)$ is an L--space.

Conversely, assume that $S^3_{\p}(\L)$ is an L--space. Let us first prove that $p_1,p_2>0$. Indeed, since $H(0,0)>0$ the boundary of  
$z_{\emptyset}(0,0)$ is divisible by $U$, so let $\alpha=U^{-1}D (z_{\emptyset}(0,0))$. Then $U\alpha$ is a boundary, which implies that $\alpha$ is $U$-torsion in homology. Hence $\alpha$ is $0$ in homology since $S^{3}_{\p}(\L)$ is an L--space. Therefore $\alpha=D(\beta)$ for some $\beta$, and $\beta$ must be supported on all 2-cells outside $(0,0)$. This is possible only if all cells on the boundary are erased, which occurs when $p_1,p_2>0$.

Now, assume that $p_2>0$ and $0<p_1\le 2b_1$. Then $h(-b_1,0)>0$ and $h(p_1-b_1,0)>0$. Similarly, the boundary of  
$z_{\emptyset}(-b_1,0)$ is divisible by $U$, so let $\alpha'=U^{-1} D(z_{\emptyset}(-b_1,0))$ and $\alpha'=D(\beta')$. Then $\deg \beta'=\deg \alpha'=\deg  z_{\emptyset}(-b_1,0)+2$ and $\beta'$ is supported on all 2-cells outside $(-b_1,0)$. In particular, it is supported at $(p_1-b_1,0)$ hence 
$$
\deg z_{\emptyset}(p_1-b_1,0)\ge \deg \beta'= \deg  z_{\emptyset}(-b_1,0)+2.
$$
By swapping the roles of $(-b_1,0)$ and $(p_1-b_1,0)$, we obtain
$$
\deg z_{\emptyset}(-b_1,0)\ge \deg z_{\emptyset}(p_1-b_1,0)+2,
$$
which is a contradiction. Therefore $p_1>2b_1$ and likewise $p_2>2b_2$.
\end{proof}

\begin{remark}
After combining Theorem \ref{bounds from tau} with Corollary \ref{tau invariant}, we obtain the statement of Theorem \ref{thm:taubound} stated in the introduction.
\end{remark}

\begin{example}
For the Whitehead link we have $b_1=b_2=0$, so $S^3_{p_1,p_2}(\L)$ is an L--space  if and only if 
$p_1,p_2>0$. See also \cite{LiuY1} for a detailed discussion of Heegaard Floer homology for surgeries on the Whitehead link.
\end{example}

\begin{example}
It is known \cite{LiuY2} that for $k>0$ the two-bridge link $b(4k^2+4k,-2k-1)$ is an L--space link with linking number zero. 
The corresponding $h$-function was computed in \cite{LiuY2,BG} (see also \cite[Example 4.1]{Liu}), and it is easy to see that $b_1=b_2=k-1$.
Therefore a $(p_1,p_2)$--surgery on $b(4k^2+4k,-2k-1)$ is an L--space if and only if $p_1,p_2>2k-2$.
\end{example}

For more general L--space links with linking number zero, we know that $H(0,0)\ge H_1(0)$ and $H(0,0)\ge H_2(0)$. 
If both of these inequalities are strict, then similarly to the proof of Theorem \ref{bounds from tau} 
one can prove that for L--space surgeries we must have $p_1,p_2>0$. In general, we have the following weaker results. 

\begin{proposition}
\label{not both negative}
Suppose that $\L$ is a nontrivial L--space link with linking number zero.
If $S^3_{p_1,p_2}(\L)$ is an L--space then either $p_1>0$ or $p_2>0$.
\end{proposition}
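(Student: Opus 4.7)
The plan is to prove the contrapositive: if $p_{1}<0$ and $p_{2}<0$, then $S^{3}_{p_{1},p_{2}}(\L)$ is not an L--space. The argument is a one-sided refinement of the L--space direction of the proof of Theorem \ref{bounds from tau}. Since $\L$ is nontrivial, $H(0,0)=h(0,0)>0$, and combined with the inequalities $H(0,0)\ge H_{1}(0)$ and $H(0,0)\ge H_{2}(0)$ noted just before the proposition, at least one of these must be strict. The degenerate equality case $H(0,0)=H_{1}(0)=H_{2}(0)>0$ is ruled out because the stabilization, symmetry, and controlled-growth properties of $h$ (Lemmas \ref{h-function bdy}, \ref{h-function symmetry}, \ref{h-function increase}) would then force $h(s_{1},0)$ and $h(0,s_{2})$ to be constantly equal to $h(0,0)>0$, which is incompatible with the finite-support description of $h$ coming from the Alexander polynomial (cf.\ \eqref{h from Alexander}). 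Without loss of generality, suppose $H(0,0)>H_{1}(0)$; I will show this forces $p_{2}>0$, contradicting $p_{2}<0$.

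Next, I compute $D(z_{\emptyset}(0,0))$ in the perturbed surgery complex. From the formulas in the proof of Proposition \ref{gradingchange}, the two terms landing in $\A^{01}$ (of the form $z_{2}(\cdot,\cdot)$) carry coefficient $U^{H(0,0)-H_{1}(0)}$ and are thus divisible by $U$. Write $D(z_{\emptyset}(0,0))=U\alpha+\gamma_{1}$, where $U\alpha$ collects the two $z_{2}$-terms and $\gamma_{1}$ collects the two $z_{1}$-terms (with coefficient $U^{H(0,0)-H_{2}(0)}$, which need not itself be divisible by $U$). To isolate $\alpha$ as a $U$-torsion homology class in the presence of $\gamma_{1}$, I filter $\widetilde{\C_Q}$ by the ``$L_{1}$-cube grading'' (the number of $L_{1}$ components forgotten, i.e., the first index in $\A^{\varepsilon_{1}\varepsilon_{2}}$); on the associated graded the $\Phi^{\pm L_{1}}$ components of the differential vanish, so $\gamma_{1}$ becomes a boundary there and $U\alpha=D(z_{\emptyset}(0,0))$ on this page, making $[\alpha]$ a $U$-torsion class. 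The L--space hypothesis forces $\alpha=D(\beta)$ for some 2-chain $\beta$ on the associated graded.

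Finally, I interpret this in the cellular picture of Section \ref{subsec:CW}. Since $p_{1},p_{2}<0$, we are in Case 2 of the truncation (Figure \ref{negative}), where no cells of the rectangle $R$ are erased. As in the end of the proof of Theorem \ref{bounds from tau}, the support of $\beta$ is forced to include every 2-cell in the $\spinc$-structure $(0,0)$ along the $s_{2}$-direction other than $(0,0)$ itself, and in order for $D(\beta)$ to match $\alpha$ without producing extra boundary contributions, the top and bottom edges of $R$ must be erased. By the truncation procedure of Section \ref{subsec:truncation} those edges are erased precisely when $p_{2}>0$, yielding the desired contradiction.

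The main obstacle is making the spectral-sequence step rigorous: in particular, the statement that L--spaceness of $\widetilde{\C_Q}$ passes, at least enough for the $U$-torsion bookkeeping, to the associated graded of the $L_{1}$-cube filtration. A cleaner alternative would be to directly exhibit a $U$-torsion class in the ungraded complex by combining $\alpha$ with an explicit compensating 1-chain for $\gamma_{1}$ constructed from the values of the $H$-function along the $s_{2}$-axis, though this requires more careful bookkeeping. A secondary subtlety is the preliminary exclusion of the degenerate equality case $H(0,0)=H_{1}(0)=H_{2}(0)$; while the Alexander polynomial generating function handles the unknotted-components case neatly, the general case may require invoking classification results for linking-number-zero L--space links.
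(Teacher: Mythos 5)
Your argument has a genuine gap at its central step, which you yourself flag: you cannot pass the L--space hypothesis to the associated graded of the $L_1$-cube filtration. The associated graded of $\widetilde{\C_Q}$ with respect to $\varepsilon_1$ splits as a direct sum of column complexes carrying only the $\Phi^{\pm L_2}$ differentials, and the homology of these columns typically has abundant $U$-torsion even when $H_*(\widetilde{\C_Q},D)$ is free --- that torsion is exactly what gets cancelled by the $\Phi^{\pm L_1}$ components on later pages of the spectral sequence. So the deduction ``$[\alpha]$ is $U$-torsion on the associated graded, hence zero'' does not follow from $S^3_{p_1,p_2}(\L)$ being an L--space, and the rest of the argument collapses. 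Two secondary problems: (i) your preliminary exclusion of the case $H(0,0)=H_1(0)=H_2(0)$ is not established --- the cited growth/symmetry/stabilization lemmas do not force $h(s_1,0)$ to be constant, and the paper deliberately states this proposition in the weak form ``$p_1>0$ or $p_2>0$'' precisely because one of the inequalities $H(0,0)\ge H_i(0)$ can be an equality (e.g.\ a split union of an L--space knot and an unknot); (ii) the final cellular step asserts without proof that a one-sided strict inequality forces erasure of exactly the top and bottom edges of $R$, whereas the erasure pattern in Section \ref{subsec:truncation} is dictated by the signs of $p_1,p_2$, not by the $H$-function, so what you actually need is a contradiction with the \emph{unerased} picture of Case 2.

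The paper's proof takes a different and more economical route that avoids $z_{\emptyset}(0,0)$ and any filtration. It first disposes of the case where both components are unknots by citing Theorem \ref{bounds from tau}; otherwise some component, say $L_1$, is a nontrivial L--space knot, so $H_1(0)>0$. Assuming $p_1,p_2<0$, one picks $s_2$ maximizing $\deg z_{2}(0,s_2)$ and notes $D(z_{2}(0,s_2))=U^{H_1(0)}\bigl(z_{1,2}(0,s_2)+z_{1,2}(p_1,s_2)\bigr)$. By Theorem \ref{square erase}(a) both $0$-cell classes are non-torsion of equal internal degree, so freeness of the homology forces their sum to bound a graded lift of a $1$-chain $\gamma$; any such $\gamma$ contains a horizontal edge $z_{2}(0,s_2')$, and comparing internal degrees (using $H_1(0)>0$) contradicts the maximality of $\deg z_{2}(0,s_2)$. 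If you want to salvage your approach, you would need to replace the spectral-sequence step with an explicit chain-level construction in the full complex, at which point you are essentially reproducing the paper's argument with extra bookkeeping.
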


\begin{proof}
If both $L_1$ and $L_2$ are unknots then the statement follows from Theorem \ref{bounds from tau}. Otherwise assume that $L_1$ is a nontrivial L--space knot, and so $H_1(0)>0$. Assume that both $p_1$ and $p_2$ are negative and $S^3_{p_1,p_2}(\L)$ is an L--space.

Let us choose $s_2$ such that $z_{2}(0,s_2)$ has maximal possible grading. We have 
$$
D(z_{2}(0,s_2))=U^{H_1(0)}(z_{1,2}(0,s_2)+z_{1,2}(p_1,s_2)).
$$
Since $p_1,p_2<0$, then by Theorem \ref{square erase} $z_{1,2}(0,s_2)$ and $z_{1,2}(p_1,s_2)$ are nonzero (and even non-torsion) in homology. They have the same degree, so their sum must vanish. This means that there exists a 1-chain $\gamma$ with endpoints at $(0,s_2)$ and $(p_1,s_2)$ such that its graded lift is bounded by $z_{1,2}(0,s_2)+z_{1,2}(p_1,s_2)$.

Such $\gamma$ must contain a segment connecting $(0,s'_2)$ and $(p_1,s'_2)$ for some $s'_2$, so its graded lift contains
$U^{k}z_{1}(0,s'_2)$ for some $k\ge 0$. Then
\begin{eqnarray*}
\deg z_{1}(0,s'_2)\ge \deg U^{k}z_{1}(0,s'_2)&=&\deg(z_{1,2}(0,s_2)+z_{1,2}(p_1,s_2))\\
 &>& \deg z_{1,2}(0, s_2) - 2H_1(0) = \deg z_{1}(0,s_2).
\end{eqnarray*}
Contradiction, since $z_{1}(0,s_2)$ had maximal possible grading. 
\end{proof}

\begin{proposition}
\label{lem:at least one}
Suppose that $\L$ is an L--space link with linking number zero.
If $S^3_{p_1,p_2}(\L)$ is an L--space then either $S^3_{p_1}(L_1)$ or $S^3_{p_2}(L_2)$ is an L--space.
\end{proposition}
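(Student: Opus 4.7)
The plan is to argue by contrapositive: assume that $S^{3}_{p_1,p_2}(\L)$ is an L--space and that neither $S^{3}_{p_1}(L_1)$ nor $S^{3}_{p_2}(L_2)$ is an L--space, and derive a contradiction by adapting the strategy of Proposition \ref{not both negative}. By Proposition \ref{not both negative} I may assume (after possibly swapping indices) that $p_1>0$, so the surgery complex truncation falls into case (b) or (c) of Theorem \ref{square erase}. Since $L_1$ and $L_2$ are sublinks of the L--space link $\L$, both are themselves L--space knots by Theorem \ref{l-space link cond}(a).

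Using the Ni--Wu surgery formula for L--space knots \cite{NiWu}, the non-L--space hypothesis translates into concrete witnesses on the $h$-functions $h_1,h_2$. For $S^{3}_{p_1}(L_1)$ (with $p_1>0$) this yields integers $s_1^+$ and $s_1^-=s_1^+-p_1$ with $h_1(s_1^\pm)>0$; for $S^{3}_{p_2}(L_2)$ we obtain either analogous witnesses $s_2^\pm$ if $p_2>0$, or the condition $h_2(0)>0$ if $p_2\le 0$ (in which case I take $s_2^+=s_2^-=0$). With these witnesses in hand, I would follow the template of Proposition \ref{not both negative}: choose a suitable generator in the perturbed truncated complex $\widetilde{\C_Q}$, apply the explicit differential from Proposition \ref{gradingchange}, and leverage the L--space condition to force certain $\F[U]$--module relations to be realized by bounding chains in $\widetilde{\C_Q}$. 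By the topology of the truncated CW-complex described in Section \ref{subsec:CW}, any such bounding chain must traverse cells at lattice points whose $h$-values are forced to be strictly positive by the witnesses. Tracking internal degrees via \eqref{deg z1 and z2}--\eqref{grading change 2} would then yield a grading inequality that contradicts the maximality in the choice of $s_1$ or $s_2$, in direct analogy with the final step in the proof of Proposition \ref{not both negative}.

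The principal obstacle is that, unlike the setting of Proposition \ref{not both negative} (case (a) of Theorem \ref{square erase}, where no boundary cells are erased and the non-torsion homology generator is a single 0-cell), here either the entire boundary of the truncated rectangle or a pair of opposite sides is erased. Accordingly, the non-torsion generator of $H_*(\widetilde{\C_Q},D)$ is a graded lift of a sum of 2-cells (case (b)) or a horizontal 1-chain (case (c)), and the ``non-torsion cancellation'' step must be rephrased in terms of detecting $U$--torsion elements in the homology. Concretely, I expect that $h_1(s_1^\pm)>0$ and the vertical witnesses combine to produce a $U$--torsion class in the graded lift of the standard cellular generator, obstructing L--spaceness. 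Executing this analysis in both sub-cases $p_2>0$ and $p_2\le 0$, and verifying that the required bounding chains genuinely pass through the witnessing cells in the rectangular truncations of Figures \ref{positive} and \ref{mix}, is where the bulk of the technical work will lie.
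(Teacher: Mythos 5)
Your setup is sound: reducing to the case where both components are nontrivial L--space knots, invoking Proposition \ref{not both negative} to get a positive surgery coefficient, and converting the hypothesis that neither $S^3_{p_i}(L_i)$ is an L--space into the positivity of $H_1(s_1)$, $H_1(p_1-s_1)$, $H_2(s_2)$, $H_2(p_2-s_2)$ at suitable lattice points (equivalently, $p_i\le 2g_i-2$, with $s_i=g_i-1$) is exactly the right input, and matches the paper. The gap is that you stop precisely where the proof has to happen, and the mechanism you sketch for that step does not match what actually works. There is no ``bounding chain traversing witnessing cells'' argument and no ``maximality of $s_1$ or $s_2$'' to contradict here; that template is adapted to Proposition \ref{not both negative} and Theorem \ref{bounds from tau}, where one compares internal degrees along paths. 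The argument needed for this proposition is purely local at a single $0$-cell: the generator $z_{1,2}(s_1,s_2)$ is a cycle, and by Proposition \ref{gradingchange} (the higher differentials vanishing by parity) it appears in the boundary of exactly four generators, namely $z_{1}(s_1,s_2)$, $z_{2}(s_1,s_2)$, $z_{1}(s_1,s_2-p_2)$, $z_{2}(s_1-p_1,s_2)$, with coefficients $U^{H_2(s_2)}$, $U^{H_1(s_1)}$, $U^{H_2(p_2-s_2)}$, $U^{H_1(p_1-s_1)}$. Under the contradiction hypothesis all four exponents are strictly positive, so $z_{1,2}(s_1,s_2)$ is not a boundary and represents a nonzero homology class; since $p_1>0$ or $p_2>0$, Theorem \ref{square erase} (cases (b) or (c)) shows every $0$-cell is $U$-torsion in homology, so this class is nontrivial torsion and the surgery is not an L--space.

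Two further points. First, your case split on the sign of $p_2$ is unnecessary: the single hypothesis $p_2\le 2g_2-2$ covers $p_2<0$ automatically once $g_2\ge 1$, and the local argument above is insensitive to which truncation case ((b) or (c)) one is in — which also dissolves your stated worry about whether bounding chains ``genuinely pass through the witnessing cells.'' Second, your phrase ``a $U$-torsion class in the graded lift of the standard cellular generator'' conflates two different things: the free part of $H_*(\widetilde{\C_Q},D)$ is generated by lifts of the cellular generator (a sum of $2$-cells or a $1$-chain), whereas the obstruction class lives at a $0$-cell and has nothing to do with that generator. As written, the proposal is a plausible plan whose decisive verification is deferred and whose proposed mechanism would need to be replaced by the local computation above to go through.
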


\begin{proof}
If $L_1$ or $L_2$ are unknots, the statement is clear. Suppose that both $L_1$ and $L_2$ are nontrivial with genera $g_1$ and $g_2$. Then we need to prove that either $p_1\ge 2g_1-1$ or $p_2\ge 2g_2-1$. Assume that, on the contrary, $p_1\le 2g_1-2$ and $p_2\le  2g_2-2$. 

Consider the generator $z_{1,2}(s_1,s_2)$. 
It appears in the boundary of $z_{1}(s_1,s_2)$ with coefficient $U^{H_2(s_2)}$, in the boundary of $z_{2}(s_1,s_2)$ with coefficient $U^{H_1(s_1)}$, in the boundary of $z_{1}(s_1, s_2-p_{2})$ with coefficient $U^{H_2(p_2-s_2)}$
and in the boundary  of $z_{2}(s_{1}-p_{1}, s_2)$ with coefficient $U^{H_1(p_1-s_1)}$. 
For $s_1=g_1-1,s_2=g_2-1$, by the assumptions we have $p_1-s_1\le g_1-1$ and $p_2-s_2\le g_2-1$.
Recall that for an L--space knot, 
\[
	g(K)=\nu^{+}(K)=\max\{s:H_{K}(s)>0\}+1.
\] 
Thus, since $L_1$ and $L_2$ are L--space knots, all four exponents $H_1(s_1),H_2(s_2),H_1(p_1-s_1),H_2(p_2-s_2)$ are strictly positive. 
Therefore the cycle $z_{1,2}(s_1,s_2)$ does not appear in the boundary of any chain and hence is nontrivial in homology. On the other hand, by Lemma \ref{not both negative} either $p_1$ or $p_2$ is positive, so by Theorem \ref{square erase} $z_{1,2}(s_1,s_2)$ is a torsion class. Therefore $z_{1,2}(s_1,s_2)$ is a nontrivial torsion class, and $S^3_{p_1,p_2}(\L)$ is not an L--space. Contradiction.
\end{proof}

\begin{remark}
The examples considered in \cite{GN:set,Sarah} show that for many L--space links it is possible to have L--space surgeries with $p_1>0$ and $p_2<0$. For 2-component L--space links with linking number zero, this is not possible (see \cite{Liu19}). For general 2-component L--space links, there are similar results to the ones in   Propositions \ref{not both negative} and \ref{lem:at least one} \cite{Liu19}. 
\end{remark}


\section{Relationship with the Sato-Levine and Casson invariants}
\label{sec:relationships}

\subsection{Sato-Levine invariant}
\label{sec:satolevine}

Let $\L=L_{1}\cup L_{2}$ denote a 2-component link with linking number zero. Then for $i=1, 2$, component $L_{i}$ bounds a Seifert surface $\Sigma_{i}$ in $B^{4}$ such that $\Sigma_{i}\cap L_{j}=\varnothing$ for $i\neq  j$. Let $L_{12}=\Sigma_{1}\cap \Sigma_{2}$ denote the link with framing induced from $\Sigma_{1}$ (or $\Sigma_{2}$). The self-intersection number of $L_{12}$ is called the \emph{Sato-Levine invariant} $\beta(\L)$, due to Sato \cite{Sato} and independently Levine (unpublished). 

The Conway polynomial of $\L$ of $n$ components is
\[
	\nabla_\L (z) = z^{n-1}(a_0 + a_2z^2 + a_4z^4 + \cdots ), \qquad a_i\in\Z.
\]
We will write $a_i(\L) = a_i$ when we want to emphasize the link. For a link $\L$ of two components, we normalize the Conway polynomial so that 
\[
	\nabla_\L(t^{1/2}-t^{-1/2})=-(t^{1/2}-t^{-1/2})\Delta_{\L}(t, t),
\]
where $\Delta_{\L}(t_1, t_2)$ denotes the multi-variable Alexander polynomial of $\L$. 
The first coefficient $a_0$ is $-lk(L_1, L_2)$ by \cite{Hoste:Conway}. When $a_{0}=0$, write $\tilde{\nabla}_\L(z) = \nabla_\L(z)/z^3$. Then $\tilde{\nabla}_\L(0) = a_{2}=-\beta(\L)$ by \cite{Sturm}. 

Since $lk(L_1, L_2)=0$, the Torres conditions \cite{Torres},
\[
\Delta_{\L}(t_1, 1)=\dfrac{1-t_1^{lk(L_{1}, L_{2})}}{1-t_1} \Delta_{L_{1}}(t_1), \qquad
\Delta_{\L}(1, t_2)=\dfrac{1-t_2^{lk(L_{1}, L_{2})}}{1-t_2} \Delta_{L_{1}}(t_2),
\]
imply that $\Delta_{\L}(t_1, 1)=0$ and $\Delta_{\L}(1, t_2)=0$. Hence, we can write
\[
\Delta_{\L}(t_1, t_2) = t_1^{-1/2}t_2^{-1/2}(t_1-1)(t_2-1)\tilde{\Delta}'_\L(t_1, t_2),
\]
where $\Delta_\L$ is normalized as in equation \eqref{mva}.
\begin{lemma}
Let $\L=L_{1}\cup L_{2}$ be a link with linking number zero. Then 
\[
\beta(\L)=\tilde{\Delta}'_\L(1, 1).
\]

\end{lemma}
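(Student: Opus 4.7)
The plan is to work directly from the two normalizations given: the Conway polynomial relation $\nabla_{\L}(t^{1/2}-t^{-1/2}) = -(t^{1/2}-t^{-1/2})\Delta_{\L}(t,t)$, and the factorization $\Delta_{\L}(t_1,t_2) = t_1^{-1/2}t_2^{-1/2}(t_1-1)(t_2-1)\tilde{\Delta}'_{\L}(t_1,t_2)$ that is forced by the Torres conditions when $lk(L_1,L_2)=0$.

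First I would specialize the factorization by setting $t_1=t_2=t$, getting
\[
\Delta_{\L}(t,t) = t^{-1}(t-1)^{2}\,\tilde{\Delta}'_{\L}(t,t) = (t^{1/2}-t^{-1/2})^{2}\,\tilde{\Delta}'_{\L}(t,t).
\]
Substituting this into the Conway relation gives
\[
\nabla_{\L}(z) = -(t^{1/2}-t^{-1/2})^{3}\,\tilde{\Delta}'_{\L}(t,t) = -z^{3}\,\tilde{\Delta}'_{\L}(t,t),
\]
where $z = t^{1/2}-t^{-1/2}$.

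Next I would use the hypothesis $lk(L_1,L_2)=0$, which forces $a_0(\L)=0$, so $\nabla_{\L}(z)=z^{3}(a_2+a_4 z^{2}+\cdots)$ and therefore $\tilde{\nabla}_{\L}(z) := \nabla_{\L}(z)/z^{3}$ is a polynomial in $z^{2}$. Combining with the previous equation, $\tilde{\nabla}_{\L}(z) = -\tilde{\Delta}'_{\L}(t,t)$ as functions of $t$ via $z = t^{1/2}-t^{-1/2}$. Evaluating at $z=0$, which corresponds to $t=1$, gives $\tilde{\nabla}_{\L}(0) = -\tilde{\Delta}'_{\L}(1,1)$.

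Finally, I would invoke the cited facts that $\tilde{\nabla}_{\L}(0) = a_{2}(\L)$ \cite{Sturm} and $\beta(\L) = -a_{2}(\L)$, yielding $\beta(\L) = \tilde{\Delta}'_{\L}(1,1)$. There is no real obstacle here; the only mild care needed is checking that the factor $t_1^{-1/2}t_2^{-1/2}(t_1-1)(t_2-1)$ combines correctly with the Conway prefactor $-(t^{1/2}-t^{-1/2})$ to produce exactly $-z^{3}$, i.e.\ tracking the half-integer exponents so that the specialization $t_1=t_2=t$ yields a polynomial identity and the evaluation at $t=1$ is unambiguous.
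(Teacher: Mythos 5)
Your proof is correct and follows essentially the same route as the paper: specialize the Torres factorization at $t_1=t_2=t$ so that $\Delta_{\L}(t,t)=(t^{1/2}-t^{-1/2})^2\tilde{\Delta}'_{\L}(t,t)$, combine with the Conway normalization to get $\tilde{\nabla}_{\L}(z)=-\tilde{\Delta}'_{\L}(t,t)$, and evaluate at $t=1$ using $\tilde{\nabla}_{\L}(0)=a_2=-\beta(\L)$. The bookkeeping of half-integer exponents is exactly as you describe and presents no issue.
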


\begin{proof}
After setting $t_1=t_2=t$ to obtain the single variable Alexander polynomial, we have
\[
\Delta_\L(t, t) = (t^{1/2} - t^{-1/2})^2 \tilde{\Delta}'_\L(t, t) = -z^2 \tilde{\nabla}_\L(z)
\]
where the last equality is with the change of variable $z=t^{1/2} - t^{-1/2}$. Setting $t=1$ we obtain $\tilde{\Delta}'_\L(1, 1) = -\tilde{\nabla}_\L(0) =\beta(\L)$.
\end{proof}

\begin{lemma}
Let $\L=L_{1}\cup L_{2}$ be an L-space link with linking number zero. Then $\beta=-\sum_{s_1,s_2} h'(s_1, s_2)$ where $h'(s_1,s_2)=h(s_1,s_2)-h_1(s_1)-h_2(s_2)$.
\end{lemma}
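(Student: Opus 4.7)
My plan is to imitate the derivation of \eqref{h from Alexander} but with $h'$ in place of $h$, so that the $H$--functions of the components $L_1$, $L_2$ appear as boundary data instead of the unknot $H$--functions. Concretely, I would proceed in four steps.

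\textbf{Step 1: Rewrite $h'$ in terms of the $H$--function of $\L$.} Unwinding the definitions, $h_i(s_i) = H_i(s_i) - H_O(s_i)$ and $h(s_1,s_2) = H(s_1,s_2) - H_O(s_1) - H_O(s_2)$, so the $H_O$ contributions cancel and
\[
h'(s_1,s_2) = H(s_1,s_2) - H_1(s_1) - H_2(s_2).
\]
This simplification is the whole point of introducing $h'$: the stabilization property \ref{h-function bdy} gives $H(s_1,N)=H_1(s_1)$ and $H(N,s_2)=H_2(s_2)$ for $N$ large, so applying the two--variable version of \eqref{computation of h-function 1} (i.e.\ \eqref{chi from H 2 comp}) and telescoping yields
\[
h'(s_1,s_2) = -\sum_{\bm{s}'\succeq \bm{s}+\bm{1}} \chi\bigl(HFL^{-}(\L,\bm{s}')\bigr).
\]
This step is identical to what was done in the proof of \eqref{h from Alexander}, except that we no longer need $L_1,L_2$ to be unknots.

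\textbf{Step 2: Check that $h'$ has finite support.} By the stabilization property $H(s_1,s_2)=H_1(s_1)$ for $s_2\gg 0$ and $H_2(s_2)=0$ for $s_2\gg 0$, so $h'(s_1,s_2)=0$ for $s_2\gg 0$. Using the symmetry $H_{\L}(-\bm{s})=H_{\L}(\bm{s})+s_1+s_2$ of Lemma \ref{h-function symmetry} together with stabilization, the same vanishing holds for $s_2\ll 0$ and for $|s_1|\gg 0$. Hence the sum $\sum_{s_1,s_2} h'(s_1,s_2)$ converges as a finite sum, and the generating series below is a Laurent polynomial.

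\textbf{Step 3: Compute the generating series.} Repeating the calculation from the proof of \eqref{h from Alexander} with $H_1(s_1)+H_2(s_2)$ replacing $H_O(s_1)+H_O(s_2)$, together with \eqref{computation 3}, gives
\[
\sum_{s_1,s_2} t_1^{s_1} t_2^{s_2}\, h'(s_1,s_2) = -\frac{t_1^{-1} t_2^{-1}}{(1-t_1^{-1})(1-t_2^{-1})}\, \widetilde{\Delta}_{\L}(t_1,t_2).
\]
Now substitute the factorization $\widetilde{\Delta}_{\L}(t_1,t_2) = (t_1-1)(t_2-1)\tilde{\Delta}'_{\L}(t_1,t_2)$ coming from the vanishing linking number (via the Torres conditions), and use the algebraic identity $t_i^{-1}(t_i-1) = 1 - t_i^{-1}$ to cancel. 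This yields the clean formula
\[
\sum_{s_1,s_2} t_1^{s_1} t_2^{s_2}\, h'(s_1,s_2) = -\tilde{\Delta}'_{\L}(t_1,t_2).
\]

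\textbf{Step 4: Specialize.} Setting $t_1=t_2=1$ on both sides and invoking the preceding lemma $\beta(\L)=\tilde{\Delta}'_{\L}(1,1)$ gives the claim
\[
\beta(\L) = \tilde{\Delta}'_{\L}(1,1) = -\sum_{s_1,s_2} h'(s_1,s_2).
\]

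The only real subtlety is making sure the formal--series manipulations in Step 3 are justified. This is where the L--space hypothesis (through $\chi(HFL^{-})$ being captured by the Alexander polynomial and through compact support of $h'$ from Step 2) is used; everything else is a straightforward adaptation of \eqref{h from Alexander}.
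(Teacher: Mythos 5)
Your argument is correct and is essentially the paper's proof recast in generating-function language: where you divide the series identity for $\widetilde{\Delta}$ by $(1-t_1^{-1})(1-t_2^{-1})$ and cancel against the factor $(t_1-1)(t_2-1)$ coming from the Torres conditions, the paper equivalently matches the second differences of the coefficients $q_{s_1,s_2}$ of $\tilde{\Delta}'_\L$ with those of $-h'(s_1,s_2)$ and uses vanishing at infinity to conclude $q_{s_1,s_2}=-h'(s_1,s_2)$. Both routes then finish by evaluating at $t_1=t_2=1$ via $\beta(\L)=\tilde{\Delta}'_\L(1,1)$, so no further comparison is needed.
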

Note that by stabilization (Lemma \ref{lem: h increases}) and Lemma \ref{h-function bdy}, $h'(s_1, s_2)$ has finite support, so the above sum makes sense.
\begin{proof}
Since
\[
\tilde{\Delta}'_\L(t_{1}, t_{2})=\sum q_{s_1, s_2}t_{1}^{s_1}t_{2}^{s_2},
\]
and
\[
\tilde{\Delta}_\L(t_{1}, t_{2})=(t_{1}-1)(t_{2}-1)\tilde{\Delta}'_\L(t_{1}, t_{2})=\sum a_{s_1, s_2}t_{1}^{s_1}t_{2}^{s_2},\]
the coefficients are related by
\[
a_{s_1, s_2}=q_{s_1, s_2}-q_{s_{i}-1, s_2}-q_{s_1, s_{2}-1}+q_{s_{1}-1, s_{2}-1}.
\]
Recall that the inclusion-exclusion formula \eqref{computation of h-function 1} gives the coefficients of the Alexander polynomial in terms of the $h$-function of $\L$ as
\begin{multline*}
a_{s_1, s_2}=\chi(HFL^{-}(\L, (s_1, s_2)))=\\
-H(s_1, s_2)+H(s_1-1, s_2)+H(s_1, s_2-1)-H(s_1-1, s_2-1).
\end{multline*}
Observe that $h'(s_1, s_2)$, as defined above, can also be written
\[
	h'(s_1, s_2)=H(s_1, s_2)-H_{1}(s_1)-H_{2}(s_2)
\] 
where $H_1$ and $H_2$ denote the $H$-function of $L_1$ and $L_2$, respectively. Then 
\begin{eqnarray*}
 a_{s_{1}, s_{2}} &=& -h'(s_1, s_2)+h'(s_1-1, s_2)+h'(s_1, s_2-1)-h'(s_1-1, s_2-1)\\
 &=& q_{s_1,s_2}-q_{s_{1}-1, s_{2}}-q_{s_{1}, s_{2}-1}+q_{s_{1}-1, s_{2}-1}.
\end{eqnarray*}
Note that when $L_1$ and $L_2$ are both unknots, $h'(s_1, s_2)=h(s_1, s_2)$. 

Observe that 
$q_{s_1, s_2}=0$ as $s_1\rightarrow \pm \infty$ and $s_2\rightarrow \pm \infty$, and $h'(s_1, s_2)=0$ as $s_1\rightarrow \pm \infty$ and $s_2\rightarrow \pm \infty$. Therefore,
\[ q_{s_1, s_2}= -h'(s_1, s_2). \]
Hence,
\begin{equation}
\label{eqn:SLh}
	\beta(\L)=\tilde{\Delta}'_\L(1, 1)=\sum q_{s_1, s_2}=-\sum h'(s_1, s_2). \qedhere
\end{equation}
\end{proof}

\begin{remark}
\label{knot coeff}
Similarly, for a knot we have that $a_2=\sum_{s}h(s)$, where $a_2$ is the second coefficient of the Conway polynomial.
\end{remark}

\begin{corollary}
\label{beta nonnegative}
If $\L=L_{1}\cup L_{2}$ is an L--space link with vanishing linking number and $L_i$ are unknots for all $i=1,2$, then $\beta(\L)\le 0$ and $\beta(\L)=0$ if and only if $\L$ is an unlink.
\end{corollary}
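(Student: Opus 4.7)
The plan is to combine equation \eqref{eqn:SLh} with the nonnegativity of $h$ (Corollary \ref{h nonnegative}) and the characterization of the unlink via $h(0,0) = 0$ stated just after Example \ref{wh H}.

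First I would specialize formula \eqref{eqn:SLh}. Since $L_1$ and $L_2$ are unknots, the single-component $h$-functions vanish identically: $h_1(s_1) = 0$ and $h_2(s_2) = 0$ for all $s_1, s_2$. Therefore
\[
h'(s_1,s_2) = h(s_1,s_2) - h_1(s_1) - h_2(s_2) = h(s_1,s_2),
\]
and the formula \eqref{eqn:SLh} becomes
\[
\beta(\L) = -\sum_{(s_1,s_2) \in \Z^2} h(s_1,s_2).
\]
This sum is finite because $h_1 = h_2 = 0$ combined with Lemma \ref{h-function bdy} forces $h(s_1,s_2)$ to vanish whenever $|s_1|$ or $|s_2|$ is sufficiently large.

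Next, by Corollary \ref{h nonnegative} every term $h(s_1,s_2)$ is nonnegative, hence $\beta(\L) \le 0$, giving the first assertion.

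For the equality case, suppose $\beta(\L) = 0$. Then the sum of nonnegative terms vanishes, so $h(s_1,s_2) = 0$ for every $(s_1,s_2) \in \Z^2$; in particular $h(0,0) = 0$. By the lemma stated immediately before Section \ref{sec:linksurgery} (``If for an L--space link $\L$ one has $h(0,0)=0$ then $\L$ is the unlink''), this forces $\L$ to be the two-component unlink. The converse is immediate since the unlink has $h \equiv 0$ and hence $\beta = 0$. The entire argument is short and contains no real obstacle; the only subtlety is appealing to the earlier characterization of the unlink, which itself rests on \cite[Theorem 1.3]{Liu}.
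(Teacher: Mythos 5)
Your proposal is correct and follows essentially the same route as the paper: specialize $h'=h$ using that the components are unknots, apply \eqref{eqn:SLh} together with Corollary \ref{h nonnegative} to get $\beta(\L)\le 0$, and in the equality case conclude $h\equiv 0$ and invoke the characterization of the unlink resting on \cite[Theorem 1.3]{Liu}. The only cosmetic difference is that the paper cites \cite{Liu} directly rather than routing through the intermediate lemma on $h(0,0)=0$, which amounts to the same thing.
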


\begin{proof}
Since $L_i$ are unknots, we have $h'(i,j)=h(i,j)$ for all $i, j$. 
By Corollary \ref{h nonnegative}, $\beta(\L)=-\sum_{i,j} h(i, j)\leq 0$. If  $\beta(\L)=0$ then $h(i,j)=0$ for all $(i,j)\in \Z^{2}$. Since $\L$ is an L--space link,  $\L$ is an unlink \cite{Liu}. \end{proof}

A link $\L$ is called a \emph{boundary link} if its components $L_1$ and $L_2$ bound disjoint Seifert surfaces in $S^3$.

\begin{corollary}
If $\L=L_{1}\cup L_{2}$ is an L--space link with vanishing linking number and $L_i$ are unknots for all $i=1, 2$, then $\L$ is concordant to a boundary link if and only if $\L$ is an unlink. 
\end{corollary}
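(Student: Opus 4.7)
The plan is to reduce the statement to the preceding Corollary \ref{beta nonnegative} by showing that the Sato-Levine invariant vanishes on any link concordant to a boundary link.

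First I would recall two well-known facts about the Sato-Levine invariant $\beta$: (i) $\beta$ is a concordance invariant of two-component links with linking number zero, and (ii) $\beta$ vanishes on boundary links. For (ii), if $L_1$ and $L_2$ bound disjoint Seifert surfaces $\Sigma_1,\Sigma_2\subset S^3$, then after pushing these surfaces into $B^4$ one obtains disjoint surfaces with $L_{12}=\Sigma_1\cap\Sigma_2=\varnothing$, so the self-intersection number defining $\beta$ is zero. Both facts are classical and go back to Sato's original paper; alternatively they follow at once from the formula $\beta(\L)=\tilde{\Delta}'_\L(1,1)$ proved above, together with the concordance invariance of the Alexander polynomial modulo the appropriate ideal.

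Given these, the forward implication is immediate: if $\L$ is concordant to a boundary link $\L'$, then $\beta(\L)=\beta(\L')=0$. Since $\L$ is assumed to be an L--space link with vanishing linking number and unknotted components, Corollary \ref{beta nonnegative} applies and forces $\L$ to be an unlink. The reverse implication is trivial, since the unlink is itself a boundary link (each unknotted component bounds a disjoint disk in $S^3$), hence it is concordant to a boundary link via the identity concordance.

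There is no real obstacle here; the entire content has been set up by the previous corollary. The only substantive observation is that the concordance hypothesis is used solely to transport the vanishing of $\beta$ from a boundary link back to $\L$. One could also phrase the proof more self-containedly by invoking the formula $\beta(\L)=-\sum_{s_1,s_2}h'(s_1,s_2)=-\sum_{s_1,s_2}h(s_1,s_2)$ (the second equality since $L_1,L_2$ are unknots) together with $h(\bm{s})\ge 0$, to conclude that $\beta(\L)=0$ forces $h\equiv 0$, and then applying \cite[Theorem 1.3]{Liu} to conclude $\L$ is the unlink.
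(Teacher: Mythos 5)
Your proposal is correct and follows essentially the same argument as the paper: $\beta$ vanishes on boundary links, is a concordance invariant, and so vanishes on $\L$, whence Corollary \ref{beta nonnegative} forces $\L$ to be an unlink; the converse is immediate since the unlink is a boundary link. No gaps.
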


\begin{proof}
Clearly the unlink is a boundary link, so instead assume that $\L$ is concordant to a boundary link. For boundary links $\beta$ vanishes by definition. Since $\beta$ is a concordance invariant \cite{Sato}, we get $\beta(\L)=0$. By Corollary \ref{beta nonnegative} we have that $\L$ is an unlink.
\end{proof}


\subsection{Casson invariant}
\label{sec:casson}

Here we assume that $\L=L_{1}\cup L_{2}\cdots \cup L_{n}$ is an oriented link in an integer homology sphere $Y$ with all pairwise linking numbers equal zero, and with framing $1/q_i$ on component $L_i$, for $q_i\in\Z$. Hoste \cite{Hoste:Casson} proved that the Casson invariant $\lambda$ of the integer homology sphere $Y_{1/q_1, \cdots, 1/q_n}(\L)$ satisfies a state sum formula,
\begin{equation}
\label{statesum}
	\lambda(Y_{1/q_1, \cdots, 1/q_n}(\L) ) = \lambda(Y) + \sum_{\L'\subset\L}\left( \prod_{i\in\L'}q_i \right) a_2(\L';Y), 
\end{equation}
where the sum is taken over all sublinks $\L'$ of $\L$. 
For example, given a two-component link $\L = L_1\cup L_2$ in $S^3$ with framings $p_i = +1$, formula \eqref{statesum} simplifies to
\begin{eqnarray}
\label{link casson}
	\lambda(S^3_{p_1, p_2}(\L) ) = - \beta(\L) + a_2(L_1) + a_2(L_2).
\end{eqnarray}
By \os~\cite[Theorem 1.3]{OS:Absolutely}, the Casson invariant agrees with the renormalized Euler characteristic of $HF^+(Y)$,
\begin{equation*}
\label{cassonhf+}
	\lambda(Y) = \chi(HF^+_{red}(Y)) - \frac{1}{2}d(Y),
\end{equation*}
where we omit the notation for the unique $\spinc$-structure.
In terms of the renormalized Euler characteristic for $HF^-(Y)$, we have
\begin{equation*}
\label{cassonhf-}
	\lambda(Y) = -\chi(HF^-_{red}(Y)) - \frac{1}{2}d(Y).
\end{equation*}
where the change in sign is due to the long exact sequence $HF_i^-(Y)\rightarrow HF_i^\infty(Y)\rightarrow HF_i^+(Y)\rightarrow HF_{i-1}^-(Y)$. As in \cite[Lemma 5.2]{OS:Absolutely}, the renormalized Euler characteristic can also be calculated using the finite complex
\begin{equation}
\label{renorm-trunc}
	\lambda(Y) =  -\chi ( HF^-(Y_{gr> -2N-1})) + N+1,
\end{equation}
which has been truncated below some grading $-2N-1$ for $N>>0$.
This can be observed by writing 
\begin{equation}
\label{trunc}
	\chi ( HF^-(Y_{gr> -2N-1})) = \chi(\F[U] / U^{k+1}) +  \chi(HF^-_{red}(Y)),
\end{equation}
where $k = \frac{1}{2}d(Y) + N$, and noting that $d(Y)$ is even because $Y$ is an integer homology sphere. 

\begin{remark}
In \cite{OS:Absolutely} Ozsv\'ath and Szab\'o use the renormalized Euler characteristic for $HF^+$ instead of $HF^-$.
From the long exact sequence one sees that these two Euler characteristics add up to the renormalized Euler characteristic of $HF^{\infty}$ (truncated both at sufficiently large positive and negative degrees), which vanishes. 
This explains the sign change between  \eqref{renorm-trunc} and \cite[Lemma 5.2]{OS:Absolutely}.
\end{remark}

\subsection{The Casson invariant from the $h$-function for knots.}
We will review how to obtain Casson invariant from the $H$-function for $Y=S^3_{\pm1}(K)$ using the mapping cone.
\begin{lemma}
\label{casson knot}
Consider $\pm1$ surgery along a knot $K$ in $S^3$. Then
\[
 \lambda(S^3_{\pm1}(K)) = \sum_{s} \pm h(s)  \mp \sum_s \chi ({\A^0_s})_{tor},
\]
where $(\A^0_s)_{tor}$ denotes the torsion summand of $\A^0_s$ and its Euler characteristic is taken with respect to internal degree. In particular, when $K$ is an L--space knot, $\lambda(S^3_{\pm1}(K)) = \sum_{s} \pm h(s)$.
\end{lemma}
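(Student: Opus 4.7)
The plan is to combine Hoste's state-sum formula \eqref{statesum} with an Alexander-polynomial computation expressing $a_2(K)$ in terms of the $H$-function and the torsion of $\A^0_s$. Specializing \eqref{statesum} to a single-component link (with no proper sublinks to sum over) immediately gives $\lambda(S^3_{\pm 1}(K)) = \pm a_2(K)$, so the lemma reduces to proving
\[
a_2(K) = \sum_s h(s) - \sum_s \chi\bigl((\A^0_s)_{tor}\bigr).
\]

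The starting point is identity \eqref{computation 3} specialized to a knot, namely $\widetilde{\Delta}_K(t) = \sum_s t^s \chi(HFK^-(K,s))$. Using the short exact sequence $0 \to \A^0_{s-1} \to \A^0_s \to gr_s \to 0$ of the knot filtration together with the decomposition $H_*(\A^0_s) \cong \F[U]_{-2H(s)} \oplus (\A^0_s)_{tor}$, and noting that the renormalized Euler characteristic of a tower $\F[U]_{g}$ is $g/2$ modulo an $s$-independent constant, one obtains
\[
\chi(HFK^-(K,s)) = H(s-1) - H(s) + \chi\bigl((\A^0_s)_{tor}\bigr) - \chi\bigl((\A^0_{s-1})_{tor}\bigr).
\]
Summing over $s$ with the usual Abel index-shift yields
\[
\widetilde{\Delta}_K(t) = (t-1)\Bigl[\sum_s t^s H(s) - \sum_s t^s \chi\bigl((\A^0_s)_{tor}\bigr)\Bigr],
\]
so $\Delta_K(t) = (1-t^{-1})\widetilde{\Delta}_K(t) = z^2\bigl[\sum_s t^s H(s) - \sum_s t^s \chi((\A^0_s)_{tor})\bigr]$ with $z = t^{1/2}-t^{-1/2}$, using the factorization $(1-t^{-1})(t-1)=t-2+t^{-1}=z^2$.

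Finally, $a_2(K)$ is extracted as the $z^2$-coefficient of $\nabla_K(z) = \Delta_K(t(z)) = 1 + a_2 z^2 + O(z^4)$. The unknot identity $\sum_s t^s H_O(s) = t/(t-1)^2 = 1/z^2$, which follows directly from $H_O(s) = \max(0,-s)$ and a geometric series computation, exactly accounts for the divergent pole at $z=0$. Writing $H(s) = h(s) + H_O(s)$, the identity becomes
\[
\nabla_K(z) = 1 + z^2\Bigl[\sum_s t^s h(s) - \sum_s t^s \chi\bigl((\A^0_s)_{tor}\bigr)\Bigr],
\]
and evaluating the bracket at $t=1$ (where both sums converge by the finite support of $h$ and of $\chi((\A^0_s)_{tor})$) yields
\[
a_2(K) = \sum_s h(s) - \sum_s \chi\bigl((\A^0_s)_{tor}\bigr),
\]
as desired. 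For L--space knots, Theorem \ref{l-space link cond}(b) forces $(\A^0_s)_{tor}=0$ for all $s$, and the formula collapses to $\lambda(S^3_{\pm 1}(K)) = \pm\sum_s h(s)$, consistent with Remark \ref{knot coeff}. The main obstacle is justifying the formal manipulation of the divergent series $\sum_s t^s H(s)$; this becomes a rigorous Laurent series identity at $t=1$ precisely after isolating the unknot pole $1/z^2$, at which point the finite-support functions $h(s)$ and $\chi((\A^0_s)_{tor})$ provide a well-defined evaluation.
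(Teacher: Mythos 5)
Your proposal is correct, but it takes a genuinely different route from the paper's. The paper never passes through the Conway polynomial: it computes $\lambda(S^3_{\pm1}(K))$ directly from the truncated mapping cone, using the Ozsv\'ath--Szab\'o identity $\lambda(Y) = -\chi(HF^-_{red}(Y)) - \tfrac12 d(Y)$ in its truncated form \eqref{renorm-trunc}, summing the renormalized Euler characteristics $\chi(\A^0_s)_{>-2N-1} = N+1+\tfrac12\deg z_1(s) - H(s) + \chi(\A^{0}_{s})_{tor}$ and $\chi(\A^1_s)_{>-2N-1}$ over the truncation range, and normalizing against the unknot; the agreement with Hoste's state sum is only observed at the very end as a consistency check. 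You invert this logic: you take Hoste's formula \eqref{statesum} (equivalently, Casson's surgery formula $\lambda(S^3_{\pm1}(K))=\pm a_2(K)$) as the starting point and reduce the lemma to the identity $a_2(K)=\sum_s h(s)-\sum_s\chi((\A^0_s)_{tor})$, which you prove by tracking the torsion correction to $\chi(HFK^-(K,s))=H(s-1)-H(s)$ through the Euler-characteristic formula \eqref{computation 3} and isolating the unknot pole $1/z^2$. Both arguments are sound and rest on the same local computation (free tower with top degree $-2H(s)$ plus torsion); they differ in the external input ($\lambda=-\chi(HF^-_{red})-d/2$ versus the Casson surgery formula). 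Your route has the side benefit of making explicit that Remark \ref{knot coeff} acquires the correction $-\sum_s\chi((\A^0_s)_{tor})$ for non-L--space knots; the paper's route is the one that generalizes directly to Proposition \ref{cassonlink}, where one sums local Euler characteristics of squares in the truncated two-dimensional complex rather than manipulating the multivariable Alexander polynomial. Two small points worth stating to make your argument airtight: the internal degree of the statement differs from the Maslov grading on $\A^0_s\subset CF^-(S^3)$ only by the even shift $\deg z_1(s)$, so your Euler characteristics computed in the Maslov grading carry the correct signs; and the evaluation at $t=1$ is legitimate because $\sum_s t^s h(s)-\sum_s t^s\chi((\A^0_s)_{tor})$ has finite support, so $a_2=\lim_{t\to 1}(\Delta_K(t)-1)/z^2$ equals its value at $t=1$ with no appeal to symmetry of the torsion term.
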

\begin{proof}
Apply observation \eqref{renorm-trunc} to the truncated cone complex $(\C_b, D)$, as defined in Section \ref{subsec:knots surgery}. This complex has now been truncated in two directions: it is truncated so that $-b\leq s \leq b$, for $s\in\Z \cong \relspinc(Y, K)$, and is truncated in every summand so that $gr(x)\geq -2N-1$, $N>>0$ for all chains $x\in\C_b$. Recall from section \ref{subsec:gradings} that the homological degree on the surgery complex is a sum of the internal degree and the cube degree. In particular, each of the summands $\A^0_s$ (in cube degree 1) and $\A^1_s$ (in cube degree 0) has an internal degree $\deg$ that is itself a sum of the Maslov grading and a shift by $\deg z_1(s)$ which does not depend on the knot. 
By Proposition \ref{gradingchange}, $\deg z_0(s) = \deg z_1(s) - 2H(s)$. Combining with equation \eqref{trunc} we calculate the Euler characteristic with respect to internal degree as
\[
	\chi( \A^0_{s})_{>-2N-1} = N+1+\frac{1}{2}\deg z_1(s)-H(s)+ \chi(\A^{0}_{s})_{tors},
\]
\[
	\chi(\A^1_{s})_{>-2N-1} = N+1+\frac{1}{2}\deg z_1(s).
\]
Let $p=+1$, then 
\[
	 \chi(HF^-(Y_{gr> -2N-1})) =   \sum_{-b \leq s \leq b}(-H(s) + \chi (\A^{0}_{s})_{tor})+N+1+\dfrac{1}{2}\deg z_1(-b).\\
\]
where the last two terms  come from $\A^0_{-b}$. 

By  \eqref{renorm-trunc} we obtain:
\[
	\lambda(S^3_{+1}(K))  = \sum_{-b \leq s \leq b}( H(s) - \chi (\A^{0}_{s})_{tor})-\frac{1}{2}\deg z_1(-b).
\]
By taking $K$ to be the unknot $O$ we similarly obtain
\[
	\lambda(S^3_{+1}(O)) = \sum_{-b \leq s \leq b} H_{O}(s) - \frac{1}{2}\deg z_{1}(-b)
\]
where $H_{O}(s_{i})$ denotes the $H$-function for the unknot. 
Noting that $S^3_{+1}(O) = S^3$ and that $\lambda(S^3)$ vanishes, we have
\[
 \lambda(S^3_{+1}(K)) = \sum_{-b \leq s \leq b} (H(s) -H_O(s)-  \chi (\A^{0}_{s})_{tor}) = \sum_{s} (h(s)  - \chi (\A^{0}_{s})_{tor}).
\]

The case of $(-1)$--surgery is similar, except that in the mapping cone there is one extra $\A^1$ summand and 
$\A^0$ and $\A^1$  switch parity,
 so that we obtain the equation
\[
 \lambda(S^3_{-1}(K)) = \sum_{-b \leq s \leq b} (-H(s) +H_O(s) +  \chi (\A^{0}_{s})_{tor}) = \sum_{s} (-h(s)  +\chi (\A^{0}_{s})_{tor}).
\]
Finally, notice that when $K$ is an L--space knot, $\chi (\A^{0}_{s})_{tor}$ vanishes. We can see that this agrees with the state sum property \eqref{statesum} of the Casson invariant,
\[
	\lambda(S^3_{1/q}(K)) -\lambda(S^3) = q a_2 (K) = \pm \sum_s h(s),
\]
in the special case $q=\pm1$. 
\end{proof}

\subsection{The Casson invariant from the $h$-function for links.}
For a 2-component link $\L=L_{1}\cup L_{2}$ with vanishing linking number, we can now describe the Casson invariant of $(\pm1, \pm1)$-surgery in terms of the $H$-function, and recover equation \eqref{link casson}. 

\begin{proposition}
\label{cassonlink}
Consider $(p_1, p_2)$ surgery along an L--space link $\L=L_1\cup L_2$ of linking number zero when $p_1, p_2=\pm1$. Then
\[
	\lambda(S^3_{p_1, p_2} (\L)) = p_1p_2\sum_{\bm{s}\in \H(\L)} h'(\bm{s})+ p_1\sum_{s_{1}\in \Z} h_{1}(s_{1})+p_2\sum_{s_{2}\in \Z} h_{2}(s_{2}) .
\]
In particular, 
 \[ 
 \lambda(S^3_{p_1, p_2}) = -p_1p_2\beta(\L)+ p_1a_2(L_{1})+ p_2a_2(L_{2}).
 \]
\end{proposition}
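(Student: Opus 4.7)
The plan is to apply Hoste's state-sum formula for the Casson invariant \eqref{statesum} together with the $h$-function expressions for the Conway coefficients $a_2$ that were established earlier in the paper. Specializing \eqref{statesum} to a two-component link $\L \subset S^{3}$ with integer framings $p_i = 1/q_i = \pm 1$ (so $q_i = p_i$), and using $\lambda(S^3) = 0$ together with the vanishing of the empty-sublink contribution, gives
\[
\lambda(S^{3}_{p_1,p_2}(\L)) \;=\; p_1\, a_2(L_1) \;+\; p_2\, a_2(L_2) \;+\; p_1 p_2\, a_2(\L).
\]

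To rewrite this in terms of the $h$-function, I substitute two identifications. From Remark \ref{knot coeff}, each knot sublink satisfies $a_2(L_i) = \sum_{s_i} h_i(s_i)$. For the full link, combining $a_2(\L) = -\beta(\L)$ (which follows from the normalization of the two-variable Alexander polynomial reviewed at the start of Section \ref{sec:satolevine}) with the identity $\beta(\L) = -\sum_{\bm{s}} h'(\bm{s})$ established in \eqref{eqn:SLh} yields $a_2(\L) = \sum_{\bm{s}} h'(\bm{s})$. Plugging these into the displayed equation above produces the first formula of the proposition, and the ``In particular'' version is then immediate by undoing the substitutions, restoring $\beta$ and $a_2$ in place of the corresponding $h$-function sums.

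The main care point is sign-bookkeeping: I must verify that Hoste's $1/q_i$ convention specializes correctly to integer $\pm 1$ surgeries (so that $q_1 q_2$ equals $p_1 p_2$ with the right sign) and that the relation $a_2(\L) = -\beta(\L)$ is consistent with the normalization used for $\widetilde{\Delta}_\L$ throughout the paper. As an alternative more in the spirit of Lemma \ref{casson knot}, one could compute the truncated Euler characteristic of the perturbed surgery complex $(\widetilde{\C_Q}, D)$ cell-by-cell and compare with the two-component unlink (for which $\lambda = 0$, since $S^{3}_{\pm 1, \pm 1}(O) = S^3$); each cell $\sq$ would contribute $(-1)^{\dim \sq}(\deg_O(\sq) - \deg_\L(\sq))/2$, which by Proposition \ref{gradingchange} evaluates to $h(\bm{s})$, $-h_1(s_1)$, $-h_2(s_2)$, or $0$ depending on cell type, and then summing over the truncated CW complex $\CW(\p, (0,0), \bm{b})$ while exploiting the decomposition $h(\bm{s}) = h'(\bm{s}) + h_1(s_1) + h_2(s_2)$ reproduces the same answer. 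This alternative route gives an independent Heegaard Floer derivation but requires the combinatorial bookkeeping of cells in each of the four sign cases, which is where the main technical work would lie.
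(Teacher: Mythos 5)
Your proof is correct, but it runs in the opposite direction from the paper's. You take Hoste's state-sum formula \eqref{statesum} as the engine: specializing to two components with $q_i=p_i=\pm1$ gives $\lambda(S^3_{p_1,p_2}(\L))=p_1a_2(L_1)+p_2a_2(L_2)+p_1p_2a_2(\L)$, and then the identifications $a_2(L_i)=\sum_{s_i}h_i(s_i)$ (Remark \ref{knot coeff}) and $a_2(\L)=-\beta(\L)=\sum_{\bm{s}}h'(\bm{s})$ (equation \eqref{eqn:SLh}) turn this into the stated $h$-function formula by pure substitution. All the ingredients are established earlier in the paper for L--space links (note that Remark \ref{knot coeff} is only valid for L--space knots, which is fine here since the components of an L--space link are L--space knots by Theorem \ref{l-space link cond}), so the argument is sound; your sign worries are resolved by the paper's own conventions ($a_0=-lk$, $\tilde\nabla_\L(0)=a_2=-\beta$). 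The paper instead proves the $h$-function formula \emph{first}, by computing the truncated Euler characteristic of the perturbed surgery complex square-by-square (the local Euler characteristic of each full square is $-h'(\bm{s})$, the partially erased boundary circles contribute $H_1(s_1)$ and $H_2(s_2)$, and one compares against the unlink), and only afterwards deduces the $a_2$/$\beta$ version --- explicitly framing the agreement with \eqref{link casson} as something that is \emph{recovered}, i.e.\ an independent Heegaard Floer derivation and consistency check rather than a consequence of Hoste's theorem. So your primary route buys brevity at the cost of making the proposition an essentially tautological repackaging of Section \ref{sec:satolevine} plus an external input, while the paper's route is intrinsic to the surgery complex (and is what generalizes, e.g.\ to the torsion-corrected statement of Lemma \ref{casson knot} and the corollary about $\sum_s\chi(\mathfrak{A}^0_s)_{tor}$ that follows the proposition). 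Your sketched alternative is in fact the paper's proof: your cell-by-cell contribution $(-1)^{\dim\sq}(\deg_O(\sq)-\deg_\L(\sq))/2$ matches the paper's computation via Proposition \ref{gradingchange}, and the ``combinatorial bookkeeping'' you defer --- the count of erased boundary cells in each sign case and the resulting parity shifts --- is exactly where the paper spends its effort.
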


\begin{proof}
Assume first that $p_1,p_2>0$.
Consider the truncated complex $(\C_Q(\mathcal{H}^\L, \Lambda), D)$. For each complete circle contained in the square $Q$, we calculate the local Euler characteristic as follows.
\begin{lemma}
For a 2-component L--space link $\L=L_{1}\cup L_{2}$ with vanishing linking number, and 
 $\bm{s}\in \Z^2$, the Euler characteristic of the chain complex 
\[
\mathfrak{D}_{\bm{s}}= 
\xymatrix{
\A^{10}_{\bm{s}} \ar[d]_{\Phi_{\bm{s}}^{L_2}}& \A^{00}_{\bm{s}} \ar[l]^{\Phi_{\bm{s}}^{L_1}}  \ar[d]^{\Phi_{\bm{s}}^{L_2}} \\
\A^{11}_{\bm{s}} & \A^{01}_{\bm{s}} \ar[l]_{\Phi_{\bm{s}}^{L_1}} 
}
\]
equals 
$$
-h'(\bm{s}) =-H(\bm{s})+H_{1}(s_{1})+H_{2}(s_{2})  
$$   
\end{lemma}
\begin{proof}
We can explicitly calculate the Euler characteristic of $\mathfrak{D}_{gr>-2N-1}$, where all chains have been truncated below some grading $-2N-1$ for $N>>0$. By applying \eqref{trunc} and Proposition \ref{gradingchange} we have the following Euler characteristics with respect to internal degree:
\begin{eqnarray*}
	\chi(\A^{00}_{\bm{s}})_{>-2N-1} &=& N +1- H(\bm{s}) +\frac{1}{2} \deg z_{1,2}(\bm{s})\\  
	\chi(\A^{01}_{\bm{s}})_{>-2N-1} &=& N+1 - H_1(s_1) +\frac{1}{2} \deg z_{1,2}(\bm{s})\\ 
	\chi(\A^{10}_{\bm{s}})_{>-2N-1} &=& N+1 - H_2(s_2) +\frac{1}{2} \deg z_{1,2}(\bm{s})\\ 
	\chi(\A^{11}_{\bm{s}})_{>-2N-1} &=& N+1 +\frac{1}{2} \deg z_{1,2}(\bm{s}).
\end{eqnarray*}

By noting the cube grading of $0, 1$, or $2$, we have that $\A^{00}_{\bm{s}}, \A^{11}_{\bm{s}}$ are supported in the even parity, and $\A^{10}_{\bm{s}},\A^{01}_{\bm{s}}$ are supported in the odd parity. Finally, notice that $\chi(\mathfrak{D}_{\bm{s}})$ agrees with the Euler characteristic of the truncated square, which equals 
\[
- H(\bm{s})+ H_1(s_1) + H_2(s_2). \qedhere
\] 
\end{proof}

Similarly, the Euler characteristics of the chain complexes
\[
	\A^{01}_{\bm{s}} \stackrel{\Phi_{\bm{s}}^{L_1}}{\longrightarrow} \A^{11}_{\bm{s}} \text{ and } 
	\A^{10}_{\bm{s}} \stackrel{\Phi_{\bm{s}}^{L_2}}{\longrightarrow} \A^{11}_{\bm{s}}
\]
are equal to $H_1(s_1)$ and $H_2(s_2)$, respectively.

Consider $Y=S^{3}_{p_{1}, p_{2}}(\L)$. If $p_{1}=p_{2}=1$, then we can choose an appropriate truncation $b>0$ such that $h'(\bm{s})=0$ for all $\bm{s}\notin Q$ and $h'(\pm b, \pm b)=0$. The truncated surgery complex $\C_Q$ contains all circles in the square $Q$ except the crosses as shown in Figure \ref{positive}. The chain complex consisting of the crosses inside one circle has Euler characteristic $H_{2}(s_{2})$ or $H_{1}(s_{1})$ depending on whether the circle lies on the vertical boundary or the horizontal boundary of $Q$. Thus the Euler characteristic is
\begin{eqnarray}
\label{lots of terms}
\begin{aligned}
\chi(\C_Q)_{>-2N-1} &= -\sum_{\bm{s}\in Q}h'(\bm{s})-\sum_{-b\le s_{1}\le b} H_{1}(s_{1}) -\sum_{-b\le s_{2}\le b} H_{2}(s_{2}) \\
 & + \chi(\mathfrak{A}^{11}_{(-b, -b)})_{>-2N-1} ,
\end{aligned}
\end{eqnarray}
where the last term handles the circles at the corners of the truncated complex. 
As in the knot case, we apply the relation \eqref{renorm-trunc} between the Casson invariant and renormalized Euler characteristic (which causes a sign change). We then subtract from \eqref{lots of terms} the corresponding formula for the unlink to obtain
\begin{eqnarray*}
\lambda(Y)- \lambda(S^{3}_{1, 1}(O)) &=& \sum_{\bm{s}\in \Z^2} h'(\bm{s})+\sum_{s_{1}\in \Z} h_{1}(s_{1})+\sum_{s_{2}\in \Z} h_{2}(s_{2}).
\end{eqnarray*}

From \eqref{eqn:SLh}  we get 
\[
a_2(\L)= - \beta(\L)=\sum_{\bm{s}\in \Z^2} ( H(\bm{s})-H_{1}(s_{1})-H_{2}(s_{2})).
\]
By Remark \ref{knot coeff},
\[
a_2(L_{i})= \sum_{s_{i}\in \Z}( H_{i}(s_{i})-H_{O}(s_{i}))
\]
for $i=1, 2$ where $H_{O}(s_{i})$ denotes the $H$-function for the unknot. 
Thus 
\[
\lambda(Y)= -\beta(\L)+ a_2(L_{1})+ a_2(L_{2}).
\]
This recovers  \eqref{link casson} for $p_{1}=p_{2}=1$. The argument is similar in the case where $p_1=p_2=-1$ or $p_1p_2=-1$, modulo possible parity shifts. When $p_{1} p_{2}>0$, the homology
of the cone is supported in cube degree two or zero, and when $p_1 p_2=-1$, the homology is supported in cube degree one (corresponding with the three cases of Theorem \ref{square erase}). 
Also, for negative surgery coefficients the erased part of the  boundary of $Q$ would appear with the opposite coefficient.
In general, for $p_1, p_2 = \pm 1$ we recover
\[
\lambda(Y)= - p_1p_2\beta(\L)+ p_1a_2(L_{1})+ p_2a_2(L_{2}).\qedhere
\]
\end{proof}

\begin{corollary}
Let $L=L_1\cup L_2$ be an $L$-space link with vanishing linking number and unknotted components, and let $L'_{2}$ be the knot obtained from $L_2$ after blowing down the $+1$-framed knot $L_1$. Then for the torsion part $\mathfrak{A}_{s}^{0}$ corresponding to the knot $L'_2$, we have 
\[
	\sum_{s\in \Z}\chi(\mathfrak{A}^{0}_{s})_{tor}=-\sum_{\{(s_1, s_2)\in \Z^{2}| s_1\neq 0\}} h_{\L}(s_1, s_2).
\]
\end{corollary}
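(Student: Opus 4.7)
The plan is to identify the two Casson-invariant computations, for $+1$ surgery on the knot $L'_2$ and for $(+1,+1)$ surgery on the link $\L$, and extract the desired formula by rearranging.

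First I would note that a Rolfsen twist along the $+1$-framed unknot $L_1$ produces a homeomorphism
\[
S^3_{+1,+1}(\L)\;\cong\;S^3_{+1}(L'_2),
\]
so the two surgeries share the same Casson invariant. Next I would apply Proposition \ref{cassonlink} with $p_1=p_2=+1$. Because both $L_1$ and $L_2$ are unknots we have $H_i=H_O$, so $h_i\equiv 0$ for $i=1,2$, and consequently $h'(\bm{s})=h_{\L}(\bm{s})$. Hence
\[
\lambda(S^3_{+1,+1}(\L))=\sum_{\bm{s}\in\Z^2}h_{\L}(\bm{s}).
\]

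Second, I would apply Lemma \ref{casson knot} to the knot $L'_2$ at $+1$-surgery:
\[
\lambda(S^3_{+1}(L'_2))=\sum_{s\in\Z}h_{L'_2}(s)-\sum_{s\in\Z}\chi(\mathfrak{A}^0_s)_{tor}.
\]
By Theorem \ref{H for slamdunk}, $H_{L'_2}(s)=H_{\L}(0,s)$, and since $L_2$ is unknotted the values of $H_O$ at $(0,s)$ agree with those of the one-component unknot, so
\[
h_{L'_2}(s)=H_{L'_2}(s)-H_O(s)=H_{\L}(0,s)-H_O(s)=h_{\L}(0,s).
\]

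Finally, I would equate the two expressions for the common Casson invariant and solve for the torsion sum:
\[
\sum_{s\in\Z}\chi(\mathfrak{A}^0_s)_{tor}=\sum_{s\in\Z}h_{\L}(0,s)-\sum_{(s_1,s_2)\in\Z^2}h_{\L}(s_1,s_2)=-\sum_{\{(s_1,s_2)\in\Z^2\mid s_1\neq 0\}}h_{\L}(s_1,s_2),
\]
which is the claimed identity. There is no real obstacle here; the only subtle point is ensuring that the normalization conventions of Lemma \ref{casson knot} and Proposition \ref{cassonlink} agree under the Rolfsen twist identification of the two surgery manifolds, which is why the hypothesis that \emph{both} components of $\L$ are unknots (so that $h_1$ and $h_2$ vanish, and $H_O$ is literally the $H$-function of the unknot appearing in the definition of $h_{L'_2}$) is being used in an essential way.
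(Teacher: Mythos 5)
Your proposal is correct and follows essentially the same route as the paper: equate $\lambda(S^3_{+1,+1}(\L))$ computed via Proposition \ref{cassonlink} (where $h_1=h_2=0$ since both components are unknots) with $\lambda(S^3_{+1}(L'_2))$ computed via Lemma \ref{casson knot}, using $h_{L'_2}(s)=h_{\L}(0,s)$ from Theorem \ref{H for slamdunk} and the Rolfsen-twist identification of the two surgery manifolds. Your added remarks on the normalization conventions are a reasonable elaboration of points the paper leaves implicit, but the argument is the same.
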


\begin{proof}
By Proposition \ref{cassonlink} and Lemma \ref{casson knot}, 
\begin{eqnarray*}
\lambda(S^{3}_{1, 1}(\L))=\sum_{\bm{s}\in \Z^{2}} h_{\L}(\bm{s})=\lambda(S^{3}_{1}(L'_{2}))&=&\sum_{s\in \Z}h_{L'_{2}}(s)-\sum_{s\in \Z}\chi(\mathfrak{A}_{s}^{0})_{tor}\\
	&=& \sum_{s_2\in \Z}h(0, s_2)-\sum_{s\in \Z}\chi(\mathfrak{A}_{s}^{0})_{tor}.
\end{eqnarray*}

Hence,
\[
	\sum_{s\in \Z}\chi(\mathfrak{A}^{0}_{s})_{tor}=-\sum_{\{(s_1, s_2)\in \Z^{2}| s_1\neq 0\}} h_{\L}(s_1, s_2). \qedhere
\]
\end{proof}

\begin{remark}
If there exists a lattice point $(s_1, s_2)$ where $s_1\neq 0$ such that $h_{\L}(s_1, s_2)>0$, then $\sum_{s\in \Z}\chi(\mathfrak{A}^{0}_{s})_{tor}<0$ by Corollary \ref{h nonnegative}. Hence $L'_2$ is not an $L$-space knot. This also follows from Corollary \ref{cor: L2prime L space}.
 
\end{remark}

\begin{example}
Let $\Sigma(2, 3, 5)$ denote the Poincar\'e homology sphere, oriented as the boundary of the four-manifold obtained by plumbing the negative-definite $E8$ graph, i.e. the plumbing along the $E8$ Dynkin diagram with vertex weights all $-2$. In the equality
\[
	\lambda(Y) = \chi(HF^+_{red}(Y)) - \frac{1}{2}d(Y),
\]
we must assume that the Casson invariant $\lambda(Y)$ is normalized so that $\lambda(\Sigma(2, 3, 5)) =-1$ (see \cite[Theorem 1.3]{OS:Absolutely}). Therefore $d(\Sigma(2, 3, 5)) =+2$. 
The Poincar\'e homology sphere $\Sigma(2, 3, 5)$ admits an alternate description as $(-1)$-surgery along the left-handed trefoil knot $T(2, -3)$. By reversing orientation, $-\Sigma(2, 3, 5)$ is $(+1)$-surgery along $T(2, 3)$, with $d(\Sigma(2, 3, 5)) =-2$. Now we may observe that
\[
	\lambda(S^3_{+1}(T(2, 3))) = +1 = h(T(2, 3), 0).
\]
\end{example}

\begin{example}
Consider $(+1, +1)$-surgery along the positively-clasped Whitehead link $\L$. Surgery along one component yields a right-handed trefoil in $S^3$, and then $(+1)$-surgery along the remaining component again produces $-\Sigma(2, 3, 5)$. We observe that
\[
	\lambda(S^3_{+1, +1}(\L)) = +1 = -\beta(\L)+ a_2(L_{1})+ a_2(L_{2}) =-(-1)+0+0  = h(\L, (0,0)).
\]
Similarly, consider $(-1, -1)$-surgery along the Whitehead link. Surgery along the first component now yields a figure eight knot in $S^3$, and $(-1)$-surgery along the figure eight knot produces the (oppositely oriented) Brieskorn sphere $-\Sigma(2, 3, 7)$, for which $\lambda(S^3_{-1, -1}(\L)) = +1$. These two cases correspond with homology supported in cube gradings two and zero, respectively, for which there is no parity change in the Euler characteristic calculation. 

Alternatively, consider $(+1, -1)$ or $(-1, +1)$-surgery along the Whitehead link. This is the (positively oriented) Brieskorn sphere $\Sigma(2, 3, 7)$. It has homology supported in cube grading one, which induces the sign change yielding $\lambda(S^3_{+1, -1}(\L)) = -1$.
\end{example}


\section{Crossing changes}
\label{sec:skein}

We now extend the skein inequality of Peters \cite[Theorem 1.4]{Peters} to the case of links with pairwise linking number zero. 
We continue to omit the unique $\spinc$-structure on an integer homology sphere from the notation.

\begin{lemma}
\label{borbound}
Let $K\subset Y$ be a genus one knot in  an integral homology three-sphere.  Then we have the following inequalities: 
$$d(Y)-2\leq d(Y_{1}(K))\leq d(Y). $$
\end{lemma}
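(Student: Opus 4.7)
My plan is to prove the two inequalities separately, using the two-handle cobordism between $Y$ and $Y_1(K)$ for the upper bound and an integer surgery formula combined with a knot Floer genus bound for the lower bound.

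For the upper bound $d(Y_1(K))\le d(Y)$, I would consider the two-handle cobordism $W\colon Y\to Y_1(K)$ obtained by attaching a $(+1)$-framed two-handle along $K$; its intersection form is the rank-one lattice $(+1)$. Reversing orientation yields a negative-definite cobordism $\overline{W}\colon Y_1(K)\to Y$ with $b_2^-(\overline{W})=1$, and the largest value of $c_1(\mfs)^2$ over characteristic elements of this lattice is $-1$. Applying Proposition~\ref{prop:OSdfacts}(1) then gives
\[
d(Y)-d(Y_1(K))\;\ge\;\frac{c_1(\mfs)^2+b_2^-(\overline{W})}{4}\;=\;0,
\]
which is the desired upper bound.

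For the lower bound $d(Y_1(K))\ge d(Y)-2$, I would invoke the Ni--Wu mapping cone formula \cite{OS:Integer,NiWu} extended to knots in an arbitrary integer homology sphere $Y$: for $(+1)$-surgery it reads
\[
d(Y_1(K))\;=\;d(Y)-2\,H_K(0),
\]
where $H_K$ denotes the non-negative integer-valued $H$-function of the knot $K\subset Y$, defined analogously to Definition~\ref{Hfunction}. Combining this identity with the knot Floer genus bound $H_K(s)\le f_{g(K)}(s)$ (the knot analogue of the bound \eqref{def f} appearing in Theorem~\ref{thm:h-f}), which specializes for a genus-one knot to $H_K(0)\le f_1(0)=1$, and using $H_K(0)\ge 0$, we conclude $d(Y_1(K))\in\{d(Y),\,d(Y)-2\}$, which gives the lower bound.

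The main point to verify is that the Ni--Wu formula and the genus bound both continue to hold for knots in an arbitrary integer homology sphere $Y$ (they are classical only for $Y=S^3$). Both extensions are routine: the mapping cone argument depends only on the knot complement and the framing, with the ambient $d$-invariant entering as an overall shift, and the genus bound is a local statement about the vertical filtration on $CFK^\infty(Y,K)$. If one preferred to avoid invoking these extensions, a self-contained alternative for the lower bound would use the surgery exact triangle
\[
\cdots\to HF^-(Y_0(K))\to HF^-(Y)\to HF^-(Y_1(K))\to HF^-(Y_0(K))[-1]\to\cdots,
\]
together with the adjunction inequality: the Seifert surface caps off to a torus in $Y_0(K)$, forcing $HF^-(Y_0(K))$ to be concentrated in the torsion $\spinc$ structure, and the two-step computation of the exact triangle then bounds $d(Y_1(K))$ from below in terms of $d(Y)$ and $d_{top},d_{bot}$ of the standard 3-manifold $Y_0(K)$.
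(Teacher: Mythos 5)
Your conclusion is right, and your upper-bound argument is essentially the paper's: the paper simply cites \cite[Corollary 9.14]{OS:Absolutely}, which is exactly the negative-definite two-handle computation you carry out from Proposition \ref{prop:OSdfacts}(1). For the lower bound the paper takes a lighter route than yours. Because $K$ has Seifert genus one, $+1=2g(K)-1$ is already a \emph{large} surgery, so $HF^{-}(Y_{1}(K))\cong H_{\ast}(A^{-}_{0}(K))$ directly by the large surgery theorem --- no mapping cone, truncation, or exact triangle is needed. Defining $H_{K}(s)$ as in Definition \ref{Hfunction}, this gives $d(Y_{1}(K))=-2H_{K}(0)$, while $A^{-}_{1}(K)\simeq CF^{-}(Y)$ gives $-2H_{K}(1)=d(Y)$, and the controlled-growth property $H_{K}(0)\leq H_{K}(1)+1$ (as in Lemma \ref{h-function increase}, whose proof is local and valid in any integer homology sphere) finishes the argument. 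Your route instead invokes (i) a Ni--Wu-type $d$-invariant formula for $+1$-surgery on knots in arbitrary integer homology spheres, and (ii) the bound $H_{K}(0)\leq f_{g}(0)=\lceil g/2\rceil$. Point (i) is true in the case at hand, but the honest justification in this generality \emph{is} the large-surgery observation above, so you may as well use it directly rather than appeal to an extension of the full mapping cone formula that is not in the literature. Point (ii) is over-claimed as stated: $V_{0}\leq\lceil g_{4}/2\rceil$ is Rasmussen's four-genus bound, proved by a four-dimensional cobordism argument, and is not ``a local statement about the vertical filtration''; what is local is the weaker monotonicity bound $H_{K}(0)\leq H_{K}(1)+1$, which for $g=1$ yields the same estimate $H_{K}(0)\leq 1$ and is all you need. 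With these two substitutions your proof becomes the paper's, and the sketched exact-triangle alternative can be discarded.
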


\begin{proof}
The part $d(Y_{1}(K))\leq d(Y)$ follows from \cite[Corollary 9.14]{OS:Absolutely}. Now we prove the inequality that $d(Y)-2\leq d(Y_{1}(K))$. Since $K$ is a genus one knot, $+1$-surgery is a large surgery, i.e. $HF^{-}(Y_{1}(K))\cong H_{\ast}(A^{-}_{0}(K))$ \cite{OS:Hol}. This is a direct sum of one copy of $\F[U]$ and some $U$-torsion. Define $H_{K}(s)$ by saying that $-2H_{K}(s)$ is the maximal homological degree of the free part of $H_{\ast}(A^{-}_{s}(K))$ for $s\in \Z$, which is the same as Definition \ref{Hfunction}. Then $d(S^{3}_{1}(K))=-2H_{K}(0)$. Note that $H_{K}(0)\leq H_{K}(1)+1$ (the monotonicity of $H_{k}$ holds in an arbitrarily homology sphere and the proof is similar to the one in Proposition \ref{h-function increase}), and  $-2H_{K}(1)=d(Y)$. So $d(Y_{1}(K))\geq d(Y)-2$. 
\end{proof}

\begin{theorem}
\label{thm:bbnd}
Let $\L=L_1\cup\cdots\cup L_n$ be a link of pairwise linking number zero. Given a diagram of $\L$ with a distinguished crossing $c$ on component $L_i$, let $D_+$ and $D_-$ denote the result of switching $c$ to positive and negative crossings, respectively. Then 
\[
	d(S^{3}_{1, \cdots, 1}(D_-)) - 2 \leq d(S^{3}_{1, \cdots, 1}(D_+)) \leq d(S^{3}_{1, \cdots, 1}(D_-)).
\]
\end{theorem}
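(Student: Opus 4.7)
The strategy is to realize the crossing change at $c$ as $\pm 1$-surgery on a small ``crossing-disk'' unknot in $S^3$, translating the passage from $D_-$ to $D_+$ into $+1$-surgery on a knot $K$ inside the integer homology sphere $Y_- := S^3_{1,\ldots,1}(D_-)$. Since $K$ will turn out to be null-homologous of Seifert genus at most one, the skein inequality will follow directly from Lemma \ref{borbound}. This is the direct analogue of Peters' argument in \cite[Theorem 1.4]{Peters}.

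\textbf{Setup via crossing-disk surgery.} I would place a small unknot $U \subset S^3$ bounding a disk $D$ that meets $L_i$ transversally in exactly two oppositely oriented points near $c$, and is disjoint from $\L \setminus L_i$. Then $\mathrm{lk}(U,L_j)=0$ for every $j$, so the framing matrix of $\L \cup U$ with framings $(1,\ldots,1;+1)$ is the identity, and the surgery manifold is an integer homology sphere. By a standard Kirby move, one of $\pm 1$-surgery on $U$ (with the disk framing) converts $D_-$ into $D_+$; without loss of generality I take this to be $+1$-surgery, replacing $D$ by its mirror if necessary. Performing the $U$-surgery first recovers $S^3$ with $\L$ replaced by $D_+$ and then $(1,\ldots,1)$-surgery yields $Y_+ := S^3_{1,\ldots,1}(D_+)$; performing the $\L$-surgery first produces $Y_-$ containing the image knot $K$ of $U$, and a subsequent $+1$-surgery on $K$ therefore also gives $Y_+$.

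\textbf{Geometric properties of $K$.} Since $\mathrm{lk}(U,L_j)=0$, the knot $K$ is null-homologous in $Y_-$. To bound its Seifert genus, I would delete from $D$ two small open disks about the intersection points with $L_i$, leaving a pair of pants $P$ with $\partial P = U \sqcup \mu_i^+ \sqcup \mu_i^-$, where $\mu_i^\pm$ are oppositely oriented meridians of $L_i$. In the glued-in surgery solid torus for $L_i$, the pair $\mu_i^+ \sqcup \mu_i^-$ cobounds an annulus (obtained by taking the annular region between them on $\partial N(L_i)$ and completing it across the solid torus); capping $P$ with this annulus gives a genus-one Seifert surface $\Sigma$ for $K$ in $Y_-$. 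Because $\Sigma$ agrees with $D$ near $U$, the Seifert framing of $K$ coincides with the $0$-framing of $U$ coming from $D$, so the $+1$-surgery above is indeed with respect to the Seifert framing.

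\textbf{Conclusion.} Applying Lemma \ref{borbound} to the knot $K$ in the integer homology sphere $Y_-$ yields
\[
d(Y_-) - 2 \;\le\; d\bigl((Y_-)_{+1}(K)\bigr) \;\le\; d(Y_-),
\]
and since $(Y_-)_{+1}(K) = Y_+$, this is exactly the claimed inequality. The main obstacle is really bookkeeping: fixing sign conventions so that $+1$-surgery on $U$ (rather than $-1$) realizes the chosen direction of the crossing change, and verifying that the capping tube does not alter the framing near $K$. A minor point is the degenerate case in which $K$ is unknotted in $Y_-$: then $Y_+=Y_-$ and the inequality is trivial; this is consistent with the fact that the proof of Lemma \ref{borbound} goes through for any genus $\le 1$.
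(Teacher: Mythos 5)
Your argument is correct and is essentially identical to the paper's proof: both realize the passage from $D_-$ to $D_+$ as $+1$-surgery on the crossing-circle unknot inside the integer homology sphere $Y_-=S^{3}_{1,\cdots,1}(D_-)$, observe that this knot bounds a genus-one surface there (the paper tubes the crossing disk along $L_i$ in $S^{3}\setminus \L$; you cap a pair of pants with an annulus through the surgery solid torus --- the same surface), and then apply Lemma \ref{borbound}. The one soft spot is your ``without loss of generality'' on the sign: the surgery coefficient on the crossing circle that converts $D_-$ into $D_+$ is forced to be $+1$ (as in Figure \ref{crossingchange}), not a choice, and the opposite sign would reverse the inequality, so this should be checked rather than assumed --- but with the correct sign your conclusion stands.
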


\begin{proof}
Consider the distinguished crossing $c$ along component $L_i$. 
Let $L_{n+1}$ denote the boundary of a crossing disk, i.e. a small disk at $c$ that intersects $L_i$ geometrically twice and algebraically zero times, as in Figure \ref{crossingchange}. 
The crossing change taking $D_-$ to $D_+$ is accomplished by performing $(+1)$-framed surgery along $L_{n+1}\subset S^{3}_{1, \cdots, 1}(D_{-})$. Let $Y=S^{3}_{1, \cdots, 1}(D_{-})$. It is an integral homology sphere, and $S^{3}_{1, \cdots, 1}(D_{+})=Y_{1}(L_{n+1})$. We claim that the Seifert genus of $L_{n+1}$ in $Y$ is at most 1. One can easily create a genus one surface bounded by $L_{n+1}$ in $Y$, simply by adding a tube in $S^{3}\setminus \L$ along the $L_{i}$ to the crossing disk bounded by $L_{n+1}$ at crossing $c$. Then the inequalities follows from Lemma \ref{borbound}. 
\end{proof}

\begin{figure}[H]
    \centering
    \begin{tikzpicture}
    \node[anchor=south west,inner sep=0] at (0,0) {\includegraphics[width=0.6\textwidth]{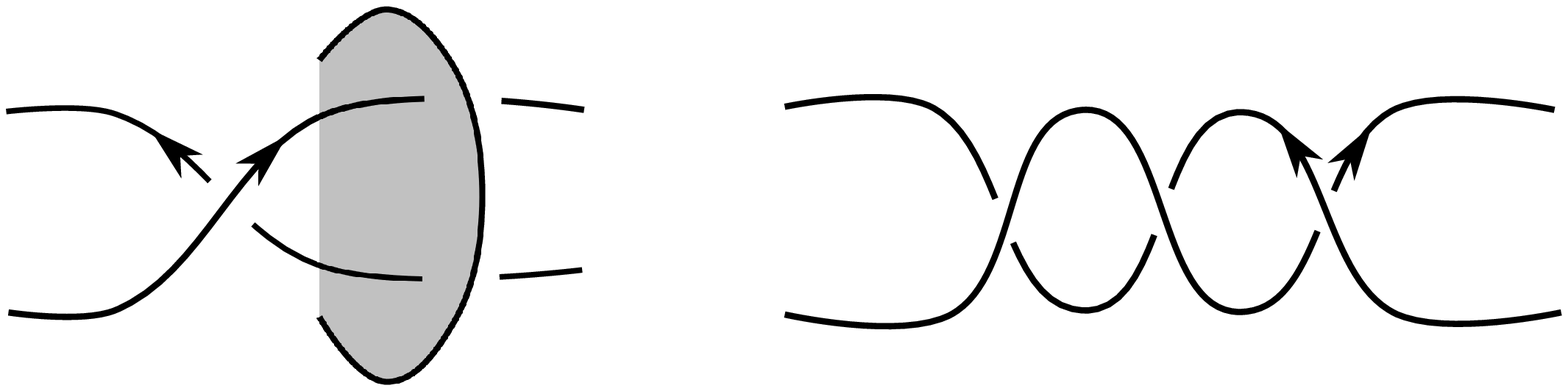}};
    \node[label=above right:{$-1$}] at (2.4, -0.3){};
    \end{tikzpicture}
	\caption{A crossing change taking $D_+$ to $D_-$.}
	\label{crossingchange}
\end{figure}

\section{Genus bounds}
\label{sec:genusbounds}

\subsection{Inequalities}
\label{subsec:genusboundsformulas}

Now we may generalize Peters' and Rasmussen's $4$-ball genus bounds to links with vanishing linking numbers \cite{Peters, Ras}.

Recall that the $n$ components of the link $\L=L_{1}\cup \cdots \cup L_{n}$ bound $n$ mutually disjoint, smoothly embedded surfaces in the $4$-ball if and only if each pairwise linking number is zero. In this case, we define the $4$-genus of $\L$ as:
\[
	g_{4}(\L)=\min \left\{ \sum_{i=1}^{n} g_{i}\mid g_{i}=g(\Sigma_{i}), \Sigma_{1}\sqcup \cdots \sqcup \Sigma_{n}\hookrightarrow B^{4}, \partial \Sigma_{i}=L_{i} \right\},
\]
where the component $L_i$ bounds a surface $\Sigma_i$ with smooth $4$-genus $g_i$.

Let $B_{p_i}$ denote a circle bundle over a closed oriented genus $g_i$ surface with Euler characteristic $p_i$. We have that $H^{2}(B_{p_i})\cong \Z^{2g_i}\oplus \Z_{p_i}$ (see for example \cite[Proposition 3.1]{Liu} for a homology calculation). In \cite{Liu}, the second author constructed a $\spinc$-cobordism from $(\Co, \mft')$ to $(\Sp, \mft)$. Following our conventions for the parameterization of $\spinc$-structures (section \ref{subsec:spinc}), the labelling of the torsion Spin$^{c}$-structures $\mft_{i}$ on $B_{p_i}$ is such that $-|p_i|/2\leq t_i \leq |p_i|/2$,  corresponding to the torsion part of $H^{2}(B_{p_{i}})$. 

We are ready to prove Proposition   \ref{prop:genusbound}. We restate it here for the reader's convenience.

\begin{proposition}
Let $\L\subset S^{3}$ denote an $n$-component link with pairwise vanishing linking numbers. Assume that $p_{i}>0$ for all $1\leq i \leq n$. Then 
\begin{equation}
\label{first}
 d(S^{3}_{-p_1, \cdots, -p_n}(\L), \mft)\leq \sum_{i=1}^{n} d(L(-p_i, 1), t_i) +2f_{g_{i}}(t_i)
 \end{equation}
 and
 \begin{equation}
 \label{second}
-d(S^{3}_{p_1, \cdots, p_n}(\L), \mft)\leq \sum_{i=1}^{n} d(L(-p_i, 1), t_i) +2f_{g_{i}}(t_i).
 \end{equation}
\end{proposition}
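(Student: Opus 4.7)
The plan is to adapt the second author's $\spinc$-cobordism construction from \cite{Liu}. Given disjoint smoothly embedded surfaces $\Sigma_1, \ldots, \Sigma_n \subset B^4$ of genera $g_1, \ldots, g_n$ with $\partial \Sigma_i = L_i$, I would build a negative semi-definite $\spinc$-cobordism $W$ from the connected sum of circle bundles $Z := \#_{i=1}^{n} B_{-p_i}$ to $S^{3}_{-p_1, \ldots, -p_n}(\L)$. The construction attaches a $(-p_i)$-framed $2$-handle to $B^4$ along each $L_i$ and tubes the surface $\Sigma_i$ with the core of the corresponding $2$-handle; since the pairwise linking numbers vanish, each resulting closed surface $\widehat{\Sigma}_i$ has genus $g_i$ and self-intersection $-p_i$. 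Excising a disk-bundle neighborhood of each $\widehat{\Sigma}_i$ and joining the $n$ new boundary components with $(n-1)$ one-handles produces $W$, which is negative semi-definite because the original intersection form is $\mathrm{diag}(-p_1, \ldots, -p_n)$ and the excision kills $H_2$ rationally.

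For a $\spinc$-structure $\mft = (t_1, \ldots, t_n)$ on $S^{3}_{-p_1, \ldots, -p_n}(\L)$ with $|t_i| \le p_i/2$, I would extend $\mft$ to a $\spinc$-structure $\mfs$ on $W$ whose restriction to each $B_{-p_i}$ summand of $Z$ is the torsion $\spinc$-structure labelled by $t_i$, chosen so that $c_1(\mfs)^2 = 0$ after the excision. Combining Proposition \ref{prop:OSdfacts}(1) with the standard-manifold bound of Proposition \ref{prop:standard-inequality} applied to $W$ then yields
\[
d(S^{3}_{-p_1, \ldots, -p_n}(\L), \mft) \;\leq\; d_{bot}(Z, \mft') + \tfrac{1}{2} b_1(Z),
\]
where $\mft' = (t_1, \ldots, t_n)$ is the induced $\spinc$-structure on $Z$. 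Using the connect-sum formula for $d_{bot}$, the identity \eqref{d circle bundle via f}, and $b_1(Z) = \sum_i 2g_i$, the right-hand side simplifies as
\[
\sum_{i=1}^{n}\!\bigl( d(L(-p_i, 1), t_i) + 2 f_{g_i}(t_i) - g_i \bigr) + \sum_{i=1}^{n} g_i \;=\; \sum_{i=1}^{n}\!\bigl( d(L(-p_i, 1), t_i) + 2 f_{g_i}(t_i) \bigr),
\]
proving \eqref{first}.

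For \eqref{second}, I would apply \eqref{first} to the mirror link $\L^{*}$, whose components bound surfaces of the same genera $g_i$ in $B^4$. The orientation-reversing homeomorphism $-S^{3}_{p_1, \ldots, p_n}(\L) \cong S^{3}_{-p_1, \ldots, -p_n}(\L^{*})$, combined with the $\spinc$-conjugation symmetries $d(L(-p_i,1), t_i) = d(L(-p_i, 1), -t_i)$ and $f_{g_i}(t_i) = f_{g_i}(-t_i)$, then converts \eqref{first} applied to $\L^{*}$ directly into \eqref{second} for $\L$.

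The main obstacle will be carrying out the $\spinc$-structure extension precisely enough to ensure $c_1(\mfs)^2 = 0$ on $W$ and to match the parametrization of torsion $\spinc$-structures on $Z$ with the parametrization $(t_1, \ldots, t_n)$ on the surgery manifold. The topological construction of $W$ and the associated cohomology and $\spinc$ bookkeeping are essentially those of \cite{Liu}; the new contribution here is feeding this setup through the refined computation of $d_{bot}(B_{-p_i})$ from Theorem \ref{d-invaraint of circle bundle} rather than through a cruder genus estimate, which is exactly what produces the function $f_{g_i}$ in the bound.
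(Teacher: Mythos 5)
Your proposal follows essentially the same route as the paper: the paper simply cites \cite[Proposition 3.8]{Liu} for the inequality $d(S^{3}_{-p_1,\dots,-p_n}(\L),\mft)\leq \sum_i d_{bot}(B_{-p_i},t_i)+\sum_i g_i$ (whose cobordism construction you re-sketch), then substitutes the circle-bundle formula \eqref{d circle bundle via f} and handles \eqref{second} by mirroring, exactly as you do. The only difference is presentational — you unpack the black-boxed cobordism argument, with a few details (the choice of $\mfs$ with $c_1(\mfs)^2=0$ and the semi-definiteness check) left slightly informal, but the logical skeleton and the final computation coincide with the paper's.
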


\begin{proof}

By \cite[Proposition 3.8]{Liu} we get the inequality
\begin{equation}
\label{dbound}
	d(S^{3}_{-p_{1}, \cdots, -p_{n}}(\L), \mft)\leq \sum\limits_{i=1}^{n} d_{bot}(B_{-p_{i}}, t_{i})+g_{1}+\cdots +g_{n}.
\end{equation}
By \eqref{d circle bundle via f} we can rewrite the right hand side as
\[
\sum\limits_{i=1}^{n} d_{bot}(B_{-p_{i}}, t_{i})+g_{1}+\cdots +g_{n}=\sum_{i=1}^{n}(-\phi(p_i,t_i)+2f_{g_i}(t_i)).
\] 
This proves the first inequality \eqref{first}. If $\L^{\ast}$ is the mirror of $\L$, then
\[
	d(S^{3}_{\bm{p}}(\L), \mft)=-d(S^{3}_{-\bm{p}}(\L^{\ast}), \mft).
\]
Since mirroring preserves the 4-genera of knots, the right hand side of \eqref{dbound} does not change if we replace $d(S^{3}_{\bm{p}}(\L), \mft)$ by $-d(S^{3}_{-\bm{p}}(\L^{\ast}), \mft)$.
This proves the second inequality \eqref{second}.
\end{proof}

Proposition \ref{prop:genusbound} gives lower bounds on the 4-genera of $\L$ in terms of the 3-manifolds $S^{3}_{\pm \p}(\L)$ where $\p\succ \bm{0}$. Theorem \ref{thm:generalizedniwu} allows us to compute the $d$-invariants of $S^{3}_{\pm \p}(\L)$ for two-component L--space links. Combining these two observations, we obtain the following bounds for the 4-genera of two-component L--space links with vanishing linking number. 

\begin{theorem}

Let $\L=L_{1}\cup L_{2}$ denote a two-component L--space link with vanishing linking number. Then  for all $p_1>0$ and $p_2>0$
\begin{equation*}
h(s_1, s_2) \leq f_{g_{1}}(t_{1})+f_{g_{2}}(t_{2}), 
\end{equation*}
where $(s_1,s_2)\in \Z^2$ corresponds to the $\spinc$-structure $\mft=(t_1,t_2) $ on $S^{3}_{p_{1}, p_{2}}(\L)$.
\end{theorem}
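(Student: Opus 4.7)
The plan is to combine the genus bound \eqref{second} of Proposition \ref{prop:genusbound} with the explicit formula for the $d$-invariant from Theorem \ref{thm:generalizedniwu}(b), and then upgrade the resulting inequality (which a priori only controls $h$ at the four lattice points $s_{\pm\pm}$) to a bound on $h$ at every lattice point in the $\spinc$-structure by invoking the monotonicity of $h$.

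First I would apply \eqref{second} to $\L$: for $p_1,p_2>0$ and $\mft=(t_1,t_2)$ with $-p_i/2\le t_i\le p_i/2$,
\[
-d(S^{3}_{p_1,p_2}(\L),\mft)\le \sum_{i=1}^{2} d(L(-p_i,1),t_i)+2f_{g_i}(t_i).
\]
Using the mirror identity $d(L(-p,1),t)=-d(L(p,1),t)=-\phi(p,t)$, the right hand side becomes $-\phi(p_1,t_1)-\phi(p_2,t_2)+2f_{g_1}(t_1)+2f_{g_2}(t_2)$. On the other hand, Theorem \ref{thm:generalizedniwu}(b) evaluates
\[
d(S^{3}_{p_1,p_2}(\L),\mft)=\phi(p_1,t_1)+\phi(p_2,t_2)-2\max_{\pm\pm}h(s_{\pm\pm}(t_1,t_2)).
\]
Substituting and cancelling the lens space terms yields
\[
\max_{\pm\pm} h(s_{\pm\pm}(t_1,t_2))\le f_{g_1}(t_1)+f_{g_2}(t_2).
\]

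Next I would show that for any lattice point $(s_1,s_2)$ in the $\spinc$-structure $\mft$ one has $h(s_1,s_2)\le \max_{\pm\pm} h(s_{\pm\pm}(t_1,t_2))$. By Lemma \ref{lem: h increases}, the function $s_1\mapsto h(s_1,s_2)$ is non-increasing as $s_1$ moves away from $0$ in the positive direction (once $s_1>0$) and non-decreasing as $s_1$ approaches $0$ from below. Restricting to the arithmetic progression $s_1\equiv t_1\pmod{p_1}$, the maximum is therefore achieved at one of the two values closest to the origin, namely $s_1=s_+^{(1)}$ or $s_1=s_-^{(1)}$. Applying the same argument in the $s_2$ direction, we conclude that $h(s_1,s_2)$ is bounded above by $h$ evaluated at one of the four corner points $s_{\pm\pm}(t_1,t_2)$.

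Combining the two displayed inequalities gives
\[
h(s_1,s_2)\le \max_{\pm\pm} h(s_{\pm\pm}(t_1,t_2))\le f_{g_1}(t_1)+f_{g_2}(t_2),
\]
which is exactly the claim. There is no real obstacle here: the heart of the argument is already encapsulated in Proposition \ref{prop:genusbound} and Theorem \ref{thm:generalizedniwu}; the mildly delicate step is the monotonicity reduction from a general lattice point to the corner points, which is handled by Lemma \ref{lem: h increases}. One should also be careful to verify the mirror relation $d(L(-p,1),t)=-d(L(p,1),t)$ using the normalization convention fixed in Section \ref{subsec:standard}.
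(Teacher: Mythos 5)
Your proposal is correct and follows essentially the same route as the paper: combine inequality \eqref{second} of Proposition \ref{prop:genusbound} with the formula of Theorem \ref{thm:generalizedniwu}(b), cancel the lens space terms, and reduce from an arbitrary lattice point to the four corner points via Lemma \ref{lem: h increases}. The only difference is that you spell out the monotonicity reduction in more detail than the paper, which simply cites the lemma.
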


\begin{proof}
By Theorem \ref{thm:generalizedniwu} we have 
$$
-d(S^{3}_{p_1, p_2}(\L), \mft)=-\sum_{i=1}^{2} \phi(p_i,t_i)+2\max \{ h(s_{\pm \pm}(t_1, t_2)).
$$
Combining this with \eqref{second} and dividing by 2, we get
$$\max \{ h(s_{\pm \pm}(t_1, t_2)) \}\leq f_{g_{1}}(t_{1})+f_{g_{2}}(t_{2}).$$
By Lemma \ref{lem: h increases}, $h(s_1, s_2)\le \max  \{ h(s_{\pm \pm}(t_1, t_2)) \}$. Hence
\[
	h(s_1, s_2)\leq f_{g_{1}}(t_{1})+f_{g_{2}}(t_{2}). \qedhere
\]
\end{proof}

\subsection{Examples}
\label{subsec:genusboundsexamples}
There exist some links $\L$ for which the $d$-invariants of the $(\pm 1, \cdots, \pm 1)$-surgery manifolds are known. In this section we provide some examples where existing $d$-invariants calculations can now be applied to determine the 4-genera for several families of links.

\begin{example}
The two bridge link $\L_k=b(4k^{2}+4k, -2k-1)$ is a two-component L--space link with vanishing linking number for any positive integer $k$ \cite{LiuY2}. Theorem \ref{thm:generalizedniwu} implies 
\[ d(S^{3}_{-1, -1}(\L))=0 \]
and
\[ d(S^{3}_{1, 1}(\L))=-2h(0,0)=-2\lceil k/2 \rceil,\]
where the $h$-function of $\L$ can be obtained from the calculation in \cite[Proposition 6.12]{LiuY2}.
When $p_1, p_2$ are sufficiently large positive integers, we obtain that $g_{4}(\L)\geq k$. We may construct two disjoint surfaces bounded by $\L$ such that $g_{4}(\L)=k$. For details, see \cite[Example 4.1]{Liu}. 
\end{example}

Consider the special case of Inequality \eqref{d-gen-ineq}  when $p_1=\cdots=p_n=1$. There is a unique $\spinc$ structure $\mft_0$ on $S^{3}_{\pm 1, \cdots, \pm 1}(\L)$, and we have
\begin{equation}
\label{inequlity 5}
-d(S^{3}_{1, \cdots, 1}(\L),\mft_0)/2 \leq \sum_{i=1}^{n} \lceil g_i/2 \rceil. 
\end{equation}

On the one hand, this inequality can be used to restrict the $d$-invariants of $(\pm 1)$-surgery along a genus one link $\L$ with vanishing pairwise linking numbers. This will be the case in Corollary \ref{genus1knot}. On the other hand, we may bound the 4-genus of a link $\L$ if we know $d(S^{3}_{1, \cdots, 1}(\L))$. This will be the case in Example \ref{2bwhitehead}.

\begin{corollary}
\label{genus1knot}
Let $\L$ denote a genus one link with vanishing pairwise linking numbers. Then $d(S^{3}_{1, \cdots, 1}(\L), \mft_{0})=0$ or $-2$, and $d(S^{3}_{-1, \cdots, -1}(\L), \mft_{0})=0$ or $2$. 
\end{corollary}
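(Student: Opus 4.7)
The plan is to exploit the inequalities of Proposition \ref{prop:genusbound} in both directions, combined with a negative definite cobordism argument, and the fact that the $d$-invariant of an integer homology sphere is an even integer.

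First, since $\L$ has four-genus $1$, the components bound disjoint surfaces $\Sigma_i\subset B^4$ with $\sum g_i=1$, so $\sum_i \lceil g_i/2\rceil = 1$. Applying inequality \eqref{inequlity 5} directly gives
$$-d(S^{3}_{1,\ldots,1}(\L),\mft_0)\leq 2,\qquad \text{i.e.,}\qquad d(S^{3}_{1,\ldots,1}(\L),\mft_0)\geq -2.$$
Applying \eqref{d-gen-ineq} at $p_i=1$, $t_i=0$, and using $d(L(-1,1),0)=0$, gives the dual bound
$$d(S^{3}_{-1,\ldots,-1}(\L),\mft_0)\leq 2.$$

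Next, I would use the trace cobordism $W\colon S^3\to S^{3}_{-1,\ldots,-1}(\L)$ obtained by attaching $-1$-framed $2$-handles along $\L$. Because all pairwise linking numbers of $\L$ vanish, the intersection form of $W$ is $-\mathrm{Id}_n$, so $W$ is negative definite with $b_2(W)=n$. Choose a $\spinc$ structure $\mfs$ on $W$ with $\langle c_1(\mfs),[F_i]\rangle=\pm 1$ for each core disk $F_i$ of a $2$-handle; this is characteristic (since $[F_i]\cdot[F_i]=-1$) and realizes $c_1(\mfs)^2=-n$. Since $S^{3}_{-1,\ldots,-1}(\L)$ is an integer homology sphere, $\mfs|_{S^{3}_{-1,\ldots,-1}(\L)}=\mft_0$ automatically. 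Proposition \ref{prop:OSdfacts}(1) then yields
$$d(S^{3}_{-1,\ldots,-1}(\L),\mft_0)\geq d(S^3)+\frac{c_1(\mfs)^2+b_2(W)}{4}=0.$$
Combined with the upper bound, $d(S^{3}_{-1,\ldots,-1}(\L),\mft_0)\in[0,2]$, and since the $d$-invariant of an integer homology sphere lies in $2\mathbb{Z}$, it must equal $0$ or $2$.

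Finally, mirroring gives the other statement: the mirror link $\L^\ast$ is also a linking-number-zero link of four-genus $1$, and $d(S^{3}_{1,\ldots,1}(\L),\mft_0)=-d(S^{3}_{-1,\ldots,-1}(\L^\ast),\mft_0)$. Applying the previous paragraph to $\L^\ast$ shows that this latter value lies in $\{0,2\}$, so $d(S^{3}_{1,\ldots,1}(\L),\mft_0)\in\{0,-2\}$, as required. The only nontrivial step is identifying a characteristic $\spinc$ structure on $W$ with $c_1^2=-n$; everything else follows by combining the genus bound, mirror symmetry, and parity.
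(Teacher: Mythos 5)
Your proposal is correct and follows essentially the same route as the paper: one side of the bound comes from the four-genus inequality of Proposition \ref{prop:genusbound} (specialized to $p_i=1$), the other from a negative definite surgery cobordism via Proposition \ref{prop:OSdfacts}(1), and the conclusion follows from evenness of the $d$-invariant of an integer homology sphere together with mirroring. The only cosmetic difference is that you treat the $(-1,\dots,-1)$-surgery directly and mirror to get the $(+1,\dots,+1)$ case, whereas the paper does the reverse; your explicit identification of the characteristic $\spinc$ structure with $c_1(\mfs)^2=-n$ just spells out what the paper leaves to the citation of Ozsv\'ath--Szab\'o.
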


\begin{proof}
By inequality \eqref{inequlity 5}, 
\[ d(S^{3}_{1, \cdots, 1}(\L), \mft_0)\geq -2.\]
By observing the negative definite cobordism from $S^3_{1, \cdots, 1}(\L)$ to $S^3$, we have\newline $d(S^{3}_{1, \cdots, 1}(\L), \mft_0)\leq 0$. Note also that $d(S^{3}_{1, \cdots, 1}(\L), \mft_0)$ is even because $S^{3}_{1, \cdots, 1}(\L)$ is an integer homology sphere. Then $d(S^{3}_{1, \cdots, 1}(\L), \mft_0)=0$ or $-2$. 

Let $\L^{\ast}$ denote the mirror link of $\L$. Then 
$d(S^{3}_{-1, \cdots, -1}(\L), \mft_0)=-d(S^{3}_{1, \cdots, 1}(\L^{\ast}), \mft_0)$ equals $0$ or $2$ since $\L^{\ast}$ is also a genus one link. 
\end{proof}

Let $D_{+}(K, n)$ denote the $n$-twisted positively clasped Whitehead double of $K$.  
If $K$ is an unknot, then $D_{+}(K, n)$ is also an unknot. Otherwise, $D_{+}(K, n)$ is a genus one knot. Corollary \ref{genus1knot} tells us that $d(S^{3}_{1}(D_{+}(K, n)))=0$ or $-2$ and $d(S^{3}_{-1}(D_{+}(K, n)))=0$ or $2$. Indeed, using Hedden's calculation of $\tau(K)$ for Whitehead doubles \cite{Hedden}, Tange calcuated $HF^{+}(S^{3}_{\pm 1}(D_{+}(K, n)))$ for any knot $K$, yielding:
\begin{proposition}\cite{Motoo}
Let $K$ be a knot in $S^{3}$. Then 
\[
d(S^{3}_{1}(D_{+}(K, n)), \mft_0) = \left\{
        \begin{array}{ll}
            0  & \quad n\geq 2\tau(K) \\
            -2 & \quad n<2\tau(K)
        \end{array}
    \right. 
\]
and 
\[ d(S^{3}_{-1}(D_{+}(K, n)), \mft_0)=0. \]
\end{proposition}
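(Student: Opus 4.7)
The plan is to combine Corollary \ref{genus1knot} with Hedden's computation of $\tau(D_+(K,n))$ and the Ni-Wu identity $d(S^3_1(K'), \mft_0) = -2V_0(K')$, where $V_0(K') = H_{K'}(0)$ is the $0$-th value of the $H$-function. When $K$ is nontrivial (the unknot case being trivial), $D_+(K,n)$ has Seifert genus one, so Corollary \ref{genus1knot} constrains $d(S^3_1(D_+(K,n)),\mft_0) \in \{0,-2\}$, which translates to $V_0(D_+(K,n)) \in \{0,1\}$; the task reduces to deciding which value occurs in each regime.

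For $n < 2\tau(K)$, Hedden's theorem gives $\tau(D_+(K,n)) = 1$. The standard inequalities $\tau \leq \nu^+$ and $\nu^+ \leq g = 1$ force $\nu^+(D_+(K,n)) = 1$, and by the definition of $\nu^+$ this forces $V_0 = H_{D_+(K,n)}(0) \geq 1$. Combined with the genus-one upper bound $V_0 \leq 1$, one concludes $V_0 = 1$ and hence $d(S^3_1) = -2$. For $n \geq 2\tau(K)$, Hedden gives $\tau = 0$, but this is not enough to distinguish $\nu^+ = 0$ from $\nu^+ = 1$. The conclusion $V_0 = 0$ requires Tange's refinement: from Hedden's filtered chain homotopy type of the knot Floer complex of $D_+(K,n)$, built explicitly in terms of that of $K$, one checks that in the regime $n \geq 2\tau(K)$ the subcomplex $\mathfrak{A}^-_0(D_+(K,n))$ has trivial non-torsion part in positive Alexander grading. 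This forces $\nu^+ = 0$, hence $V_0 = 0$, and so $d(S^3_1) = 0$.

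For the $-1$-surgery statement, I use $d(S^3_{-1}(K'),\mft_0) = -d(S^3_{1}(\overline{K'}),\mft_0) = 2V_0(\overline{K'})$ together with the mirror identity $\overline{D_+(K,n)} = D_-(\overline{K},-n)$ and Hedden's computation of $\tau$ for negatively-clasped Whitehead doubles, which yields $\tau(D_-(\overline{K},-n)) = -\tau(D_+(K,n)) \in \{-1,0\}$. In both subcases the ambient $\tau$ is nonpositive, so the same structural analysis of the knot Floer complex (applied to the mirror) yields $V_0(D_-(\overline{K},-n)) = 0$, giving $d(S^3_{-1}(D_+(K,n)),\mft_0) = 0$ uniformly.

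The main obstacle is the borderline regime $\tau = 0$ (respectively $\tau \leq 0$ for the mirror). At that level of information $\nu^+$ could in principle equal $1$, and ruling this out cannot be done through a universal inequality valid for all genus-one knots: one genuinely needs Hedden's explicit filtered chain homotopy type of $CFK^\infty(D_\pm(K,n))$, from which the vanishing of the non-torsion part of $\mathfrak{A}^-_0$ in positive gradings can be read off. All other steps are routine manipulations with the Ni-Wu formula and the constraints coming from Corollary \ref{genus1knot}.
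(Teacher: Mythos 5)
The paper does not actually prove this proposition: it is imported verbatim from Tange \cite{Motoo}, and the only in-house content is the observation (Corollary \ref{genus1knot}) that the $d$-invariants are a priori confined to $\{0,-2\}$ and $\{0,2\}$. So there is no ``paper proof'' to match your argument against. Measured on its own terms, your proposal is a correct and clean derivation of exactly one of the three assertions: for $n<2\tau(K)$, the chain $1=\tau(D_+(K,n))\le\nu^+(D_+(K,n))\le g_4(D_+(K,n))\le 1$ forces $\nu^+=1$, hence $V_0\ge 1$, and together with the genus-one bound $V_0\le 1$ (equivalently Corollary \ref{genus1knot}) this gives $d(S^3_1(D_+(K,n)),\mft_0)=-2$. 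That part needs only Hedden's $\tau$ computation and the standard Hom--Wu inequalities, and it is more than the paper itself supplies.

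The remaining two thirds of the statement --- the regime $n\ge 2\tau(K)$ and the entire $-1$-surgery claim --- are not proved in your write-up; they are deferred to ``Tange's refinement'' and to ``the same structural analysis applied to the mirror.'' As you yourself observe, $\tau=0$ (resp.\ $\tau\le 0$ for the mirror $\overline{D_+(K,n)}=D_-(\overline{K},-n)$) is compatible with $\nu^+=1$, i.e.\ with $V_0=1$, so no universal inequality closes these cases; one must actually locate the generator of the $\F[[U]]$-free summand of $H_*(\A^{-}(D_\pm(K,n),0))$ in Maslov degree $0$ inside Hedden's explicit model of the knot Floer complex. That computation is precisely the content of the result being cited, so your proposal should be read as a correct reduction of the proposition to Tange's calculation rather than as an independent proof. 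A minor imprecision along the way: the condition you want is not that ``$\mathfrak{A}^-_0$ has trivial non-torsion part in positive Alexander grading'' (the Alexander grading plays no role here), but that the free part of $H_*(\A^-_0)$ is supported in homological degree $0$ rather than $-2$, i.e.\ $H(0)=V_0=0$ in the sense of Definition \ref{Hfunction}.
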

This calculation restates Hedden's criterion on the sliceness of $D_+(K, n)$ in terms of the $d$-invariant: if $n<2\tau(K)$, then $D_+(K, n)$ is not slice. 

\begin{example}
\label{bingdouble}
Let $B(K)$ be an untwisted Bing double of $K$. We label the component involving $K$ as $L_2$ and the other unknotted component as $L_1$. Then 
\[
	d(S^{3}_{1, 1}(B(K), \mft_0)=d(S^{3}_{1}(D_{+}(K, 0)), \mft_0).
\]

Since $B(K)$ is related to $D_+(K, 0)$ by a band move, when $B(K)$ is slice, this implies $D_+(K, 0)$ is slice. In particular, whenever $\tau(K)>0$, then $B(K)$ is not slice. A genera-minimizing pair of surfaces may be constructed as follows. Since both components $L_1$ and $L_2$ are unknots, they bound disks which intersect transversely at two points in $B^{4}$. Add a tube to cancel this pair of intersection points and increase the total genus by one. This illustrates that the bound given by Inequality \ref{d-gen-ineq} is sharp, since
\[
	2=-d(S^{3}_{1, 1}(B(K), \mft_0)=-d(S^{3}_{1}(D_{+}(K, 0)), \mft_0)\leq 2\lceil g_1/2\rceil+ 2\lceil g_2/2\rceil
\]
implies that $g_1+g_2 \geq 1$.
\end{example}

\begin{example}
\label{2bwhitehead}
Let $W$ denote the Whitehead link and $\L$ denote the 2-bridge link $b(8k ,4k+1)$ where $k\in \mathbb{N}$. By the work of Y. Liu \cite[Theorem 6.10]{LiuY1}, 
\[
	HF^{-}(S^{3}_{\pm 1, \pm 1}(\L))\cong HF^{-}(S^{3}_{\pm 1, \pm 1}(W))\oplus \F^{k-1}.
\]
Then the $d$-invariant $d(S^{3}_{(\pm 1, \pm 1)}(\L))$ is the same as the one for the Whitehead link. Hence  by \cite[Proposition 6.9]{LiuY1}, 
\[
	d(S^{3}_{1, 1}(\L), \mft_0)=d(S^{3}_{1, 1}(W), \mft_0)=-2.
\]
By Inequality \ref{inequlity 5}, we have 
\[
	\lceil g_1/2 \rceil +\lceil g_2/2 \rceil \geq 1.
\]
Observe that both the link components of $\L$ are unknots. Again we add a tube to eliminate the intersection, obtaining pairwise disjoint surfaces with total genus one. Hence $g_4(\L)= 1$, and the bound obtained by Inequality \ref{d-gen-ineq}  is sharp.
\end{example}

\subsection*{Acknowledgements}

We would like to thank Yajing Liu, Charles Livingston, Kyungbae Park, Mark Powell and Jacob Rasmussen for the useful discussions. We deeply appreciate the anonymous referee for a thorough and very helpful review and numerous suggestions. In particular, the simplified proof of Theorem \ref{thm:bbnd}  was suggested by the referee.
The work of E. G. and B. L. was partially supported by the NSF grant DMS-1700814. The work of A. M.  was partially supported by the NSF grant DMS-1716987.
E. G. was also supported by Russian Academic Excellence Project 5-100 and the grant RFBR-16-01-00409.

\bibliographystyle{alpha}
\bibliography{biblio}

\end{document}